\newtheorem{Theorem}{Theorem}[section]
\newtheorem{Lemma}{Lemma}[section]
\newtheorem{Proposition}{Proposition}[section]
\theoremstyle{definition}
\newtheorem{Definition}{Definition}[section]
\theoremstyle{remark}
\newtheorem{Remark}{Remark}[section]
\numberwithin{equation}{section}
\renewcommand{\r}{\rho}
\def\i{\varepsilon}
\renewcommand{\u}{{\bf u}}
\renewcommand{\H}{{\bf H}}
\newcommand{\R}{{\mathbb R}}
\newcommand{\Dv}{{\rm div}}
\newcommand{\tr}{{\rm tr}}
\def\f{\frac}
\def\D{\Delta }
\def\hf1{^\f{1}{1-\xi^2}}
\def\be{\begin{equation}}
\def\en{\end{equation}}
\def\bs{\begin{split}}
\def\es{\end{split}}
\newcommand{\F}{{\mathtt F}}
\begin{document}

\author{Xianpeng Hu}
\address{Courant Institute of Mathematical sciences, New York
University, New York, NY 10012, USA.}
\email{xianpeng@cims.nyu.edu}

\title[Compressible Magnetohydrodynamics]
{Global Existence for Two Dimensional Compressible
Magnetohydrodynamic Flows with Zero Magnetic Diffusivity}

\keywords{Compressible MHD, global existence,
two dimensions.}

\date{\today}
\thanks{Supported by NSF Grant DMS-1108647.}

\begin{abstract}
The existence of global-in-time classical solutions to the Cauchy
problem of compressible magnetohydrodynamic flows with zero
magnetic diffusivity is considered in two dimensions. The linear structure is a degenerated hyperbolic-parabolic system. The solution is constructed as
a small perturbation of a constant background in critical spaces. The deformation
gradient is introduced to decouple the subtle coupling
between the flow and the magnetic field. The $L^1$ dissipation for the velocity is obtained, and the $L^2$ dissipations for the density and the magnetic field are also achieved.
\end{abstract}

\maketitle

\section{Introduction}

This work is devoted to the solvability of magnetohydrodynamic flows (MHD) with zero magnetic diffusivity, and concentrates on the compressible model. The compressible model is more
complicated to analyze than the incompressible model in \cite{HW2} since, in addition to a degenerated parabolic-hyperbolic structure between the flow and the magnetic field, 
it involves another parabolic-hyperbolic structure between the flow and the density. Recall that the equations describing
two-dimensional compressible magnetohydrodynamic flows in the
barotropic case with zero magnetic diffusivity have the following
form (\cite{Ca,KL, KL1,LL}):
\begin{equation} \label{e1}
\begin{cases}
\partial_t\rho +\Dv(\r\u)=0, \\
\partial_t(\r\u)+\Dv\left(\r\u\otimes\u-{\bf B}\otimes {\bf B}\right)+\nabla
\left(P(\r)+\f{1}{2}|{\bf B}|^2\right)=\mu \Delta \u+(\lambda+\mu)\nabla\Dv\u, \\
\partial_t {\bf B}-\nabla\times(\u\times {\bf B})=0,\quad
\Dv {\bf B}=0,
\end{cases}
\end{equation}
where $\r$ denotes the density, $\u\in\R^2$ is
the velocity, ${\bf B}\in\R^2$ is the magnetic field, and $P(\r)$ is the pressure of the flow. The viscosity coefficients of
the flow are independent of the magnitude and
direction of the magnetic field and satisfy $$\mu>0\quad\textrm{and}\quad\mu+\lambda\ge 0$$ which ensures the operator $-\mu \Delta \u-(\lambda+\mu)\nabla\Dv\u$ is strongly elliptic and can be deduced directly
from the second law of thermodynamics. The symbol $\otimes$ denotes the
Kronecker tensor product. Generally speaking, the pressure $P(\r)$ is an increasing
and convex function of $\r$, but \textit{for simplicity of the presentation}, it will be assumed in this paper that $P(\r)=\f13\r^3$. Usually, we refer to the first equation
in \eqref{e1} as the continuity equation, and the second equation
as the momentum balance equation. It is well-known that the
electromagnetic fields are governed by the Maxwell's equations. In
magnetohydrodynamics, the displacement current  can be neglected
(\cite{KL,LL}). As a consequence, the last equation in \eqref{e1}
is called the induction equation, and the electric field can be
written in terms of the magnetic field ${\bf B}$ and the
velocity $\u$,
\begin{equation*}
\mathfrak{E}=-\u\times {\bf B}.
\end{equation*}
Although the electric field $\mathfrak{E}$ does not appear in
\eqref{e1}, it is indeed induced according to the above relation
by the moving conductive flow in the magnetic field.

The global wellposedness of \eqref{e1} is expected physically and was observed numerically, but the rigorous mathematical verification is widely open since the pioneering work of Hannes Alfv$\acute{e}$n, a Nobel laureate, in 1940s.
In this paper, we are interested in global classical solutions $(\r,\u, {\bf B})$ to \eqref{e1} which are small perturbations around an equilibrium $(1,0,h_0)$, where, up to a scaling and a rotation of Eulerian coordinates,
the constant vector $h_0$ is assumed to be $(1,0)^\top$ in two dimensional space $\R^2$ ($v^\top$ means the
transpose of $v$). More precisely, we define
$${\bf B}=h_0+\H=(1+\H_1, \H_2)^\top\quad\textrm{and}\quad \r=1+b,$$ and consider the global classical solutions of
\begin{equation} \label{e2}
\begin{cases}
\partial_t\rho +\Dv(\r\u)=0, \\
\partial_t(\r\u)+\Dv\left(\r\u\otimes\u-\H\otimes \H\right)-h_0\cdot\nabla\H+\nabla
\left(\f{1}{3}\r^3+\H_1+\f{1}{2}|\H|^2\right)\\
\qquad\qquad\qquad=\mu \Delta \u+(\lambda+\mu)\nabla\Dv\u, \\
\partial_t {\bf H}-\nabla\times(\u\times {\bf H})=\nabla\times(\u\times h_0),\quad
\Dv \H=0,
\end{cases}
\end{equation}
associated with the initial condition:
\begin{equation}\label{IC1}
(b, \u, \H)|_{t=0}=(b_0(x), \u_0(x), \H_0(x)), \quad x\in\R^2.
\end{equation}

The global wellposedness of \eqref{e1} is a widely open problem due to
the strong coupling between the fluid and the magnetic field. Indeed, the main difficulty in solving \eqref{e2}-\eqref{IC1} lies in the lack of the dissipation mechanisms of the density and the magnetic field, which is sharply 
different from the situation for the compressible Navier-Stokes equation. Recall that the dissipation of the density for compressible Navier-Stokes equations is the backbone of the wellposedness in \cite{RD}.   
While the dissipations for one partial derivative of the magnetic field $\partial_{x_1}\H$ and the total pressure $\nabla(b+\H_1)$ are relatively clear, 
the dissipation for the other derivative of the magnetic field is very complicated and subtle (see \cite{LP})
and it turns out that the dissipation in this direction is intrinsically
related to the flow. This inspires us to carefully analyze the linear structure of \eqref{e2}
\begin{subequations}\label{e3}
 \begin{align}
&\partial_tb+\Dv \u=0\\
&\partial_t\u-\mu\D\u-(\lambda+\mu)\nabla\Dv\u-\partial_{x_1}\H+\nabla(b+\H_1)=0\\
&\partial_t\H+h_0\Dv\u=\partial_{x_1}\u.
 \end{align}
\end{subequations}
Indeed, taking one more derivative gives
\begin{equation}\label{e3a}
\partial_{tt}\u-\mu\D\partial_t\u-(\lambda+\mu)\nabla\Dv\partial_t\u-Q(D)\u=0,
\end{equation}
where 
$$Q(D)\u=\nabla\Dv\u+(0,\D\u_2)$$ with
\begin{equation*}
Q(D)=\left(\begin{array}{ccc}\partial_{x_1}^2&\partial_{x_1}\partial_{x_2}\\
             \partial_{x_1}\partial_{x_2}&\D+\partial_{x_2}^2
            \end{array}\right)\quad\textrm{and the symbol}\quad Q(\xi)=\left(\begin{array}{ccc}-\xi^2_1&-\xi_1\xi_2\\
             -\xi_1\xi_2&-|\xi|^2-\xi_2^2
            \end{array}\right).
\end{equation*}
Observe that the equation \eqref{e3a} is a degenerated parabolic-hyperbolic system.
In particular whenever $\Dv\u=0$ the equation \eqref{e3a} becomes
$$\partial_{tt}\u-\mu\D\u-\partial_{x_1}^2\u=0,$$ which is the linear structure of the incompressible MHD (see \cite{HW2,XZ}). The differential structure of \eqref{e3a} makes it necessary to control the
competition between the parabolicity and the hyperbolicity.
To overcome this difficulty, it requires us to diagonalize the system and carefully analyze the linear structure. 

One of the main novelties of this paper is the dissipation mechanism of
the magnetic field $\H$ and the density $b$. In order to achieve this goal, we introduce the concept of the deformation gradient (see \cite{CD,HW, LLZ1, LLZ,LZ,QZ, ST, ST1}),
which is defined to be the gradient of the flow map with respect to the Lagrangian configuration. The key observation
of obtaining those dissipation mechanisms is that the magnetic field
is closely related to the inverse of the deformation
gradient $\F$ (see Proposition \ref{p11}). The relation between the
deformation gradient and the magnetic field can be interpreted as
a ``frozen'' law in MHD (cf. \cite{Ca, KL}). It turns out that some combination
between the deformation gradient and the magnetic field is
transported by the flow (see Proposition \ref{p11}). A direct
consequence of this ``frozen'' law and properties of the
deformation gradient is that one can decouple the relation between
the density and the magnetic field (see subsection 2.1 below for details). From this viewpoint, the deformation
gradient serves like a bridge to connect the flow and the magnetic
field. 

Whenever the deformation gradient is involved, its $L^\infty$ norm needs to be under control. This intrigues a difficulty since the deformation gradient is not explicitly expressed in \eqref{e1}. To overcome this difficulty,
one way is to control $L^\infty$ norm of $\nabla\u$, since the deformation gradient satisfies 
a transport equation from its definition. For Cauchy problems, the necessarity of $L^\infty$ norms for $\nabla\u$ motives us to work on Besov spaces of functions, since $\dot{B}_{2,1}^1(\R^2)\subset L^\infty.$ This
actually coincides with the so-called \textit{Critical Spaces} of system \eqref{e1} (see \cite{RD,HW,QZ} for instance).  
Motivated by the work \cite{RD}, it is natural to handle the low and high frequence with different regularities for the density and the magnetic field. 
This consideration naturally involves the so-called \textit{hybrid Besov space} $\tilde{B}^{s,t}$ (see Definition in Section 2).  
In summary, if one is interested in the global existence of \eqref{e1} with as low as possible regularity, it seems necessary to work in the framework of Besov functional space because:
\begin{itemize}
\item The system \eqref{e1} is scaling invariant in Besov spaces $(\r, \u,{\bf B})\in \dot{B}_{2,1}^{1}\times\dot{B}^0_{2,1}\times\dot{B}_{2,1}^{1}$ with the embedding $ \dot{B}^1_{2,1}\subset L^\infty$ in $\R^2$;
\item Due to the weak dissipation mechanism for the magnetic field in the direction parallel to the background, 
the $L^1$ integrability of $\|\nabla\u\|_{L^\infty}$ is necessary and which is generally obtained by using Besov spaces for Cauchy problems.
\end{itemize}

When the density is a constant, system \eqref{e1} takes the form
of incompressible MHD with zero magnetic diffusivity. For the
incompressible version of \eqref{e1}, authors in \cite{XZ, XZ1}
proposed a global existence near the equilibrium $(\u,\H)=(0,h_0)$
in Lagrangian coordinates in anisotropic Besov spaces. Recently authors in \cite{HW2} verified the global wellposedness in Eulerean coordinates in the framework of \textit{hybrid Besov spaces}. As
the viscosities $\mu$ and $\lambda$ vanish further, system
\eqref{e1} becomes two dimensional ideal incompressible MHD.
Authors in \cite{MZ, ZF} showed a local existence for the ideal
incompressible MHD (see also \cite{CG, Yu} for problems of
current-vortex sheets). As far as the criteria for blow-up in
finite time for system \eqref{e1} is concerned, we refer the
interested reader to \cite{LMZ}. For the MHD with partial
dissipation for the magnetic field, authors in \cite{CW} showed a
global existence in two dimension without the smallness
assumption. For one dimensional ideal compressible MHD, we refer
the interested reader to \cite{gw, gw2, HT, KS, sm, w1}. On the
other hand, as the magnetic diffusivity presents, global weak solutions
have been constructed in \cite{f3, HW1, MR}.

This paper is organized as follows. In Section 2, we will state
our main result and explain the strategy of proof, including the
dissipation mechanism for perturbations. In Section 3 \textit{hybrid Besov spaces} are introduced and basic properties of these spaces are analyzed. Section 4 and Section 5 are devoted
to uniform dissipation estimates for perturbations, while we will
finish the proof of our main result in Section 6. Throughout this
paper, $A_i$ means the $i-$th component (column respectively) of a
vector (matrix respectively) $A$. The notation $(\cdot|\cdot)$ stands for the standard inner product in Lebesgue space $L^2$.

%%%%%%%%%%%%%%%%%%%%%%%%%%%%%%%%%%%%%%%%%%%%%%%%%%%%%%%%%%%%%%%%%%%%%%%%%%%%%%%%%%%%%%%%%

\bigskip

\section{Main Results and Strategy of Proof}

In this section, we state our main result and explain the strategy
of proof. For simplicity of presentation, we focus on the case: $\mu=1$ and $\lambda=-1$. The general case can be handled similarly since $-\mu\D-(\lambda+\mu)\nabla\Dv\u$ is a strongly elliptic operator. 
First we take a look at the linearized
structure of the momentum equation in \eqref{e1}. Indeed, using
the continuity equation, the second equation in \eqref{e2} can be
rewritten as
\begin{equation}\label{11}
 \partial_t\u-h_0\cdot\nabla\H-\D\u+\nabla\left(b+\H_1\right)=-\u\cdot\nabla\u+\mathcal{L}
\end{equation}
with
\begin{equation*}
 \begin{split}
\mathcal{L}&=\left(\f{1}{\r}-1\right)\Big[h_0\cdot\nabla\H+\D\u-\nabla\H_1\Big]+b\nabla b+\f{1}{\r}\left(\H\cdot\nabla\H-\f12\nabla|\H|^2\right).
 \end{split}
\end{equation*}
Define
$$\Lambda^s=\mathcal{F}^{-1}(|\xi|^s\mathcal{F}(f)),$$
where $\mathcal{F}$ denotes the Fourier transformation, and
denote
$$d=\Lambda^{-1}\Dv\u\quad\textrm{and}\quad \omega=\Lambda^{-1}\textrm{curl}\u$$
with $\textrm{curl}\u=\partial_{x_2}\u_1-\partial_{x_1}\u_2$. 
Applying operators $\Lambda^{-1}\Dv$ and $\Lambda^{-1}\textrm{curl}$ to \eqref{11} respectively yields
\begin{equation}\label{12}
\partial_t d-\D d-\Lambda(b+\H_1)=-\Lambda^{-1}\Dv(\u\cdot\nabla\u-\mathcal{L})
\end{equation}
and
\begin{equation}\label{13}
\partial_t\omega-\D\omega-\Lambda\H_2=-\Lambda^{-1}\textrm{curl}(\u\cdot\nabla\u-\mathcal{L}).
\end{equation}
Here for \eqref{12} and \eqref{13}, we used $\Dv\H=0$ and
\begin{equation*}
 \begin{split}
\Lambda^{-1}\textrm{curl}(h_0\cdot\nabla\H)&=\Lambda^{-1}\left(\f{\partial^2 \H_1}{\partial x_1x_2}-\f{\partial^2\H_2}{\partial x_1^2}\right)\\
&=-\Lambda^{-1}\left(\f{\partial^2\H_2}{\partial x_1^2}+\f{\partial^2 \H_2}{\partial x_2^2}\right)\\
&=\Lambda\H_2.
 \end{split}
\end{equation*}
Clearly \eqref{13} implies the dissipations of $\H_2$ and $\omega$ (or equivalently of $\partial_{x_1}\H$ and $\omega$ due to $\Dv\H=0$); while \eqref{12} yields the dissipation for 
$b+\H_1$. 

\subsection{Dissipations of the density and the magnetic filed}
The goal of this subsection is to obtain the dissipations of the density and the magnetic field $\H_1$ from
\eqref{11}. To this end, we need to probe the relation between the
flow and the magnetic field.

Let us define the flow map $x(t,\alpha)$
associated to the velocity $\u$ as
\begin{equation}\label{lang}
\f{dx(t,\alpha)}{dt}=\u(t, x(t,\alpha))\quad
\textrm{with}\quad x(0)=\alpha.
\end{equation}
We introduce the deformation gradient $\F\in M^{2\times 2}$
($M^{2\times 2}$ denotes the set of all $2\times 2$ matrices with
positive determinants) as (see \cite{CD, LLZ1, LLZ, LZ} and references therein)
$$\F(t,x(t,\alpha))=\f{\partial x(t,\alpha)}{\partial\alpha}.$$
From the chain rule, it follows that $\F$ satisfies a transport
equation in the Eulerian coordinate
\begin{equation}\label{f}
\partial_t\F+\u\cdot\nabla\F=\nabla\u\F.
\end{equation}
Denote by $A$ and $J$ the inverse and the determinant of $\F=\nabla_\alpha x$ respectively; that is
$$A=\F^{-1}\quad\textrm{and}\quad J=\det\F.$$
Since $A\F=I$, differentiating $A$ gives
\begin{equation}\label{17a}
 \f{D}{Dt} A=-A\nabla_\alpha \u A\quad\textrm{and}\quad \partial_{\alpha_i}A=-A\nabla_\alpha\partial_{\alpha_i}x A,
\end{equation}
where $\f{D}{Dt}$ stands for the material derivative.
Differentiating $J$ gives
\begin{equation}\label{17b}
 \f{D}{Dt}J=J\tr(A\nabla_\alpha\u)\quad\textrm{and}\quad \partial_{\alpha_i}J=J\tr(A\nabla_\alpha\partial_{\alpha_i}x).
\end{equation}

The continuity equation equals
$$\partial_t\r+\u\cdot\nabla\r+\r\Dv\u=0,$$
and in Lagrangian coordinates, it reads
\begin{equation*}
 \f{D}{Dt}\r(t, x(t,\alpha))+\r(t,x(t,\alpha))\nabla_\alpha\u(t,x(t,\alpha)):A^\top=0.
\end{equation*}
This, together with \eqref{17b}, yields
\begin{equation*}
\r J=\r_0 J_0.
\end{equation*}
For simplicity of the presentation, we assume from now on that
\begin{equation}\label{19}
\r_0 J_0=1, \quad\textrm{and hence} \quad \r J=1 \quad\textrm{for all time}. 
\end{equation}

The magnetic field is incorporated into the flow through the deformation gradient as follows.
\begin{Proposition}\label{p11}
Assume that $(\r,\u, {\bf B})$ is a solution of \eqref{e1} and $\F$ satisfies the equation \eqref{f}. Then one has the relation
\begin{equation}\label{bf}
\r^{-1}A{\bf B}(t)=\r_0^{-1}A_0{\bf B_0}\quad \textrm{for all}\quad t\ge 0.
\end{equation}
\end{Proposition}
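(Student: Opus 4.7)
The plan is to show that the vector-valued quantity $\rho^{-1}A\mathbf{B}$ is transported along particle trajectories, i.e., that its material derivative vanishes. Once this is established, Proposition \ref{p11} follows immediately because $x(0,\alpha)=\alpha$, $\mathbf{F}(0)=I$ and hence $A(0)=I$ (no — more generally, whatever the initial data is, transport yields the claim).

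First I would rewrite the induction equation in a usable form. Expanding $\nabla\times(\mathbf{u}\times\mathbf{B})$ by the standard vector identity and using the constraint $\Dv\mathbf{B}=0$, the last equation of \eqref{e1} becomes
\begin{equation*}
\frac{D\mathbf{B}}{Dt}=(\mathbf{B}\cdot\nabla)\mathbf{u}-\mathbf{B}\,\Dv\mathbf{u}.
\end{equation*}
Combining with the continuity equation $\frac{D\rho}{Dt}=-\rho\,\Dv\mathbf{u}$, one obtains the compressible analogue of the Cauchy ``frozen-in'' formula:
\begin{equation*}
\frac{D}{Dt}\!\left(\frac{\mathbf{B}}{\rho}\right)=\frac{1}{\rho}\bigl[(\mathbf{B}\cdot\nabla)\mathbf{u}-\mathbf{B}\,\Dv\mathbf{u}\bigr]+\frac{\mathbf{B}}{\rho}\Dv\mathbf{u}=\left(\frac{\mathbf{B}}{\rho}\cdot\nabla\right)\mathbf{u}.
\end{equation*}
The divergence terms cancel exactly, leaving only the stretching term — this is the key point that makes $\mathbf{B}/\rho$ behave like a material line element.

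Next I would couple this with the evolution of the inverse deformation gradient $A$. Formula \eqref{17a} gives $\frac{DA}{Dt}=-A\nabla_\alpha\mathbf{u}\,A=-A\nabla\mathbf{u}$, where the second equality uses $\nabla_\alpha\mathbf{u}=\nabla\mathbf{u}\cdot\mathbf{F}$ and $\mathbf{F}A=I$. Using the product rule,
\begin{equation*}
\frac{D}{Dt}\!\left(\frac{A\mathbf{B}}{\rho}\right)=\frac{DA}{Dt}\cdot\frac{\mathbf{B}}{\rho}+A\cdot\frac{D}{Dt}\!\left(\frac{\mathbf{B}}{\rho}\right)=-A\,\nabla\mathbf{u}\cdot\frac{\mathbf{B}}{\rho}+A\!\left(\frac{\mathbf{B}}{\rho}\cdot\nabla\mathbf{u}\right).
\end{equation*}
Writing both terms in index form with the convention $(\nabla\mathbf{u})_{ij}=\partial_j u_i$ and relabeling the summation indices in the first term shows that the two contributions are identical in magnitude but opposite in sign, so $\frac{D}{Dt}(\rho^{-1}A\mathbf{B})=0$.

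The final step is to integrate along characteristics: since the material derivative of $\rho^{-1}A\mathbf{B}$ vanishes, the value of this quantity at $(t,x(t,\alpha))$ coincides with its value at $(0,\alpha)$, which is exactly \eqref{bf}. The only delicate point is the index bookkeeping in the cancellation of Step 4 — one must correctly match the ``stretching'' direction of $(\mathbf{B}/\rho)\cdot\nabla\mathbf{u}$ with the ``deformation'' action of $-A\nabla\mathbf{u}$. Once the two conventions are kept consistent, the algebraic cancellation is immediate, and no analytic estimates are needed.
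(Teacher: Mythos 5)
Your proposal is correct and follows essentially the same route as the paper: both proofs combine the transport equation $\tfrac{D}{Dt}A=-A\nabla\u$ for the inverse deformation gradient with the compressible frozen-in law $\tfrac{D}{Dt}(\mathbf{B}/\r)=(\mathbf{B}/\r)\cdot\nabla\u$ (which the paper states directly from the continuity and induction equations and you derive explicitly) to conclude that $\r^{-1}A\mathbf{B}$ has vanishing material derivative, and then integrate along characteristics. The index cancellation you flag as delicate does check out with the convention $(\nabla\u)_{ij}=\partial_j u_i$, since $(A\nabla\u)\,v=A\bigl((v\cdot\nabla)\u\bigr)$ for any vector $v$.
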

\begin{proof}
In Eulerian coordinates, \eqref{17a} is interpreted as
\begin{equation}\label{17d}
 \partial_t A+\u\cdot\nabla A+A\nabla\u=0.
\end{equation}
On the other hand, from the first and third equation of \eqref{e1}, one has
\begin{equation}\label{17e}
 \partial_t\left(\f{{\bf B}}{\r}\right)+\u\cdot\nabla\left(\f{{\bf B}}{\r}\right)=\left(\f{{\bf B}}{\r}\right)\cdot\nabla\u,
\end{equation}
which is refered as a ``frozen'' law of MHD in literatures (for example \cite{Ca,KL}).
Therefore one deduces from \eqref{17d} and \eqref{17e} that
$$\partial_t\Big(\r^{-1}A{\bf B}\Big)+\u\cdot\nabla\Big(\r^{-1}A{\bf B}\Big)=0,$$
and the desired identity \eqref{bf} follows.
\end{proof}

A direct consequence of Proposition \ref{p11} is that along the flow map, the quantity $\r^{-1}A{\bf B}$ is a constant. From now on, we assume that
\begin{equation}\label{A1}
\r^{-1}_0A_0{\bf B_0}=h_0.
\end{equation}
Therefore, it follows from Proposition \ref{p11} that for all $t>0$,
\begin{equation}\label{A11}
\r^{-1} A{\bf B}(t)=h_0.
\end{equation}
Keeping in mind that $A\F=I$ and \eqref{19}, one has
$$A_{ij}=\f{\partial \alpha_i}{\partial x_j}\quad \textrm{and}\quad \F=A^{-1}=\det \F\left[\begin{array}{ccc} A_{22},\quad -A_{12}\\ -A_{21},\quad A_{11}\end{array}\right]$$
Multiplying the identity \eqref{A11} by $\r\F$ implies
\begin{equation}\label{16}
\begin{split}
{\bf{B}}(t)=\r\F(x,t)h_0=\left(\begin{array}{ccc}A_{22}\\-A_{21}\end{array}\right).
\end{split}
\end{equation}
Introduce the perturbation of $A$ as
$$\mathcal{A}=A-I.$$
Components of \eqref{16} gives
\begin{equation}\label{16a}
\H_1=\mathcal{A}_{22}\quad\textrm{and}\quad \H_2=-\mathcal{A}_{21}. 
\end{equation}

Let us now explain the idea to obtain the dissipation for the magnetic field. Indeed the incompressibility $\Dv\H=0$ and the disspation of $\H_2$ imply the dissipation for $\partial_{x_1}\H$. On the other hand,
it holds, using \eqref{16a} and integration by parts
\begin{equation}\label{17}
 \begin{split}
(\partial_{x_1}\H|\Dv \mathcal{A})&=\sum_{j=1}^2\left(\f{\partial^2\alpha_2}{\partial x_1\partial x_2}|\f{\partial^2 \alpha_1}{\partial x_j^2}\right)
-\sum_{j=1}^2\left(\f{\partial^2\alpha_2}{\partial x_1^2}|\f{\partial^2 \alpha_2}{\partial x_j^2}\right)\\
&=-\sum_{j=1}^2\left\|\f{\partial^2\alpha_2}{\partial x_2\partial x_j}\right\|_{L^2}^2-\sum_{j=1}^2\left\|\f{\partial^2\alpha_2}{\partial x_1\partial_{x_j}}\right\|_{L^2}^2
+\sum_{i,j=1}^2\left(\f{\partial^2\alpha_i}{\partial x_i\partial x_j}|\f{\partial^2 \alpha_2}{\partial x_2\partial x_j}\right)\\
&=-\|\nabla\H\|_{L^2}^2+\sum_{i,j=1}^2\left(\f{\partial^2\alpha_i}{\partial x_i\partial x_j}|\f{\partial^2 \alpha_2}{\partial x_2\partial x_j}\right).
 \end{split}
\end{equation}
To control the last term above, we note that the identity \eqref{19} implies $\r=\det A$ and hence
\begin{equation}\label{16b}
b=\tr\mathcal{A}+\det\mathcal{A}\quad\textrm{or equivalently}\quad \tr\mathcal{A}=b-\det\mathcal{A}.
\end{equation}
Thus one has
\begin{equation*}
 \begin{split}
\sum_{i,j=1}^2\left(\f{\partial^2\alpha_i}{\partial x_i\partial x_j}|\f{\partial^2 \alpha_2}{\partial x_2\partial x_j}\right)=
\sum_{j=1}^2\left(\f{\partial b}{\partial x_j}|\f{\partial^2 \alpha_2}{\partial x_2\partial x_j}\right)
-\sum_{j=1}^2\left(\f{\partial\det\mathcal{A}}{\partial x_j}|\f{\partial^2 \alpha_2}{\partial x_2\partial x_j}\right).
 \end{split}
\end{equation*}
While the second term in the right hand side above is a cubic term essentially, the first term in the right hand side is quadratic and hence needs to deal with it more carefully. Actually there are two cases for the
quadratic term
\begin{itemize}
 \item $\left(\f{\partial b}{\partial x_1}|\f{\partial^2 \alpha_2}{\partial x_2\partial x_1}\right)=\left(\f{\partial (b+\H_1)}{\partial x_1}|\f{\partial^2 \alpha_2}{\partial x_2\partial x_1}\right)
-\left\|\f{\partial^2 \alpha_2}{\partial x_2\partial x_1}\right\|_{L^2}^2=\left(\f{\partial (b+\H_1)}{\partial x_1}|\f{\partial^2 \alpha_2}{\partial x_2\partial x_1}\right)-\|\partial_{x_1}\H_1\|_{L^2}^2$; and
\item $\left(\f{\partial b}{\partial x_2}|\f{\partial^2 \alpha_2}{\partial x_2\partial x_2}\right)=\left(\f{\partial (b+\H_1)}{\partial x_2}|\f{\partial^2 \alpha_2}{\partial x_2\partial x_2}\right)
-\left\|\f{\partial^2 \alpha_2}{\partial x^2_2}\right\|_{L^2}^2=\left(\f{\partial (b+\H_1)}{\partial x_2}|\f{\partial^2 \alpha_2}{\partial x_2\partial x_2}\right)-\|\partial_{x_2}\H_1\|_{L^2}^2$.
\end{itemize}
Note that the dissipation for $\nabla(b+\H_1)$ is clear from \eqref{12}. Thus substituting those two identities into \eqref{17} yields
\begin{equation}\label{18a}
 \begin{split}
\|\nabla\H\|_{L^2}^2+\left\|\nabla\H_1\right\|_{L^2}^2&=-(\partial_{x_1}\H|\Dv \mathcal{A})+\sum_{i=1}^2\left(\f{\partial (b+\H_1)}{\partial x_i}|\f{\partial^2 \alpha_2}{\partial x_2\partial x_i}\right)\\
&\quad-\sum_{j=1}^2\left(\f{\partial\det\mathcal{A}}{\partial x_j}|\f{\partial^2 \alpha_2}{\partial x_2\partial x_j}\right).
 \end{split}
\end{equation}

On the other hand, integration by parts gives
\begin{equation}\label{18b}
 \begin{split}
(\nabla(b+\H_1)|\Dv \mathcal{A})&=-(b+\H_1|\D \tr\mathcal{A})=-(b+\H_1|\D(b-\det\mathcal{A}))\\
&=\|\nabla(b+\H_1)\|^2_{L^2}-(\nabla(b+\H_1)|\nabla\H_1)-(\nabla(b+\H_1)|\nabla\det\mathcal{A}).
 \end{split}
\end{equation}
Adding \eqref{18a} and \eqref{18b} together yields
\begin{equation}\label{18}
 \begin{split}
\|\nabla\H\|_{L^2}^2+\|\nabla b\|_{L^2}^2=(\nabla(b+\H_1)-\partial_{x_1}\H|\Dv \mathcal{A})+(\nabla b|\nabla\det\mathcal{A}).
 \end{split}
\end{equation}
All terms in the right hand side contain at least one term with $L^1$ dissipation in time, provided that one can obtain the dissipation for $\det\mathcal{A}$, which is a quadratic term and contains $\H$. 
This in turns gives a $L^2$ dissipation in time for $\H$ and $b$, and hence a $L^1$ dissipation for nonlinear terms such as $|\H_1|^2$ and $b^2$.

\begin{Remark}
 The reason why we prefer to using $A$ instead of the deformation gradient $\F$ itself lies in the fact that $A$ is actually a gradient in Eulerian coordinates, and hence taking spatial derivatives of $A$ will not involve 
change of variables.  
\end{Remark}

\subsection{Spectrum Analysis.} Recall that the linearization of \eqref{e1} takes the form \eqref{e3a}
\begin{equation*}
\partial_{tt}\u-\D\u-Q(D)\u=0,
\end{equation*}
with the characteristic equation
\begin{equation}\label{81a1}
\eta^2I+|\xi|^2\eta I-Q(\xi)=0\quad \textrm{where}\quad \textrm{$I$ is the $2\times 2$ identity matrix.}
\end{equation}
To control the competition between the parabolicity and hyperbolicity of \eqref{81a1}, one needs to analyze the positive definiteness of the symmetric matrix $-Q(\xi)$. The matrix $-Q(\xi)$ has two eigenvalues
$$\lambda_\pm=|\xi|^2\left(1\pm\sqrt{1-\left(\f{\xi_1}{|\xi|}\right)^2}\right),$$ and the equation \eqref{81a1} has the discriminant
$$|\xi|^4I+4Q(\xi).$$ 
Eigenvalues $\lambda_\pm$ are called fast and slow magneto-acoustic wave speed (cf \cite{Ca, KL}) respectively.
Note that whenever the discriminant is positive definite the equation \eqref{81a1} (and \eqref{e3a}) behaves like the hyperbolic structure, 
and if the discriminant is negative definite the equation \eqref{81a1} behaves like a parabolic
system. The worst situation is the case when the discriminant is neither positive nor negative definite; and under this situation one component of the system \eqref{81a1} has the parabolic feature and the other component 
has the hyperbolic structure. To handle the complexity of \eqref{81a1}, it will be convenient to diagonalize $-Q(\xi)$ (and hence \eqref{81a1}).

Since $-Q(\xi)$ is symmetric, there exists an orthonormal matrix $\mathcal{P}(\xi)$ such that
$$-\mathcal{P}^\top(\xi) Q(\xi)\mathcal{P}(\xi)=\textrm{diag}(\lambda_{-},\lambda_{+}).$$ A direct computation, combining with Taylor's expansion
$$\sqrt{1+x}=1+\f12x+\sum_{n=2}^\infty\left(\begin{array}{ccc}1/2\\ n\end{array}\right)x^n\quad\textrm{as}\quad |x|<1,$$ gives
\begin{equation*}
 \begin{split}
\mathcal{P}(\xi)&=\f{1}{\sqrt{\xi_2^2+\xi_1^2\Big(\f12+r(\xi)\Big)^2}}\left(\begin{array}{ccc}-\xi_2&\xi_1\Big(\f12+r(\xi)\Big)\\\xi_1\Big(\f12+r(\xi)\Big)&\xi_2\end{array}\right)
 \end{split}
\end{equation*}
with $$r(\xi)=\sum_{n=2}^\infty\left(\begin{array}{ccc}1/2\\ n\end{array}\right)(-1)^n\left(\f{\xi_1^{2}}{|\xi|^{2}}\right)^{n-1}.$$ Observe that $\mathcal{P}(\xi)=\mathcal{P}(\xi)^\top$ and $\mathcal{P}(\xi)^2=I$.
Denote by $\mathcal{P}, \lambda_{\pm}(D)$ the differential operators with the symbols $\mathcal{P}(\xi), \lambda_{\pm}(\xi)$ respectively.

Note that \eqref{16a} and \eqref{16b} imply
\begin{equation*}
 \begin{split}
\nabla(b+\H_1)-\partial_{x_1}\H=Q(D)\alpha+\nabla\det\mathcal{A},
 \end{split}
\end{equation*}
and hence
\begin{equation}\label{16d}
 \begin{split}
\mathcal{P}\Big(\nabla(b+\H_1)-\partial_{x_1}\H\Big)=-\textrm{diag}(\lambda_{-}(D), \lambda_{+}(D))\mathcal{P}\alpha+\mathcal{P}\nabla\det\mathcal{A}.
 \end{split}
\end{equation}
From now on, we denote
\begin{equation*}
\mathcal{H}=\mathcal{P}\Big(\nabla(b+\H_1)-\partial_{x_1}\H\Big). 
\end{equation*}

\subsection{Main results}
Let us first introduce the functional space which appears in the
global existence theorem. Let $T>0$, $r\in[0,\infty]$ and $X$ be a Banach space. We denote by
$\mathcal{M}(0,T;X)$ the set of measurable functions on $(0,T)$
valued in $X$. For $f\in \mathcal{M}(0,T; X)$, we define
\begin{subequations}
 \begin{align}
&\|f\|_{L^r_T(X)}=\left(\int_0^T\|f(\tau)\|_X^rd\tau\right)^{\f{1}{r}}\quad\textrm{
if }\quad r<\infty,\nonumber\\
&\|f\|_{L^\infty_T(X)}=\sup \textrm{ess}_{\tau\in(0,T)}\|f(\tau)\|_X. \nonumber 
 \end{align}
\end{subequations}
Denote $L^r(0,T;X)=\{f\in
\mathcal{M}(0,T;X)|\|f\|_{L^r_T(X)}<\infty\}$. If $T=\infty$, we
denote by $L^r(\R^+; X)$ and $\|f\|_{L^r(X)}$ the corresponding
spaces and norms. Also denote by $C([0,T],X)$ (or $C(\R^+,X)$) the
set of continuous X-valued functions on $[0,T]$ (resp. $\R^+$).

Now we are ready to state our main theorem. 
\begin{Theorem}\label{MT}
There exist two positive constants $\gamma$ and $\Gamma$,
such that, if $\u_0\in
\hat{B}^{0}$, $\lambda_{-}^{-\f12}(D)\mathcal{H}(0)_1\in\tilde{B}^{0,1}$, $\lambda_{+}^{-\f12}(D)\mathcal{H}(0)_2\in\hat{B}^{0}\cap\hat{B}^1$,  and $\mathcal{A}_0\in\hat{B}^{1}$
satisfy
\begin{itemize}
\item $\|\u_0\|_{\hat{B}^{0}}+\left\|\lambda_{-}^{-\f12}(D)\mathcal{H}(0)_1\right\|_{\tilde{B}^{0,1}}+\left\|\lambda_{+}^{-\f12}(D)\mathcal{H}(0)_2\right\|_{\hat{B}^{0}\cap\hat{B}^1}+\|\mathcal{A}_0\|_{\hat{B}^{1}}\le
\gamma$ for a sufficiently small $\gamma$;
\item $\r_0$, ${\bf{B}_0}$, and $\F_0$ satisfies conditions \eqref{19} and \eqref{A1},
\end{itemize}
then Cauchy problem \eqref{e1} with initial data \eqref{IC1} has a unique global solution $(\r,\u,{\bf B})$ which
satisfies the following estimate
\begin{equation}\label{ES}
\begin{split}
 &\|\mathcal{A}\|_{L^\infty(\hat{B}^{1})}+\|\u\|_{L^\infty(\hat{B}^{0})}+\left\|\lambda_{-}^{-\f12}(D)\mathcal{H}_1\right\|_{L^\infty(\tilde{B}^{0,1})}
+\left\|\lambda_{+}^{-\f12}(D)\mathcal{H}_2\right\|_{L^\infty(\hat{B}^{0}\cap\hat{B}^1)}\\
&\qquad+\|\u\|_{L^1(\hat{B}^{2})}+\|b\|_{L^2(\hat{B}^{1})}+\|\H\|_{L^2(\hat{B}^{1})}\\
&\quad\le
\Gamma\left(\|\mathcal{A}_0\|_{L^\infty(\hat{B}^{1})}+\|\u_0\|_{\hat{B}^{0}}+\left\|\lambda_{-}^{-\f12}(D)\mathcal{H}(0)_1\right\|_{\tilde{B}^{0,1}}+\left\|\lambda_{+}^{-\f12}(D)\mathcal{H}(0)_2\right\|_{\hat{B}^{0}\cap\hat{B}^1}
\right).
\end{split}
\end{equation}
\end{Theorem}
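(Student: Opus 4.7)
The plan is to establish \eqref{ES} as an a priori bound via a continuation argument, combined with a short-time existence construction in the same functional setting. Local existence follows from a Friedrichs-type regularization of \eqref{e1}; after substituting $\H=(\mathcal{A}_{22},-\mathcal{A}_{21})^\top$ and $b=\tr\mathcal{A}+\det\mathcal{A}$ from \eqref{16a}--\eqref{16b}, one obtains a self-contained system in $(\u,\mathcal{A})$ whose momentum equation is parabolic, yielding a solution in the Besov framework on some maximal interval $[0,T^*)$. The bootstrap then reduces to showing that, for $\gamma$ sufficiently small, the norm on the left-hand side of \eqref{ES} cannot exceed $2\Gamma\gamma$ on $[0,T^*)$, which forces $T^*=\infty$ by the standard continuation criterion.

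The dissipative estimates are derived block-by-block using a Littlewood--Paley decomposition. Acting on \eqref{11} by $\Lambda^{-1}\Dv$ and $\Lambda^{-1}\Cu$, and then by the diagonalizer $\mathcal{P}$ of Section 2.2, one finds that $\u$ obeys a parabolic equation driven by $\mathcal{H}$, while $(\mathcal{H}_1,\mathcal{H}_2)$ satisfies a damped-wave system with speeds $\sqrt{\lambda_\pm(D)}$. Testing the localized equations suitably yields, in turn: an $L^\infty_t(\hat{B}^0)\cap L^1_t(\hat{B}^2)$ bound for $\u$ from the parabolic structure of \eqref{11}; an $L^\infty_t(\tilde{B}^{0,1})$ bound for $\lambda_-^{-\frac12}(D)\mathcal{H}_1$ from the slow-wave equation, in which the weight $\lambda_-^{-\frac12}$ compensates the vanishing of $\lambda_-$ as $\xi_1\to 0$ and the hybrid scale $\tilde{B}^{0,1}$ reflects the switch between a parabolic regime at low frequencies and a hyperbolic regime at high frequencies; an $L^\infty_t(\hat{B}^0\cap\hat{B}^1)$ bound for $\lambda_+^{-\frac12}(D)\mathcal{H}_2$ from the fast-wave equation; and finally an $L^\infty_t(\hat{B}^1)$ bound for $\mathcal{A}$ obtained by applying the standard Besov transport estimate to \eqref{17d}, using the embedding $\hat{B}^1\hookrightarrow L^\infty$ in two dimensions together with the integrability of $\|\nabla\u\|_{L^\infty}$ furnished by $\u\in L^1_t(\hat{B}^2)$.

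The remaining, and most subtle, piece is the $L^2$ in time dissipation of $b$ and $\H$: this is obtained by applying $\Delta_q$ to the identity \eqref{18}, taking square roots, multiplying by $2^q$, and summing over $q\in\mathbb{Z}$. Cauchy--Schwarz, combined with the already-established bound on $\mathcal{H}$, controls the quadratic coupling $(\nabla(b+\H_1)-\partial_{x_1}\H\,|\,\Dv\mathcal{A})$; paraproduct estimates exploiting the smallness of $\mathcal{A}$ in $\hat{B}^1$ absorb the cubic contribution $(\nabla b\,|\,\nabla\det\mathcal{A})$. All other nonlinear sources on the right-hand sides of \eqref{11} and of the equations for $\mathcal{H}_j$ are at least quadratic in the small quantities; by routine product and commutator estimates in hybrid Besov spaces, and again by $\hat{B}^1\hookrightarrow L^\infty$ together with $\u\in L^1_t(\hat{B}^2)$, each such term is of order $\gamma^2$ in the norm of \eqref{ES} and so is absorbed into the left-hand side, closing the bootstrap. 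Uniqueness then follows from a standard lower-regularity energy estimate for the difference of two solutions, in the spirit of the scheme in \cite{RD}.

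The principal obstacle, absent from the compressible Navier--Stokes analysis of \cite{RD}, is the degeneracy of the slow magneto-acoustic speed $\lambda_-(\xi)$ as $\xi_1\to 0$: no dissipation propagates along the background magnetic field $h_0$, so one cannot close an energy estimate for $\H$ from the linearized system alone. This forces the simultaneous use of (i) the reweighting $\lambda_-^{-\frac12}(D)$ to isolate the degenerate spectral variable, (ii) the hybrid space $\tilde{B}^{0,1}$ to patch the parabolic low-frequency and hyperbolic high-frequency regimes of the slow wave, and (iii) the algebraic identity \eqref{18}, which is the very reason for introducing the deformation gradient and its inverse $\mathcal{A}$, in order to recover the missing dissipation of $\H$ and $b$ through the coupling to $\mathcal{A}$. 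Verifying that these three ingredients interlock consistently at the dyadic level is the technical heart of the argument.
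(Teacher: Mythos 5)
Your proposal is correct and follows essentially the same route as the paper: diagonalize the linearized system with $\mathcal{P}$, run coupled hypocoercivity-type energy estimates on the dyadic blocks of $(\mathcal{P}\u,\mathcal{H})$ with the weights $\lambda_{\pm}^{-\f12}(D)$ and the frequency splittings $k+1\gtrless 2q$ and $q\gtrless 1$, bound $\mathcal{A}$ by the transport estimate for \eqref{17d}, recover the $L^2_t$ dissipation of $b$ and $\H$ from the localized version of \eqref{18} using the smallness of $\|\mathcal{A}\|_{\hat{B}^1}$, and close via the bootstrap $X(t)\le C(X(0)+X(t)^2)$. The only cosmetic slip is the labeling of the regimes for the slow wave (the oscillatory/hyperbolic regime $k+1\ge 2q$ is the low-frequency one, the parabolic regime is $k+1<2q$), which does not affect the argument.
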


We refer the reader to Section 3 below for the definiton of \textit{hybrid Besov space} $\tilde{B}^{s,t}$.

Provided that the energy estimate \eqref{ES} holds true, Theorem \ref{MT} is verified by extending a local solution. The local wellposedness with sufficiently smooth data had been
considered in \cite{KW}. For general data, a fixed-point argument will be sufficient to construct a local existence (see for instance \cite{QZ}). From this standpoint, we will focus on in this paper
how to obtain the estimate \eqref{ES}. 

\texttt{Strategy of Proof.}  In order to show that a local solution can be extended to be a global one, we only need to prove 
the uniform estimate \eqref{ES}. For this purpose, we denote
\begin{equation*}
 \begin{split}
X(t)&=\|\mathcal{A}\|_{L^\infty_t(\hat{B}^{1})}+\|\u\|_{L_t^\infty(\hat{B}^{0})}+\left\|\lambda_{-}^{-\f12}(D)\mathcal{H}_1\right\|_{L_t^\infty(\tilde{B}^{0,1})}
+\left\|\lambda_{+}^{-\f12}(D)\mathcal{H}_2\right\|_{L_t^\infty(\hat{B}^{0}\cap\hat{B}^1)}\\
&\qquad+\|\u\|_{L_t^1(\hat{B}^{2})}+\|b\|_{L_t^2(\hat{B}^{1})}+\|\H\|_{L_t^2(\hat{B}^{1})},  
 \end{split}
\end{equation*}
and we are going to show
\begin{equation}\tag{$\mathfrak{G}$}
X(t)\le C\Big(X(0)+X^2(t)\Big).
\end{equation}
Once ($\mathfrak{G}$) was shown, the existence part of Theorem \ref{MT} is done since by the continuity of $X(t)$ and the smallness of the initial data, there exists a constant $\Gamma$ such that
$$X(t)\le \Gamma X(0),$$ and hence local solutions can be extended.

\begin{Remark} Three remarks go as follows:
\begin{itemize}
 \item 
In spirit of the method in \cite{RD1}, one can verify that the solution in Theorem \ref{MT} is unique, but for the clarity of the global existence issue, we omit the proof of the
uniqueness here.
\item Theorem \ref{MT} says that the global wellposedness of MHD with zero magnetic diffusivity strongly depends on the flow. In other words, to ensure the global wellposedness,
not only the perturbation is small, the coupling between the flow and the magnetic field need to be very subtle (see Proposition \ref{p11} and \eqref{19}).
\item Theorem \ref{MT} and its strategy of proof can be extended to the three-dimensional
case with a slight modification for the regularity of \textit{hybrid Besov spaces.}
\end{itemize}
\end{Remark}

Theorem \ref{MT} coincides with results in \cite{HW2} and \cite{RD} for the incompressible model of MHD and compressible Navier-Stokes equations respectively. Indeed, one has the following estimate.
\begin{Lemma}\label{le}
Assume that $\|\mathcal{A}\|_{\hat{B}^1}\le \i$ for sufficiently small $\i$. Then
\begin{subequations}\label{le1}
 \begin{align}
\|b-\H_1\|_{\tilde{B}^{0,1}}+\|\H_2\|_{\tilde{B}^{0,1}}\lesssim \left\|\lambda_{-}^{-\f12}(D)\mathcal{H}_1\right\|_{\tilde{B}^{0,1}}+\left\|\lambda_{+}^{-\f12}(D)\mathcal{H}_2\right\|_{\hat{B}^0\cap\hat{B}^1},\label{le1b}\\
\|b\|_{\hat{B}^0\cap\hat{B}^1}\lesssim \left\|\lambda_{-}^{-\f12}(D)\mathcal{H}_1\right\|_{\tilde{B}^{0,1}}+\left\|\lambda_{+}^{-\f12}(D)\mathcal{H}_2\right\|_{\hat{B}^0\cap\hat{B}^1}.\label{le1a}
 \end{align}
\end{subequations}
\end{Lemma}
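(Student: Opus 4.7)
The plan is to (i) use the algebraic identities from Subsection~2.1 to reduce the three left-hand quantities to entries of $\mathcal{A}$; (ii) invert the spectral relation \eqref{16d} block-wise to express those entries through zero-order Fourier multipliers applied to $\lambda_{\pm}^{-\f12}(D)\mathcal{H}$; and (iii) absorb the quadratic remainder $\det\mathcal{A}$ using a product estimate together with the smallness of $\|\mathcal{A}\|_{\hat{B}^1}$. From \eqref{16a}--\eqref{16b} one has
\[
b-\H_1=\mathcal{A}_{11}+\det\mathcal{A},\qquad \H_2=-\mathcal{A}_{21},\qquad b=\mathcal{A}_{11}+\mathcal{A}_{22}+\det\mathcal{A},
\]
so only the entries $\mathcal{A}_{11},\mathcal{A}_{21},\mathcal{A}_{22}$ require linear estimation. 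Writing $\alpha=x+\beta$ with $\beta$ decaying and noting that dyadic Littlewood--Paley blocks stay away from $\xi=0$ (so $\lambda_{\pm}$ are nowhere vanishing on them), I invert \eqref{16d} block-wise to obtain $\widehat{(\mathcal{P}\beta)}_1=-\lambda_-^{-1}\mathcal{H}_1+\lambda_-^{-1}(\mathcal{P}\nabla\det\mathcal{A})_1$ and analogously for the second component, then use $\mathcal{P}^2=I$ to recover $\hat{\beta}_i=\mathcal{P}_{i1}\widehat{(\mathcal{P}\beta)}_1+\mathcal{P}_{i2}\widehat{(\mathcal{P}\beta)}_2$. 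Thus the linear part of $\hat{\mathcal{A}}_{ij}=i\xi_j\hat{\beta}_i$ is $-M^-_{ij}(\xi)\widehat{\lambda_-^{-\f12}\mathcal{H}_1}-M^+_{ij}(\xi)\widehat{\lambda_+^{-\f12}\mathcal{H}_2}$, where $M^-_{ij}(\xi)=i\xi_j\mathcal{P}_{i1}(\xi)\lambda_-^{-\f12}(\xi)$ and $M^+_{ij}(\xi)=i\xi_j\mathcal{P}_{i2}(\xi)\lambda_+^{-\f12}(\xi)$.

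The heart of the proof is then to verify that each $M^{\pm}_{ij}$ is an $L^\infty$ symbol for the three relevant indices $(i,j)\in\{(1,1),(2,1),(2,2)\}$. For the $+$ side, $\lambda_+\in[|\xi|^2,2|\xi|^2]$ implies $|\xi_j|\lambda_+^{-\f12}\le 1$, and combined with $|\mathcal{P}_{i2}|\le 1$ this gives $|M^+_{ij}|\le 1$. For the $-$ side, the identity $\lambda_-=\xi_1^2/(1+\sqrt{1-\xi_1^2/|\xi|^2})$ yields $\lambda_-^{\f12}\gtrsim|\xi_1|$, which directly handles $M^-_{11}$ via $|\mathcal{P}_{11}|\le 1$; for $M^-_{21}$ and $M^-_{22}$ the explicit form
\[
\mathcal{P}_{21}(\xi)=\frac{\xi_1(\f12+r(\xi))}{\sqrt{\xi_2^2+\xi_1^2(\f12+r(\xi))^2}}
\]
supplies an extra factor $\xi_1$ that cancels $\lambda_-^{-\f12}$, yielding $|M^-_{21}|,|M^-_{22}|\lesssim 1$. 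Being zero-order symbols, each $M^{\pm}_{ij}$ defines a bounded Fourier multiplier on every Littlewood--Paley block, hence on the hybrid Besov norms $\tilde{B}^{0,1}$ and $\hat{B}^0\cap\hat{B}^1$.

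The nonlinear contributions (the explicit $\det\mathcal{A}$ in Step~(i) and the terms $\lambda_{\pm}^{-1}\mathcal{P}\nabla\det\mathcal{A}$ in Step~(ii)) are controlled by the critical-Besov product estimate from Section~3, which gives $\|\det\mathcal{A}\|_{\hat{B}^1}\lesssim\|\mathcal{A}\|_{\hat{B}^1}^2\le\i\|\mathcal{A}\|_{\hat{B}^1}$ and the analogous bounds in $\tilde{B}^{0,1}$ and $\hat{B}^0\cap\hat{B}^1$; under the smallness hypothesis these terms are absorbed and both inequalities follow. The main obstacle is the multiplier analysis in the middle step: $\mathcal{P}(\xi)$ is discontinuous at $\xi=0$, its normalizer degenerates along $\xi_2=0$, and $\lambda_-$ vanishes along $\xi_1=0$, so each bound on $M^-_{ij}$ rests on an exact cancellation between the $\xi_1$-factor carried by $\mathcal{P}_{21}$ and the singular weight $\lambda_-^{-\f12}$. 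Instructively, the analogous multiplier $\xi_2\mathcal{P}_{11}\lambda_-^{-\f12}$ that would arise for $\mathcal{A}_{12}$ is \emph{not} bounded (its size is controlled only by the unbounded quotient $|\xi_2|/|\xi_1|$), which is precisely why the lemma is formulated in terms of $b-\H_1$, $\H_2$, and $b$, all of which avoid $\mathcal{A}_{12}$.
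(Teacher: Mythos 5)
Your reduction of the linear part follows essentially the same route as the paper: the identities $b-\H_1=\mathcal{A}_{11}+\det\mathcal{A}$, $\H_2=-\mathcal{A}_{21}$, $b=\tr\mathcal{A}+\det\mathcal{A}$, the block-wise inversion of \eqref{16d} (this is exactly \eqref{le2}), and the symbol bounds $|\xi_1|\lambda_-^{-\f12}\lesssim 1$, $|\xi_j|\lambda_+^{-\f12}\le 1$, $|\mathcal{P}_{21}(\xi)|\lesssim|\xi_1|/|\xi|$ are precisely what underlies \eqref{le21}; your closing observation about why $\mathcal{A}_{12}$ must be avoided is correct and a genuinely useful complement. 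One imprecision in this part, though: for \eqref{le1b} the target norm is $\tilde{B}^{0,1}$ while the $\mathcal{H}_2$-datum is measured only in $\hat{B}^0\cap\hat{B}^1$, which by \eqref{25a} is the strictly \emph{weaker} norm. Mere $L^\infty$-boundedness of $M^+_{i1}$ gives $\|M^+_{i1}(D)g\|_{\tilde{B}^{0,1}}\lesssim\|g\|_{\tilde{B}^{0,1}}$, not $\lesssim\|g\|_{\hat{B}^0\cap\hat{B}^1}$. What actually saves the estimate is the quantitative gain $|M^+_{i1}(\xi)|\lesssim|\xi_1|/|\xi|\approx 2^{k-q}$ on the block $\hat{\D}_{q,k}$, which converts the weight $2^{2q-k}$ into $2^{q}$; you need to state and use this decay, and "zero-order symbol, hence bounded on the norms" does not deliver the cross-norm inequality you are asserting.

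The substantive gap is the treatment of $\det\mathcal{A}$. A bound of the form $\|\det\mathcal{A}\|_{\tilde{B}^{0,1}}\lesssim\|\mathcal{A}\|_{\tilde{B}^{0,1}}\|\mathcal{A}\|_{\hat{B}^1}$ involves $\|\mathcal{A}_{12}\|_{\tilde{B}^{0,1}}$ and $\|\mathcal{A}_{22}\|_{\tilde{B}^{0,1}}$, which are controlled neither by the hypothesis $\|\mathcal{A}\|_{\hat{B}^1}\le\i$ nor by the right-hand side of the lemma — your own final remark shows $\mathcal{A}_{12}$ \emph{cannot} be estimated in $\tilde{B}^{0,1}$, so this undercuts your own absorption step. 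The alternative $\|\det\mathcal{A}\|_{\hat{B}^1}\lesssim\i\|\mathcal{A}\|_{\hat{B}^1}$ produces a term that is neither of the form $\i\times(\mbox{left-hand side})$, hence not absorbable, nor bounded by the right-hand side; moreover the $\hat{B}^0$ norm of $\mathcal{A}$ needed for \eqref{le1a} is not assumed at all. The missing idea is the factorization \eqref{le6},
\begin{equation*}
\det\mathcal{A}=\mathcal{A}_{11}\mathcal{A}_{22}-\mathcal{A}_{21}\mathcal{A}_{12}
=\big(b-\H_1-\det\mathcal{A}\big)\,\mathcal{A}_{22}+\H_2\,\mathcal{A}_{12},
\end{equation*}
which places one of the quantities actually being estimated ($b-\H_1$, $\H_2$, or $\det\mathcal{A}$ itself) in the $\tilde{B}^{0,1}$ (resp.\ $\hat{B}^0\cap\hat{B}^1$) slot of the product law of Lemma \ref{lee} (resp.\ Proposition \ref{p2}), and the second column of $\mathcal{A}$ in the $\hat{B}^1$ slot where smallness is available. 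Only with this structure does one get $\|\det\mathcal{A}\|_{\tilde{B}^{0,1}}\lesssim\i\big(\|b-\H_1\|_{\tilde{B}^{0,1}}+\|\H_2\|_{\tilde{B}^{0,1}}+\|\det\mathcal{A}\|_{\tilde{B}^{0,1}}\big)$, after which the self-improvement closes; as written, your nonlinear step does not.
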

\begin{proof}
From \eqref{16d}, it follows
\begin{equation}\label{le2}
 \alpha=-\mathcal{P}\Big(\lambda_{-}^{-1}(D)\mathcal{H}_1, \lambda_{+}^{-1}(D)\mathcal{H}_2\Big)^\top+\mathcal{P}\textrm{diag}(\lambda_{-}^{-1}(D), \lambda_{+}^{-1}(D))\mathcal{P}\nabla\det\mathcal{A}.
\end{equation}
Note that the symbol associated with the differential operator $\mathcal{P}\nabla$ is
\begin{equation}\label{le22}
\f{1}{\sqrt{\xi_2^2+\xi_1^2\left(\f12+r(\xi)\right)^2}}\left(\xi_1\xi_2\Big(-\f12+r(\xi)\Big),\xi_1^2\Big(\f12+r(\xi)\Big)+\xi^2_2\right),
\end{equation}
and thus
\begin{subequations}\label{le21}
\begin{align}
&\mathcal{F}\Big(\mathcal{P}\textrm{diag}(\lambda_{-}^{-1}(D), \lambda_{+}^{-1}(D))\mathcal{P}\nabla\det\mathcal{A}\Big)_1
\lesssim \left|\lambda_{-}^{-\f12}(\xi)\mathcal{F}(\det\mathcal{A})\right|+\left|\lambda_{+}^{-\f12}(\xi)\mathcal{F}(\det\mathcal{A})\right|\label{le21a}\\
&\mathcal{F}\Big(\mathcal{P}\textrm{diag}(\lambda_{-}^{-1}(D), \lambda_{+}^{-1}(D))\mathcal{P}\nabla\det\mathcal{A}\Big)_2
\lesssim |\xi|^{-1}\left|\mathcal{F}(\det\mathcal{A})\right|\label{le21b}
\end{align}
\end{subequations}
since $\lambda_{-}(\xi)\approx \xi_1^2$ and $\lambda_{+}(\xi)\approx |\xi|^2$.

\texttt{Estimate of \eqref{le1b}.} Identities \eqref{16a} and \eqref{16b} imply
$$b-\H_1=\partial_{x_1}\alpha_1+\det\mathcal{A}\quad\textrm{and}\quad\H_2=-\partial_{x_1}\alpha_2.$$
This, combining with \eqref{le2} and \eqref{le21a}, gives
\begin{equation*}
 \|\hat{\D}_{q,k}(b-\H_1)\|_{L^2}+\|\hat{\D}_{q,k}\H_2\|_{L^2}\lesssim 2^k\|\hat{\D}_{q,k}\alpha\|_{L^2}+\|\hat{\D}_{q,k}\det\mathcal{A}\|_{L^2},
\end{equation*}
and hence one deduces from \eqref{le2} that
\begin{equation}\label{le4}
\begin{split}
\|b-\H_1\|_{\tilde{B}^{0,1}}+\|\H_2\|_{\tilde{B}^{0,1}}\lesssim \left\|\lambda_{-}^{-\f12}(D)\mathcal{H}_1\right\|_{\tilde{B}^{0,1}}+\left\|\lambda_{+}^{-\f12}(D)\mathcal{H}_2\right\|_{\hat{B}^0\cap\hat{B}^1}
+\|\det\mathcal{A}\|_{\tilde{B}^{0,1}}.
\end{split}
\end{equation}

Note that the identities \eqref{16a} and \eqref{16b} imply
\begin{equation}\label{le6}
 \begin{split}
\det\mathcal{A}&=\partial_{x_1}\alpha_1\partial_{x_2}\alpha_2-\partial_{x_1}\alpha_2\partial_{x_2}\alpha_1\\
&=(b-\H_1)\partial_{x_2}\alpha_2+\H_2\partial_{x_2}\alpha_1+\det\mathcal{A}\partial_{x_2}\alpha_2.
 \end{split}
\end{equation}
Thus Lemma \ref{lee} yields
$$\|\det\mathcal{A}\|_{\tilde{B}^{0,1}}\lesssim \|b-\H_1\|_{\tilde{B}^{0,1}}\|\mathcal{A}\|_{\hat{B}^1}+\|\H_2\|_{\tilde{B}^{0,1}}\|\mathcal{A}\|_{\hat{B}^1}+\|\mathcal{A}\|_{\hat{B}^1}\|\det\mathcal{A}\|_{\tilde{B}^{0,1}},$$
which gives, if $\|\mathcal{A}\|_{\hat{B}^1}\le \i$ for sufficiently small $\i$
\begin{equation}\label{le3}
\|\det\mathcal{A}\|_{\tilde{B}^{0,1}}\lesssim \|b-\H_1\|_{\tilde{B}^{0,1}}\|\mathcal{A}\|_{\hat{B}^1}+\|\H_2\|_{\tilde{B}^{0,1}}\|\mathcal{A}\|_{\hat{B}^1}. 
\end{equation}
The desired estimate \eqref{le1b} follows from \eqref{le4}, \eqref{le3} and the assumption that $\|\mathcal{A}\|_{\hat{B}^1}$ is sufficiently small.

\texttt{Estimate of \eqref{le1a}.} The identity \eqref{16b} implies that
$$\|\hat{\D}_{q,k}b\|_{L^2}\lesssim \|\hat{\D}_{q,k}\Dv\alpha\|_{L^2}+\|\hat{\D}_{q,k}\det\mathcal{A}\|_{L^2},$$
which, combining \eqref{le21}, gives
\begin{equation*}
\|\hat{\D}_{q,k}b\|_{L^2}\lesssim \|\hat{\D}_{q,k}\lambda_{-}^{-\f12}(D)\mathcal{H}_1\|_{L^2}+\|\hat{\D}_{q,k}\lambda_{+}^{-\f12}(D)\mathcal{H}_2\|_{L^2}+\|\hat{\D}_{q,k}\det\mathcal{A}\|_{L^2}.
\end{equation*}
Thus it follows from the identity above and the inequality \eqref{25a} that
\begin{equation}\label{le5}
\begin{split}
\|b\|_{\hat{B}^0\cap\hat{B}^1}&\le \|\lambda_{-}^{-\f12}(D)\mathcal{H}_1\|_{\hat{B}^0\cap\hat{B}^1}+\|\lambda_{+}^{-\f12}(D)\mathcal{H}_2\|_{\hat{B}^0\cap\hat{B}^1}+\|\det\mathcal{A}\|_{\hat{B}^0\cap\hat{B}^1}\\
&\lesssim \|\lambda_{-}^{-\f12}(D)\mathcal{H}_1\|_{\tilde{B}^{0,1}}+\|\lambda_{+}^{-\f12}(D)\mathcal{H}_2\|_{\hat{B}^0\cap\hat{B}^1}+\|\det\mathcal{A}\|_{\hat{B}^0\cap\hat{B}^1}
\end{split}
\end{equation}

The identity \eqref{le6}, Proposition \ref{p2} and the iequality \eqref{25a} imply
\begin{equation*}
\begin{split}
\|\det\mathcal{A}\|_{\hat{B}^0\cap\hat{B}^1}&\le \|b-\H_1\|_{\hat{B}^0\cap\hat{B}^1}\|\mathcal{A}\|_{\hat{B}^1}+\|\H_2\|_{\hat{B}^0\cap\hat{B}^1}\|\mathcal{A}\|_{\hat{B}^1}
+\|det\mathcal{A}\|_{\hat{B}^0\cap\hat{B}^1}\|\mathcal{A}\|_{\hat{B}^1}\\
&\lesssim \|b-\H_1\|_{\tilde{B}^{0,1}}\|\mathcal{A}\|_{\hat{B}^1}+\|\H_2\|_{\tilde{B}^{0,1}}\|\mathcal{A}\|_{\hat{B}^1}
+\|det\mathcal{A}\|_{\hat{B}^0\cap\hat{B}^1}\|\mathcal{A}\|_{\hat{B}^1},
\end{split}
\end{equation*}
and thus
\begin{equation}\label{le7}
 \|\det\mathcal{A}\|_{\hat{B}^0\cap\hat{B}^1}\lesssim \|b-\H_1\|_{\tilde{B}^{0,1}}\|\mathcal{A}\|_{\hat{B}^1}+\|\H_2\|_{\tilde{B}^{0,1}}\|\mathcal{A}\|_{\hat{B}^1}
\end{equation}
if $\|\mathcal{A}\|_{\hat{B}^1}$ is sufficiently small.

The desired estimate \eqref{le1a} follows from \eqref{le5}, \eqref{le7}, \eqref{le1b} and the smallness of $\|\mathcal{A}\|_{\hat{B}^1}.$
\end{proof}

As a direct consequence of Lemma \ref{le} and the inequality \eqref{25a}, it follows
\begin{equation}\label{le8}
 \begin{split}
\|b\|_{\hat{B}^0\cap\hat{B}^1}+\|\H\|_{\hat{B}^0\cap\hat{B}^1}\lesssim \left\|\lambda_{-}^{-\f12}(D)\mathcal{H}_1\right\|_{\tilde{B}^{0,1}}+\left\|\lambda_{+}^{-\f12}(D)\mathcal{H}_2\right\|_{\hat{B}^0\cap\hat{B}^1}.  
 \end{split}
\end{equation}

\bigskip\bigskip
%%%%%%%%%%%%%%%%%%%%%%%%%%%%%%%%%%%%%%%%%%%%%%%%%%%%%%%%%%%%%%%%%%%%%%%%%%%%%%%%%%%%%%%%%%%%%%%%%%%%%%%%%%%%%%%%%%%%%%%%%%

\section{Besov Spaces}

Throughout this paper,  we use $C$ for a generic constant, and
denote  $a\le Cb$ by  $a\lesssim b$. The notation $a\thickapprox
b$ means that $a\lesssim b$ and $b\lesssim a$. Also we use
$(\alpha_{q,k})_{q,k\in\mathbb{Z}}$ to denote a sequence such that
$\sum_{q,k\in\mathbb{Z}}\alpha_{q,k}\le 1$. The standard
summation notation over the repeated index is adopted in this
paper.

The definition of the homogeneous Besov space is built on an
homogeneous Littlewood-Paley decomposition. First, we introduce a
function $\psi\in C^\infty(\R^2)$, supported in
$\mathcal{C}=\{\xi\in\R^2: \f{5}{6}\le|\xi|\le\f{12}{5}\}$ and
such that
$$\sum_{q\in\mathbb{Z}}\psi(2^{-q}\xi)=1\textrm{ if }\xi\neq 0.$$
Denoting $\mathcal{F}^{-1}\psi$ by $h$, we define the dyadic
blocks as follows:
$$\D_q f=\psi(2^{-q}D)f=2^{2q}\int_{\R^2}h(2^qy)f(x-y)dy,$$
and
$$S_q f=\sum_{p\le q-1}\D_pf.$$
The formal decomposition
\begin{equation}\label{21}
f=\sum_{q\in\mathbb{Z}}\D_qf
\end{equation}
is called homogeneous Littlewood-Paley decomposition in $\R^2$. Similarly, we use $\D_k^1$ to denote the homogeneous Littlewood-Paley decomposition in $\R$ in the direction of $x_1$.

For $s\in\R$ and $f\in \mathcal{S}'(\R^2)$, we denote
$$\|f\|_{\dot{B}^s_{p,r}}\overset{def}{=}\left(\sum_{q\in\mathbb{Z}}2^{sqr}\|\D_qf\|^r_{L^p}\right)^{\f{1}{r}}.$$
As $p=2$ and $r=1$, we denote $\|\cdot\|_{\dot{B}^s_{p,r}}$ by
$\|\cdot\|_{B^s}$.
\begin{Definition}
Let $s\in\R$, and $m=-\left[2-s\right]$. If $m<0$, we set
$$B^s=\left\{f\in \mathcal{S}'(\R^2)|\|f\|_{B^s}<\infty\textrm{
and }f=\sum_{q\in\mathbb{Z}}\D_qf\textrm{ in
}\mathcal{S}'(\R^2)\right\}.$$ If $m\ge 0$, we denote by
$\mathcal{P}_m$ the set of two variables polynomials of degree
$\le m$ and define
$$B^s=\left\{f\in \mathcal{S}'(\R^2)/\mathcal{P}_m|\|f\|_{B^s}<\infty\textrm{
and }f=\sum_{q\in\mathbb{Z}}\D_qf\textrm{ in
}\mathcal{S}'(\R^2)/\mathcal{P}_m\right\}.$$
\end{Definition}

Functions in $B^s$ has many good properties (see Proposition 2.5
in \cite{RD}):
\begin{Proposition}\label{p3}
The following properties hold:
\begin{itemize}
\item Derivation: $\|f\|_{B^s}\thickapprox \|\nabla f\|_{B^{s-1}}$;
\item Fractional derivation: let $\Gamma=\sqrt{-\D}$ and
$\sigma\in\R$; then the operator $\Gamma^\sigma$ is an isomorphism
from $B^s$ to $B^{s-\sigma}$;
\item Algebraic properties: for $s>0$, $B^s\cap L^\infty$ is an
algebra.
\end{itemize}
\end{Proposition}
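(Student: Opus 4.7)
The plan is to verify each of the three properties in turn using the homogeneous Littlewood--Paley decomposition \eqref{21} together with Bernstein-type inequalities. The key point throughout is that for every $q\in\mathbb{Z}$ the spectrum of $\D_q f$ is contained in the annulus $2^q\mathcal{C}$, so frequency-localized blocks behave like $2^q$-scaled objects.

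For the derivation property, since $\D_q$ commutes with $\nabla$, Bernstein's inequality on $2^q\mathcal{C}$ yields the two-sided bound $c\,2^q\|\D_q f\|_{L^2}\le\|\nabla\D_q f\|_{L^2}\le C\,2^q\|\D_q f\|_{L^2}$. Multiplying by $2^{(s-1)q}$ and summing over $q\in\mathbb{Z}$ gives $\|\nabla f\|_{B^{s-1}}\thickapprox\|f\|_{B^s}$. For the fractional derivation, $\Gamma^\sigma$ is the Fourier multiplier of symbol $|\xi|^\sigma$; on a block localized near $|\xi|\sim 2^q$, the Mikhlin multiplier theorem (or a direct estimate of $\Gamma^\sigma$ convolved against a Littlewood--Paley cutoff) gives $\|\Gamma^\sigma\D_q f\|_{L^2}\thickapprox 2^{q\sigma}\|\D_q f\|_{L^2}$. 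Weighting by $2^{(s-\sigma)q}$ and summing shows $\|\Gamma^\sigma f\|_{B^{s-\sigma}}\thickapprox\|f\|_{B^s}$, and the two-sided version produces the inverse $\Gamma^{-\sigma}$, so $\Gamma^\sigma$ is an isomorphism as claimed.

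For the algebra property, the natural tool is Bony's paraproduct decomposition $fg=T_f g+T_g f+R(f,g)$ with $T_f g=\sum_q S_{q-1}f\,\D_q g$ and $R(f,g)=\sum_{|q-q'|\le 1}\D_q f\,\D_{q'}g$. Each block of $T_f g$ has spectrum supported in an annulus of size $\sim 2^q$, so at most $O(1)$ indices contribute to $\D_p(T_f g)$; combined with $\|S_{q-1}f\|_{L^\infty}\le\|f\|_{L^\infty}$ this gives $\|T_f g\|_{B^s}\lesssim\|f\|_{L^\infty}\|g\|_{B^s}$ for every $s\in\R$, and symmetrically for $T_g f$. The remainder is the delicate piece: $\D_q f\,\D_{q'}g$ is only supported in a ball of radius $\sim 2^q$, so after applying $\D_p$ every index $q\ge p-N_0$ contributes; bounding $\|\D_q f\,\D_{q'}g\|_{L^2}\le\|\D_q f\|_{L^\infty}\|\D_{q'}g\|_{L^2}$ and summing the geometric series $\sum_{q\ge p-N_0}2^{sp}(\cdots)$ converges precisely when $s>0$, yielding $\|R(f,g)\|_{B^s}\lesssim\|f\|_{L^\infty}\|g\|_{B^s}+\|g\|_{L^\infty}\|f\|_{B^s}$. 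Summing the three contributions gives the algebra bound.

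The main obstacle is the remainder term in the third item: the positivity $s>0$ enters exactly through the convergence of the geometric series on the low--high/high--low interactions produced by same-scale blocks, and one must keep careful track of the different spectral localizations (annulus for paraproducts versus ball for the remainder) to extract the right exponential gain in $p-q$.
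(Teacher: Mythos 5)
Your argument is correct and is the standard one: Bernstein's inequalities on annuli for the first two items and Bony's paraproduct decomposition (with the remainder requiring $s>0$) for the algebra property. The paper itself gives no proof of this proposition, simply citing Proposition 2.5 of Danchin's work \cite{RD}, and your proof is essentially the argument found there.
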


To handle the degeneracy of the hyperbolicity, we further apply the Littlewood-Paley decomposition in $x_1$ to introduce a smaller Besov space $\hat{B}^s$ as
\begin{Definition}
Let $q, k\in \mathbb{Z}$ and $s\in\R$. We set
$$\|f\|_{\hat{B}^{s}}=\sum_{q,k\in\mathbb{Z}}2^{qs}\|\hat{\D}_{q,k}f\|_{L^2}\quad \textrm{with}\quad \hat{\D}_{q,k}\overset{def}=\D_q\D_{k}^1.$$
The Besov space $\hat{B}^{s}$ is defined by
$$\hat{B}^{s}=\left\{f\in\mathcal{S}'(\R^2)|\|f\|_{\hat{B}^{s}}<\infty\right\}.$$
Note that $\hat{B}^s\subset B^s$ for $s\in\R$ since $\ell_1\subset \ell_2$. In particular $\hat{B}^1\subset B^1\subset L^\infty$.
\end{Definition}
It is implicitly indicated in the double Littlewood-Paley decomposition $\hat{\D}_{q,k}$ that $k\le q$ since $|\xi_1|\le |\xi|$.

The product in Besov space $\hat{B}^s$ can be estimated by
\begin{Proposition}\label{p2}
For all $s, t\le 1$ such that $s+t>0$,
$$\|fg\|_{\hat{B}^{s+t-1}}\lesssim
\|f\|_{\hat{B}^{s}}\|g\|_{\hat{B}^t}.$$
\end{Proposition}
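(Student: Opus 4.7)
\emph{Proof proposal.} The plan is to use Bony's paraproduct decomposition, applied first in the full variable $\xi$ and then a second time in $\xi_1$ when needed, carefully tracking both indices $(q,k)$ that define the hybrid norm.

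First I would write $fg = T_f g + T_g f + R(f,g)$ with $T_f g = \sum_{q'} S_{q'-1} f\,\Delta_{q'} g$ and $R(f,g) = \sum_{q'} \Delta_{q'} f\,\widetilde{\Delta}_{q'} g$, where $\widetilde{\Delta}_{q'} = \Delta_{q'-1}+\Delta_{q'}+\Delta_{q'+1}$. By the $(f,s)\leftrightarrow(g,t)$ symmetry of the statement it suffices to bound $T_fg$ and $R(f,g)$ in $\hat{B}^{s+t-1}$.

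For the paraproduct, apply $\hat{\Delta}_{q,k}=\Delta_q\Delta^1_k$. Fourier-support considerations in the full variable localize the sum to $|q'-q|\le N$. To absorb the $\Delta^1_k$ factor I would decompose each summand $S_{q'-1}f\,\Delta_{q'}g$ a second time via one-dimensional Bony in $x_1$, obtaining the three pieces $T^1_{S_{q'-1}f}\Delta_{q'}g$, $T^1_{\Delta_{q'}g}S_{q'-1}f$, and $R^1(S_{q'-1}f,\Delta_{q'}g)$. Each piece is estimated by H\"older, placing the low-frequency factor in $L^\infty$ (controlled by a Bernstein partial sum of the form $\sum_{p\le q',\,p_1\le k}2^{(p+p_1)/2}\|\hat{\Delta}_{p,p_1}h\|_{L^2}$) and the high-frequency factor in $L^2$. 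The hypothesis $s,t\le 1$ is exactly what ensures that such partial sums are absorbed into $\|f\|_{\hat{B}^s}$ (respectively $\|g\|_{\hat{B}^t}$) with at worst a gain $2^{q(1-s)}$ (respectively $2^{q(1-t)}$), which cancels the weight $2^{q(s+t-1)}$; the resulting double series in $(q,k)$ is then an $\ell^1$-convolution of summable sequences.

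For the remainder, $\Delta_{q'}f\,\widetilde{\Delta}_{q'}g$ has Fourier support in a ball of radius $\lesssim 2^{q'+1}$, so $\hat{\Delta}_{q,k}$ vanishes unless $q'\ge q-N$. Bernstein in two dimensions yields $\|\hat{\Delta}_{q,k}(\Delta_{q'}f\,\widetilde{\Delta}_{q'}g)\|_{L^2}\lesssim 2^{q}\|\Delta_{q'}f\|_{L^2}\|\widetilde{\Delta}_{q'}g\|_{L^2}$. Multiplying by $2^{q(s+t-1)}$ bounds each term of the $(q,k)$-series by $\sum_{q'\ge q-N}2^{(q-q')(s+t)}c_{q'}\,\|f\|_{\hat{B}^s}\|g\|_{\hat{B}^t}$, and the hypothesis $s+t>0$ is precisely what makes this summable by a Young-type argument; the summation in $k$ with $k\le q$ is absorbed into the $\ell^1$-sequence.

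The main obstacle will be the paraproduct step: the hybrid norm weights only the $q$-scale by $2^{qs}$ but enforces $\ell^1$-summability in both $q$ and $k$, so the secondary Bony decomposition in $x_1$ must be handled in all four low/high regimes in $k$ without losing the Bernstein gain. A secondary subtlety is the endpoint case $s=1$ or $t=1$, which I would treat by replacing the partial-sum estimate with the direct embedding $\hat{B}^1\subset L^\infty$ already recorded above the statement.
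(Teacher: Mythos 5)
Your proposal follows essentially the same route as the paper: a double Bony decomposition (isotropic in the full variable composed with the one-dimensional decomposition in $x_1$), $L^\infty$ control of the low-frequency blocks via Bernstein partial sums exploiting $s,t\le 1$, and a $2^{q}$ Bernstein gain plus a convolution inequality using $s+t>0$ for the remainder. The paper organizes the same argument as nine pieces $TT^1,\dots,RR^1$ and works out the representative terms $TR^1$ and $RR^1$, which correspond exactly to the cases you describe.
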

The proof of Proposition \ref{p2} will be postponed to the Appendix.

To deal with functions with different regularities for different frequencies as suggested by the spectral analysis in Section 2, it is
more effective to work in a \textit{hybrid Besov space} $\tilde{B}^{s,t}$. 
\begin{Definition}
Let $q, k\in \mathbb{Z}$ and $s,t\in\R$. We set
$$\|f\|_{\tilde{B}^{s,t}}=\sum_{k+1\ge 2q}2^{qs}\|\hat{\D}_{q,k}f\|_{L^2}+\sum_{k+1< 2q}2^{(2q-k)t}\|\hat{\D}_{q,k}f\|_{L^2}.$$
The \textit{hybrid Besov space} $\tilde{B}^{s,t}$ is defined by
$$\tilde{B}^{s,t}=\left\{f\in\mathcal{S}'(\R^2)|\|f\|_{\tilde{B}^{s,t}}<\infty\right\}.$$
\end{Definition}

\begin{Remark}
Four remarks go as follows.
\begin{itemize}
 \item The space $\tilde{B}^{s,t}$ is not empty since for any function $\phi\in C^\infty(\R^2)$ with support of its fourier transform in the union of balls $\{\xi\in\R^2|2|\xi_1|\ge |\xi|^2\}$, 
it holds $\mathcal{F}\phi\in \tilde{B}^{s,t}.$
\item $\tilde{B}^{0,1}$ is continuously embedded into $L^\infty$. Indeed, for any fixed $q,k\in\mathbb{Z}$, it holds
\begin{equation*}
\begin{split}
\|\hat{\D}_{q,k}f\|_{L^\infty}&\le \|\mathcal{F}(\hat{\D}_{q,k}f)\|_{L^1}\\
&\lesssim
\begin{cases}
\|\mathcal{F}(\hat{\D}_{q,k}f)\|_{L^2}\textrm{meas}(\{2|\xi_1|\ge |\xi|^2\})\quad \textrm{if}\quad k+1\ge 2q;\\
\|\xi_1^{-1}|\xi|^2\mathcal{F}(\hat{\D}_{q,k}f)\|_{L^2}\||\xi_1||\xi|^{-2}\|_{L^2(\{2|\xi_1|<|\xi|^2\}\cap \{2^q\le |\xi|\le 2^{q+1}\})} \quad \textrm{otherwise}
\end{cases}\\
&\lesssim \max\{2, 2^{2q-k}\}\|\hat{\D}_{q,k}f\|_{L^2}\\
&\lesssim \|f\|_{\tilde{B}^{0,1}}.
\end{split}
\end{equation*}
Thus, $\sum_{q,k}\hat{\D}_{q,k}f$ uniformly converges to $f$ in $L^\infty$.
\item If $0\le s\le t$, $\tilde{B}^{s,t}\subset \hat{B}^s\cap\hat{B}^t$, that is
\begin{equation}\label{25a}
\|\phi\|_{\hat{B}^s\cap\hat{B}^t}=\max\{\|\phi\|_{\hat{B}^s},\|\phi\|_{\hat{B}^t}\}\lesssim \|\phi\|_{\tilde{B}^{s,t}}.
\end{equation}
\item Denote by $\check{B}^s$ the functional space with the norm
$$\|f\|_{\check{B}^s}\overset{def}=\sum_{q,k\in\mathbb{Z}}2^{(2q-k)s}\|\hat{\D}_{q,k}\phi\|_{L^2}.$$
It is easy to check that 
\begin{equation}\label{27a}
 \tilde{B}^{0,1}=\check{B}^0\cap\check{B}^1=\hat{B}^0\cap\check{B}^1.
\end{equation}
Moreover, similar as Proposition \ref{p2}, one can verify
\begin{equation}\label{27b}
\|fg\|_{\check{B}^1}\lesssim \|f\|_{\hat{B}^1}\|g\|_{\check{B}^1}.
\end{equation}
\end{itemize}
\end{Remark}
An estimate for the product in our hybrid Besov space $\tilde{B}^{0,1}$ is needed.
\begin{Lemma}\label{lee}
There holds true
$$\|fg\|_{\tilde{B}^{0,1}}\lesssim \|f\|_{\tilde{B}^{0,1}}\|g\|_{\hat{B}^1}.$$
\end{Lemma}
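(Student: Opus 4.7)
The plan is to leverage the norm equivalence $\tilde{B}^{0,1}=\hat{B}^0\cap \check{B}^1$ recorded in \eqref{27a}, so that $\|fg\|_{\tilde{B}^{0,1}}\approx \|fg\|_{\hat{B}^0}+\|fg\|_{\check{B}^1}$. It then suffices to bound each of the two pieces separately by $\|f\|_{\tilde{B}^{0,1}}\|g\|_{\hat{B}^1}$, and add.

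For the isotropic piece, I would apply Proposition \ref{p2} with the admissible pair $s=0,\ t=1$ (which satisfies $s,t\le 1$ and $s+t>0$), obtaining
\begin{equation*}
\|fg\|_{\hat{B}^0}\lesssim \|f\|_{\hat{B}^0}\|g\|_{\hat{B}^1}\lesssim \|f\|_{\tilde{B}^{0,1}}\|g\|_{\hat{B}^1},
\end{equation*}
where the last inequality is the embedding in \eqref{27a}. For the anisotropic piece I would use the commutativity of the pointwise product together with the product law \eqref{27b} applied with the roles of the two factors swapped, giving
\begin{equation*}
\|fg\|_{\check{B}^1}=\|gf\|_{\check{B}^1}\lesssim \|g\|_{\hat{B}^1}\|f\|_{\check{B}^1}\lesssim \|g\|_{\hat{B}^1}\|f\|_{\tilde{B}^{0,1}},
\end{equation*}
where again $\|f\|_{\check{B}^1}\le \|f\|_{\tilde{B}^{0,1}}$ comes from \eqref{27a}. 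Adding the two bounds yields the Lemma.

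The serious analytic work — namely the Bony-type paraproduct decomposition in the double dyadic grid $(q,k)$ and the careful index counting that exploits the weight $2^{2q-k}$ in the hyperbolic regime $\{k+1<2q\}$ — is already packaged into Proposition \ref{p2} and into the auxiliary inequality \eqref{27b}. Consequently, the main obstacle to the Lemma is not the proof above but the verification of \eqref{27b}; once that is in hand the Lemma reduces to the two-line bookkeeping I have just described.
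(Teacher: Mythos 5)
Your proposal is correct and is precisely the argument the paper intends: the paper's proof of Lemma \ref{lee} is the one-line remark that it follows from Proposition \ref{p2}, \eqref{27a} and \eqref{27b}, and you have simply spelled out that splitting $\|fg\|_{\tilde{B}^{0,1}}\approx\|fg\|_{\hat B^0}+\|fg\|_{\check B^1}$ and applying Proposition \ref{p2} (with $s=0$, $t=1$) to the first piece and \eqref{27b} (after commuting the factors) to the second. Your closing observation is also apt: the only substantive input not proved in the paper is \eqref{27b} itself, which the paper asserts can be verified "similarly as Proposition \ref{p2}."
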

\begin{proof}
It is a direct consequence of Proposition \ref{p2}, \eqref{27a} and \eqref{27b}.
\end{proof}

Throughout this paper, the following estimates for the convective
terms arising in the localized system is used several times.
\begin{Lemma}\label{cl}
Let $G$ be a smooth function away from the origin with the form $|\xi|^m\xi_{1}^{n}$. 
Then there hold true
\begin{equation}\label{25b}
 \begin{split}
&|(G(D)\hat{\D}_{q,k}(e\cdot\nabla f)|G(D)\hat{\D}_{q,k} f)|\\
&\quad\le C\alpha_{q,k}
2^{nk+mq}
\|e\|_{\hat{B}^{2}}\|f\|_{\hat{B}^{0}}\left\|G(D)\hat{\D}_{q,k}
f\right\|_{L^2},  
 \end{split}
\end{equation}
\begin{equation}\label{25}
\begin{split}
&|(G(D)\hat{\D}_{q,k}(e\cdot\nabla f)|G(D)\hat{\D}_{q,k} f)|\\
&\quad\le C\alpha_{q,k}
2^{nk+mq}\min\{2^{-1}, 2^{k-2q}\}
\|e\|_{\hat{B}^{2}}\|f\|_{\tilde{B}^{0,1}}\left\|G(D)\hat{\D}_{q,k}
f\right\|_{L^2},
\end{split}
\end{equation}
and
\begin{equation}\label{26}
\begin{split}
&\left|(G(D)\hat{\D}_{q,k}(e\cdot\nabla f)|\hat{\D}_{q,k} g)+(\hat{\D}_{q,k}(e\cdot\nabla
g)|G(D)\hat{\D}_{q,k} f)\right|\\&\le
C\alpha_{q,k}\|e\|_{\hat{B}^{2}}\Big(\left\|G(D)\hat{\D}_{q,k}f\right\|_{L^2}\|g\|_{\hat{B}^0}\\&\qquad+2^{nk+mq}\min\{2^{-1},2^{k-2q}\}
\|f\|_{\tilde{B}^{0,1}}\|\hat{\D}_{q,k}g\|_{L^2}\Big),
\end{split}
\end{equation}
where $\sum_{q,k\in\mathbb{Z}}\alpha_{q,k}\le 1$.
\end{Lemma}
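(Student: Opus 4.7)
The plan is to prove all three bounds by Bony's paraproduct decomposition combined with the standard commutator trick, in the spirit of \cite{RD}. Writing
$$e\cdot\nabla f=T_e\nabla f+T_{\nabla f}e+R(e,\nabla f),$$
and applying $\hat{\D}_{q,k}=\D_q\D_k^1$, only finitely many nearby frequency blocks contribute to $\hat{\D}_{q,k}(e\cdot\nabla f)$, so one may isolate a diagonal transport piece and a remainder:
$$\hat{\D}_{q,k}(e\cdot\nabla f)=S_{q-1,k-1}e\cdot\hat{\D}_{q,k}\nabla f+\mathcal{R}_{q,k},$$
where $\mathcal{R}_{q,k}$ collects commutators $[\hat{\D}_{q,k},S_{q'-1,k'-1}e]\cdot\nabla\hat{\D}_{q',k'}f$ with $|q'-q|,|k'-k|\le N$, along with $\hat{\D}_{q,k}T_{\nabla f}e$ and $\hat{\D}_{q,k}R(e,\nabla f)$. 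A mean-value argument on the convolution kernel yields $\|\mathcal{R}_{q,k}\|_{L^2}\lesssim \alpha_{q,k}\|\nabla e\|_{L^\infty}\|\hat{\D}_{q,k}f\|_{L^2}$, and $\hat{B}^2\hookrightarrow W^{1,\infty}$ absorbs $\|\nabla e\|_{L^\infty}$ into $\|e\|_{\hat{B}^2}$.

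For \eqref{25b} I pair the decomposition with $G(D)\hat{\D}_{q,k}f$. Bernstein's inequality on the support $\{|\xi|\sim 2^q,|\xi_1|\sim 2^k\}$ gives $\|G(D)\hat{\D}_{q,k}h\|_{L^2}\sim 2^{mq+nk}\|\hat{\D}_{q,k}h\|_{L^2}$, which produces the prefactor $2^{nk+mq}$. The diagonal transport piece, after commuting $G(D)$ past $S_{q-1,k-1}e$ (picking up another commutator of the same form) and integrating by parts, contributes $\tfrac12\int(\Dv S_{q-1,k-1}e)|G(D)\hat{\D}_{q,k}f|^2$, controlled by $\|e\|_{\hat{B}^2}\|G(D)\hat{\D}_{q,k}f\|_{L^2}^2$, which is of the required form after bounding one factor $\|G(D)\hat{\D}_{q,k}f\|_{L^2}$ by $2^{nk+mq}\|f\|_{\hat{B}^0}$. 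The proof of \eqref{25} is verbatim the same, except that $\|\hat{\D}_{q,k}f\|_{L^2}$ is estimated via the hybrid norm: directly from the definition,
$$\|\hat{\D}_{q,k}f\|_{L^2}\lesssim \min\{1,2^{k-2q}\}\|f\|_{\tilde{B}^{0,1}},$$
which yields the extra factor $\min\{2^{-1},2^{k-2q}\}$.

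For \eqref{26} the point is a cancellation driven by the antisymmetry of the transport operator. Summing the two inner products and substituting the decomposition for both $\hat{\D}_{q,k}(e\cdot\nabla f)$ and $\hat{\D}_{q,k}(e\cdot\nabla g)$, the diagonal parts combine into
$$\bigl(S_{q-1,k-1}e\cdot G(D)\hat{\D}_{q,k}\nabla f\bigm|\hat{\D}_{q,k}g\bigr)+\bigl(S_{q-1,k-1}e\cdot \hat{\D}_{q,k}\nabla g\bigm|G(D)\hat{\D}_{q,k}f\bigr),$$
which by integration by parts reduces to $-\bigl((\Dv S_{q-1,k-1}e)\,G(D)\hat{\D}_{q,k}f\bigm|\hat{\D}_{q,k}g\bigr)$, bounded by $\|e\|_{\hat{B}^2}\|G(D)\hat{\D}_{q,k}f\|_{L^2}\|\hat{\D}_{q,k}g\|_{L^2}$. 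The remainders $\mathcal{R}_{q,k}^f$ and $\mathcal{R}_{q,k}^g$ are estimated asymmetrically: the first is paired against $\hat{\D}_{q,k}g$ and bounded using $\|G(D)\hat{\D}_{q,k}f\|_{L^2}\|g\|_{\hat{B}^0}$, while the second is paired against $G(D)\hat{\D}_{q,k}f$ and bounded using the hybrid bound on $\|\hat{\D}_{q,k}f\|_{L^2}$, producing the $\min\{2^{-1},2^{k-2q}\}\|f\|_{\tilde{B}^{0,1}}\|\hat{\D}_{q,k}g\|_{L^2}$ contribution.

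The main obstacle is commuting the nonstandard multiplier $G(D)=|\xi|^m\xi_1^n$ past the paraproduct operators, since $G$ is not polynomial and mixes both frequency scales. The cleanest remedy is to write $G(D)\hat{\D}_{q,k}=2^{nk+mq}\widetilde G_{q,k}(D)$ where $\widetilde G_{q,k}$ has a kernel of uniformly bounded $L^1$ norm under the anisotropic dilation $(\xi_1,\xi_2)\mapsto(2^{-k}\xi_1,2^{-q}\xi_2)$; then all commutator estimates reduce to standard ones, and only careful bookkeeping of the summable sequence $(\alpha_{q,k})$ remains.
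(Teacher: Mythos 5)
Your proposal is correct and follows essentially the same route as the paper's proof: Bony's decomposition in both frequency variables, isolation of the diagonal transport term, integration by parts to transfer the derivative onto $\Dv e$, a Taylor/mean-value commutator argument for the off-diagonal pieces, and the elementary block bound $\|\hat{\D}_{q,k}f\|_{L^2}\lesssim\min\{1,2^{k-2q}\}\|f\|_{\tilde{B}^{0,1}}$ for the hybrid factor. The only imprecision is the intermediate claim $\|\mathcal{R}_{q,k}\|_{L^2}\lesssim\alpha_{q,k}\|\nabla e\|_{L^\infty}\|\hat{\D}_{q,k}f\|_{L^2}$, which is too strong as stated for the pieces $T_{\nabla f}e$ and $R(e,\nabla f)$ (these involve all frequency blocks of $f$, and the summable sequence $\alpha_{q,k}$ requires the full $\|e\|_{\hat{B}^{2}}$ norm rather than $\|\nabla e\|_{L^\infty}$ alone); since your final estimates actually use $\|f\|_{\hat{B}^{0}}$ (resp.\ $\|f\|_{\tilde{B}^{0,1}}$) and $\|e\|_{\hat{B}^{2}}$, the argument nonetheless goes through.
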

We postpone the proof of Lemma \ref{cl} to the Appendix. When the space $\tilde{B}^{0,1}$ is replaced by $\hat{B}^0\cap \hat{B}^1$, a variant version of Lemma \ref{cl} has been verified in \cite{RD} (see Lemma 5.1 there; 
see also \cite{CL} for the case when $e$ is divergence-free). 

The following commutator estimate will be useful in handling energy estimate, and its proof is again postponed to the Appendix.
\begin{Lemma}\label{cl1}
Let $\chi(\xi)$ be a smooth function away from the origin. Then there hold true
\begin{equation}\label{27c}
\|\chi(D)(\u\cdot\nabla f)-\u\cdot\nabla (\chi(D)f)\|_{\tilde{B}^{0,1}}\lesssim \|\chi(D)f\|_{\tilde{B}^{0,1}}\|\u\|_{\hat{B}^1} 
\end{equation}
and
\begin{equation}\label{27d}
\|\chi(D)(\u\cdot\nabla f)-\u\cdot\nabla (\chi(D)f)\|_{\hat{B}^{0}\cap \hat{B}^1}\lesssim \|\chi(D)f\|_{\hat{B}^{0}\cap \hat{B}^1}\|\u\|_{\hat{B}^1}.
\end{equation}
\end{Lemma}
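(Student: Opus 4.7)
The plan is to apply Bony's paraproduct decomposition and then exploit a first-order Taylor expansion of the symbol $\chi(\xi)$ to obtain the gain of one derivative characteristic of commutator estimates. Writing
\begin{equation*}
\u\cdot\nabla f = T_\u\nabla f + T_{\nabla f}\u + R(\u,\nabla f),
\end{equation*}
and subtracting the analogous decomposition of $\u\cdot\nabla(\chi(D)f)$, the commutator splits into
\begin{equation*}
[\chi(D),\,T_\u\nabla]f \;+\; \bigl(\chi(D)T_{\nabla f}\u - T_{\nabla\chi(D)f}\u\bigr) \;+\; \bigl(\chi(D)R(\u,\nabla f) - R(\u,\nabla\chi(D)f)\bigr).
\end{equation*}
The first piece is the only genuinely commutator-type term; the other two are differences of paraproduct/remainder operators in which $\chi(D)$, acting on a frequency-localized block, behaves as a bounded Fourier multiplier and can be absorbed into the norm of $\chi(D)f$.

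For the paraproduct commutator, I would apply the double block $\hat{\D}_{q,k}$ and, using the quasi-orthogonality of the $T_\u\nabla$ decomposition, reduce the contribution at scale $(q,k)$ to finitely many terms of the form $\chi(D)\bigl(S_{q-1,k-1}\u\cdot\nabla\hat{\D}_{q,k}f\bigr) - S_{q-1,k-1}\u\cdot\nabla\chi(D)\hat{\D}_{q,k}f$. Representing $\chi(D)$ by its kernel (obtained from a smooth truncation at the relevant frequency scale, using that $\chi$ is smooth away from the origin) and performing a first-order Taylor expansion in the kernel variable produces the standard gain of one derivative: each such commutator is controlled in $L^2$ by $\|\nabla S_{q-1,k-1}\u\|_{L^\infty}\,\|\chi(D)\hat{\D}_{q,k}f\|_{L^2}$ up to constants independent of $(q,k)$. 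Using the embedding $\hat{B}^1\subset L^\infty$ to bound the Lipschitz norm of $\u$ by $\|\u\|_{\hat{B}^1}$, and then summing the dyadic bounds with the appropriate weights, gives both \eqref{27c} and \eqref{27d}.

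For the two non-commutator pieces, the point is that after localization with $\hat{\D}_{q,k}$ the multiplier $\chi(D)$ only mixes nearby blocks and commutes up to harmless low-frequency correctors with $T_{\nabla f}$ and $R$. The resulting terms are then estimated by Proposition \ref{p2} and Lemma \ref{lee} applied to $\chi(D)f$ (in place of $f$) and $\u\in\hat{B}^1$, which exactly produces the right-hand sides of \eqref{27c} and \eqref{27d}.

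The main obstacle will be \eqref{27c}, where the hybrid norm $\tilde{B}^{0,1}$ uses weight $2^{qs}$ for $k+1\ge 2q$ and weight $2^{(2q-k)t}$ for $k+1<2q$; the commutator output must be shown to be summable in both regimes simultaneously. I expect to handle this by splitting the sum according to the threshold $k+1 = 2q$: in the isotropic regime $k+1\ge 2q$ the estimate reduces to the usual $\hat{B}^0$ commutator bound, whereas in the anisotropic regime $k+1<2q$ the extra weight $2^{2q-k}$ is balanced against the scale of $\chi(D)\hat{\D}_{q,k}f$ appearing in the Taylor remainder, so the same paraproduct argument applies with the weight moved to the $\chi(D)f$ factor rather than the $\u$ factor. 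Once both regimes produce a sequence $(\alpha_{q,k})$ with $\sum_{q,k}\alpha_{q,k}\le 1$, the two estimates follow.
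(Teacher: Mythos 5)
Your core mechanism --- a first-order Taylor expansion of the symbol $\chi$ that converts the difference $\chi(\xi)-\chi(\eta)$ into a factor $\nabla\chi\cdot(\xi-\eta)$ and hence moves one derivative onto $\u$ --- is exactly the engine of the paper's proof. The paper, however, does not pass through Bony's decomposition at all: it writes the Fourier transform of $[\u\cdot\nabla,\chi(D)]f$ as a single explicit double integral, applies the Taylor formula globally, and asserts that the Besov estimates follow from the resulting pointwise bound \eqref{27e}. Your organization is therefore genuinely different in its bookkeeping, and for the piece $[\chi(D),T_{\u}\nabla]f$ --- where the frequency of $\u$ is dominated by that of $f$, so that $|\xi-\eta|$ is genuinely small relative to the point at which $\nabla\chi$ is evaluated --- your kernel/Taylor argument is correct and is the honest version of what the paper leaves implicit.

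The gap is in your dismissal of the other two pieces. In $\chi(D)T_{\nabla f}\u$ the multiplier $\chi(D)$ acts at the output frequency, which is that of $\u$, not that of $f$; for the multipliers this lemma is actually invoked with ($\chi=\lambda_{\pm}^{-1/2}$, of negative order), $\|f\|$ is not controlled by $\|\chi(D)f\|$, so ``absorbing $\chi(D)$ into the norm of $\chi(D)f$'' is not available, and the Taylor expansion buys nothing in this regime because $|\xi-\eta|$ is comparable to $|\xi|$. Worse, even the clean term $T_{\nabla\chi(D)f}\u$ fails the claimed bound in the top norm: Proposition \ref{p2} (necessarily with $s=t=1$) gives $\|\u\cdot\nabla\chi(D)f\|_{\hat{B}^1}\lesssim\|\u\|_{\hat{B}^1}\|\chi(D)f\|_{\hat{B}^2}$, one derivative more on $\chi(D)f$ than \eqref{27d} allows, and estimating $\|S_{j-1}\nabla\chi(D)f\|_{L^\infty}$ by $2^{j}\|\chi(D)f\|_{\hat{B}^1}$ instead merely shifts the loss onto $\u$ and produces $\|\u\|_{\hat{B}^2}$. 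Testing with $g=\chi(D)f$ concentrated at frequency $2^{p}$ with $\|\D_{p}g\|_{L^2}=2^{-p}$ (so $\|g\|_{\hat{B}^0\cap\hat{B}^1}\approx1$ while $\|\nabla g\|_{L^\infty}\approx2^{p}$) against $\u$ at frequency $2^{N}$ with $N\gg 2p$ shows the loss is real and is not cancelled by the companion term $\chi(D)T_{\nabla f}\u$, which is smaller by a factor $2^{p-N}$. So these pieces cannot be closed with only $\|\u\|_{\hat{B}^1}$ on the right; note that where the lemma is used in Section 6 the bound actually carries $\|\nabla\u\|_{L^1(\hat{B}^1)}=\|\u\|_{L^1(\hat{B}^2)}$, which is what the reverse paraproduct and the remainder genuinely require. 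You should either prove the lemma with $\|\nabla\u\|_{\hat{B}^1}$ in place of $\|\u\|_{\hat{B}^1}$ --- which suffices for every application made of it --- or exhibit a cancellation in $\chi(D)T_{\nabla f}\u-T_{\nabla\chi(D)f}\u$ that your current write-up does not supply.
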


\bigskip\bigskip
%%%%%%%%%%%%%%%%%%%%%%%%%%%%%%%%%%%%%%%%%%%%%%%%%%%%%%%%%%%%%%%%%%%%%%%%%%%%%%%%%%%%%%%%%%%%%%%%%%%%%%%%%%%%%%%%%%%%%%%%%%
\section{Dissipation of Velocity}

This section devotes to the dissipation estimate for the velocity $\u$.
For this purpose, we consider the linearized system of \eqref{e2} with convective terms
\begin{equation}\label{81}
\begin{cases}
\partial_tb+\u\cdot\nabla b+\Dv\u=\mathcal{K}\\
\partial_t\u+\u\cdot\nabla\u-\D\u-\partial_{x_1}\H+\nabla(b+\H_1)=\mathcal{L}\\
\partial_t\H+\u\cdot\nabla\H+h_0\Dv\u-h_0\cdot\nabla\u=\mathcal{M},
\end{cases}
\end{equation}
where $h_0=(1,0)^\top$, $$\mathcal{K}=-b\Dv\u,$$ 
\begin{equation*}
 \begin{split}
\mathcal{L}&=\left(\f{1}{\r}-1\right)\Big[h_0\cdot\nabla\H+\D\u-\nabla\H_1\Big]+b\nabla b+\f{1}{\r}\left(\H\cdot\nabla\H-\f12\nabla|\H|^2\right),
 \end{split}
\end{equation*}
and
$$\mathcal{M}=\H\cdot\nabla\u-\H\Dv\u.$$
Convective terms are kept in \eqref{81} because no matter how smooth solutions $(b,\u,\H)$ are, convective terms, such as $\u\cdot\nabla b$ and $\u\cdot\nabla\H$, 
always lose one derivative and hence it is hard to treat them as external terms.

We begin with diagonalizing the system \eqref{81}. Set $$\mathfrak{H}=\nabla(b+\H_1)-\partial_{x_1}\H.$$ Equations \eqref{81} give
\begin{equation}\label{81a}
 \begin{split}
\partial_t\mathfrak{H}+Q(D)\u=\nabla(\mathcal{K}+\mathcal{M}_1)-\partial_{x_1}\mathcal{M}-\nabla\Big(\u\cdot\nabla(b+\H_1)\Big)+\partial_{x_1}(\u\cdot\nabla\H).
 \end{split}
\end{equation}
Recall that $\mathcal{P}$ denotes the differential operator with the symbol $\mathcal{P}(\xi)$. Combining \eqref{81} and \eqref{81a} together gives
\begin{subequations}\label{81b}
 \begin{align}
&\partial_t\mathcal{P}\u+\u\cdot\nabla\mathcal{P}\u-\D\mathcal{P}\u+\mathcal{P}\mathcal{H}=\mathcal{O}\label{81ba}\\
&\partial_t\mathcal{H}+\u\cdot\nabla\mathcal{H}-\textrm{diag}(\lambda_{-}(D),\lambda_{+}(D))\mathcal{P}\u=\mathcal{Q}, \label{81bb} 
 \end{align}
\end{subequations}
where
$$\mathcal{O}\overset{def}=-\mathcal{P}(\u\cdot\nabla\u)+\u\cdot\nabla\mathcal{P}\u+\mathcal{P}\mathcal{L}$$ and
$$\mathcal{Q}\overset{def}=\mathcal{P}\Big(\nabla(\mathcal{K}+\mathcal{M}_1)-\partial_{x_1}\mathcal{M}\Big)
+\u\cdot\nabla\mathcal{H}-\mathcal{P}\Big[\nabla\Big(\u\cdot\nabla(b+\H_1)\Big)-\partial_{x_1}(\u\cdot\nabla\H)\Big].$$

Set
$$\mathcal{P}\u=(u,w).$$
Note that Plancherel's identity implies
$$\|\mathcal{P}\u\|_{L^2}=\|\u\|_{L^2}\quad\textrm{and}\quad \|\mathfrak{H}\|_{L^2}=\|\mathcal{H}\|_{L^2}.$$

\subsection{Dissipation of $u$}  
According to \eqref{81b}, $(u,\mathcal{H}_1)$ satisfies
\begin{subequations}\label{e4}
 \begin{align}
&\partial_tu+\u\cdot\nabla u-\D u+\mathcal{H}_1=\mathcal{O}_1\label{e4a}\\
&\partial_t\mathcal{H}_1+\u\cdot\nabla\mathcal{H}_1-\lambda_{-}(D)u=\mathcal{Q}_1.\label{e4b}
 \end{align}
\end{subequations}
Since $\lambda_{-}(\xi)\approx \xi_1^2$, the system \eqref{e4} shows a similar linear sturcture as the incompressible MHD (cf. \cite{HW2}), and this observation suggests us to decompose the frequencies
 into two parts: $k+1\ge 2q$ and $k+1<2q$. Note that the decomposition
of the frequence depends on the competition between the hyperbolicity and the parabolicity.

For the system \eqref{e4}, we have the following dissipation estimate for $u$.
\begin{Proposition}\label{p00}
For solutions $(u, \mathcal{H}_1)$ of \eqref{e4}, it holds true
\begin{equation}\label{82}
\begin{split}
&\|u\|_{L^\infty(\hat{B}^0)}+\|\lambda_{-}(D)^{-\f12}\mathcal{H}_1\|_{L^\infty(\tilde{B}^{0,1})}+\|u\|_{L^1(\hat{B}^2)}\\
&\qquad+\int_0^t\sum_{k+1\ge 2q}2^{2q}\|\lambda_{-}(D)^{-\f12}\hat{\D}_{q,k}\mathcal{H}_1\|_{L^2}ds+\int_0^t\sum_{k+1<2q}\|\hat{\D}_{q,k}\mathcal{H}_1\|_{L^2}ds\\
&\quad\le C\Big(\|u(0)\|_{\hat{B}^0}+\|\lambda_{-}(D)^{-\f12}\mathcal{H}_1(0)\|_{\tilde{B}^{0,1}}+\|\lambda_{-}(D)^{-\f12}\mathcal{Q}_1\|_{L^1(\tilde{B}^{0,1})}+\|\mathcal{O}_1\|_{L^1(\hat{B}^{0})}\\
&\qquad+\|\u\|_{L^1(\hat{B}^2)}(\|\lambda_{-}(D)^{-\f12}\mathcal{H}_1\|_{L^\infty(\tilde{B}^{0,1})}+\|u\|_{L^\infty(\hat{B}^0)})\Big).
\end{split}
\end{equation}
\end{Proposition}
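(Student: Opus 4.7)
The plan is to frequency-localize \eqref{e4} and run a Lyapunov-type energy estimate on each dyadic block. Setting $U_{q,k}=\hat{\D}_{q,k}u$ and $V_{q,k}=\lambda_{-}^{-\f12}(D)\hat{\D}_{q,k}\mathcal{H}_1$, the couple $(U_{q,k},V_{q,k})$ satisfies
\begin{align*}
&\partial_tU_{q,k}+\u\cd\nabla U_{q,k}-\D U_{q,k}+\lambda_-^{\f12}(D)V_{q,k}=\hat{\D}_{q,k}\mathcal{O}_1+\mathcal{R}^u_{q,k},\\
&\partial_tV_{q,k}+\u\cd\nabla V_{q,k}-\lambda_-^{\f12}(D)U_{q,k}=\lambda_-^{-\f12}(D)\hat{\D}_{q,k}\mathcal{Q}_1+\mathcal{R}^{\mathcal{H}}_{q,k},
\end{align*}
where $\mathcal{R}^u_{q,k}$ and $\mathcal{R}^{\mathcal{H}}_{q,k}$ are the commutators of $\u\cd\nabla$ with $\hat{\D}_{q,k}$ and with $\lambda_-^{-\f12}(D)\hat{\D}_{q,k}$, to be controlled by Lemma \ref{cl} (the variants \eqref{25b}, \eqref{25}, \eqref{26}) and by \eqref{27c} of Lemma \ref{cl1}. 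The linear coupling through $\lambda_{-}^{\f12}(D)$ is skew-symmetric in $L^2$, so a bare $L^2$ energy estimate returns only the parabolic dissipation $\|\nabla U_{q,k}\|_{L^2}^2$, while the transport term integrates by parts against the harmless factor $\|\Dv\u\|_{L^\infty}\lesssim\|\u\|_{\hat{B}^2}$.

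To squeeze out the missing dissipation for $V_{q,k}$, I would work with the Lyapunov functional
$$L_{q,k}^2=\|U_{q,k}\|_{L^2}^2+\|V_{q,k}\|_{L^2}^2+2\eta_{q,k}\bigl(U_{q,k}\,\big|\,V_{q,k}\bigr),$$
with $\eta_{q,k}$ small enough that $L_{q,k}^2\sim\|U_{q,k}\|_{L^2}^2+\|V_{q,k}\|_{L^2}^2$ and regime-dependent so that the induced dissipation $\eta_{q,k}\|\lambda_-^{\f14}(D)V_{q,k}\|_{L^2}^2$ survives after absorbing the parasite terms $\eta_{q,k}\|\lambda_-^{\f14}(D)U_{q,k}\|_{L^2}^2$ and $\eta_{q,k}|(\nabla U_{q,k}|\nabla V_{q,k})|$ produced by $\f{d}{dt}(U_{q,k}|V_{q,k})$. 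On the block $|\xi|\sim 2^q$, $|\xi_1|\sim 2^k$, with $k\le q$, one has $\lambda_-(\xi)\sim\xi_1^2\sim 2^{2k}$; the choice $\eta_{q,k}=\delta\cdot 2^{-|2q-k|}$ for a universal small $\delta>0$ absorbs both parasites into $\|\nabla U_{q,k}\|_{L^2}^2$ and a fraction of $\eta_{q,k}\|\lambda_-^{\f14}(D)V_{q,k}\|_{L^2}^2$, and leads to
$$\f{d}{dt}L_{q,k}^2+c\,\delta_{q,k}L_{q,k}^2+c\,\|\nabla U_{q,k}\|_{L^2}^2\lesssim L_{q,k}\bigl(\|\hat{\D}_{q,k}\mathcal{O}_1\|_{L^2}+\|\lambda_-^{-\f12}(D)\hat{\D}_{q,k}\mathcal{Q}_1\|_{L^2}\bigr)+(\text{commutator}),$$
with $\delta_{q,k}\sim 2^{2q}$ when $k+1\ge 2q$ and $\delta_{q,k}\sim 2^{2k-2q}$ when $k+1<2q$, precisely the slowest real parts of the roots of $\sigma^2+|\xi|^2\sigma+\lambda_-(\xi)=0$ on the two subregions.

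Dividing by $L_{q,k}$ and integrating in $s\in(0,t)$ produces an $L^\infty_t$ bound on $L_{q,k}$ and an $L^1_t$ dissipation weighted by $\delta_{q,k}$; summing over $(q,k)$ with the $\hat{B}^0$ weight for $U_{q,k}$ and the $\tilde{B}^{0,1}$ weight for $V_{q,k}$, with the sum on $V_{q,k}$ split exactly along $k+1\gtrless 2q$, reassembles the two dissipation integrals on the left of \eqref{82}. The $L^1(\hat{B}^2)$ bound on $u$ is obtained in parallel from the $\|U_{q,k}\|_{L^2}^2$ energy inequality by the Danchin device of dividing by $\|U_{q,k}\|_{L^2}$ and using the Bernstein lower bound $\|\nabla U_{q,k}\|_{L^2}\gtrsim 2^q\|U_{q,k}\|_{L^2}$; the cross term $(\lambda_-^{\f12}(D)V_{q,k}|U_{q,k})$ is bounded by $2^k\|U_{q,k}\|\|V_{q,k}\|$ and, after summation, by the $L^1_t$ $\mathcal{H}_1$-dissipation already controlled. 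Finally, the commutator remainders produce the bilinear factor $\|\u\|_{L^1(\hat{B}^2)}(\|u\|_{L^\infty(\hat{B}^0)}+\|\lambda_-^{-\f12}(D)\mathcal{H}_1\|_{L^\infty(\tilde{B}^{0,1})})$ via Lemmas \ref{cl}--\ref{cl1}, the $\min\{2^{-1},2^{k-2q}\}$ factor of \eqref{25} matching exactly the hybrid weight of $\tilde{B}^{0,1}$.

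The main obstacle is the calibration of $\eta_{q,k}$ in the degenerate parabolic regime $k+1<2q$: the Cauchy--Schwarz residual $\eta_{q,k}^2\,2^{2q}\|V_{q,k}\|_{L^2}^2$ coming from $\eta_{q,k}|(\nabla U_{q,k}|\nabla V_{q,k})|$ must be dominated by the available $\eta_{q,k}\,2^k\|V_{q,k}\|_{L^2}^2\sim\eta_{q,k}\|\lambda_-^{\f14}(D)V_{q,k}\|_{L^2}^2$, forcing $\eta_{q,k}\lesssim 2^{k-2q}$ and thereby the slow decay rate $2^{2k-2q}$; a naive constant $\eta$ would either destroy the $V$-dissipation or break the coercivity of $L_{q,k}^2$. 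It is exactly this balance that singles out the two-regime splitting of $\tilde{B}^{0,1}$ and identifies $\tilde{B}^{0,1}$ as the natural functional setting for $\lambda_-^{-\f12}(D)\mathcal{H}_1$.
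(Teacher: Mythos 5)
Your overall architecture matches the paper's: frequency localization by $\hat{\D}_{q,k}$, a two-regime split along $k+1\gtrless 2q$, a cross-term Lyapunov functional whose calibration produces the decay rates $2^{2q}$ and $2^{2k-2q}$, Danchin's smoothing-effect device for $\|u\|_{L^1(\hat{B}^2)}$, and Lemmas \ref{cl}--\ref{cl1} for the convection commutators. The regime $k+1\ge 2q$ is handled essentially as in the paper (your $\eta_{q,k}=\delta 2^{2q-k}$ applied to $(U_{q,k}|V_{q,k})$ is the paper's $\iota 2^{2q-2k+1}(\hat{\D}_{q,k}u|\hat{\D}_{q,k}\mathcal{H}_1)$ in disguise), and the dissipation calibration you describe in the degenerate regime is correct as far as coercivity and decay rates go.

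However, there is a genuine gap in the degenerate regime $k+1<2q$, and it lies in the bookkeeping of the external force $\mathcal{O}_1$, not in the dissipation. Your functional $L_{q,k}^2=\|U_{q,k}\|_{L^2}^2+\|V_{q,k}\|_{L^2}^2+2\eta_{q,k}(U_{q,k}|V_{q,k})$ weights $U_{q,k}$ and $V_{q,k}$ equally, so the scalar differential inequality you obtain carries the source $\|\hat{\D}_{q,k}\mathcal{O}_1\|_{L^2}$ with unit weight. But the left side of \eqref{82} requires the block $V_{q,k}$ with the $\tilde{B}^{0,1}$ weight $2^{2q-k}$ (both for the $L^\infty_t$ bound and to convert the dissipation rate $2^{2k-2q}\|V_{q,k}\|_{L^2}\approx 2^{k-2q}\|\hat{\D}_{q,k}\mathcal{H}_1\|_{L^2}$ into the unweighted $L^1_t$ integral of $\|\hat{\D}_{q,k}\mathcal{H}_1\|_{L^2}$). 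Since $L_{q,k}$ is a single scalar per block, you cannot ``sum with the $\hat{B}^0$ weight for $U$ and the $\tilde{B}^{0,1}$ weight for $V$'': you must multiply the whole inequality by $2^{2q-k}$, which leaves $\sum_{k+1<2q}2^{2q-k}\|\hat{\D}_{q,k}\mathcal{O}_1\|_{L^1_t(L^2)}$ on the right. This is a strictly stronger norm than $\|\mathcal{O}_1\|_{L^1(\hat{B}^0)}$ (the factor $2^{2q-k}$ is unbounded on that region), so the stated conclusion \eqref{82} does not follow. The paper circumvents exactly this by building the Case-2 functional asymmetrically: it applies $\Lambda^{-2}\lambda_{-}(D)$ to the $u$-equation first, so that $f_{q,k}^2\approx 2^{2(2k-2q)}\|\hat{\D}_{q,k}u\|_{L^2}^2+\|\hat{\D}_{q,k}\mathcal{H}_1\|_{L^2}^2$; the pairing with $\mathcal{O}_1$ then automatically carries the factor $2^{2k-2q}$, which cancels the subsequent multiplication by $2^{2q-2k}$ and returns exactly $\|\mathcal{O}_1\|_{\hat{B}^0}$. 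Your scheme can be repaired by the same down-weighting of $U_{q,k}$ (replace $\|U_{q,k}\|_{L^2}^2$ by $2^{2(2k-2q)}\|U_{q,k}\|_{L^2}^2$ in the degenerate regime and recheck the cross-term absorption), but as written the $\mathcal{O}_1$ contribution is not controlled.
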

\begin{proof} We split the proof into two steps.

\texttt{Step 1: Energy Estimates.}
Applying the Littlewood-Paley decomposition $\hat{\D}_{q,k}$ to \eqref{e4}, one has
\begin{subequations}\label{83}
 \begin{align}
&\partial_t\hat{\D}_{q,k}u+\hat{\D}_{q,k}(\u\cdot\nabla u)-\D\hat{\D}_{q,k}u+\hat{\D}_{q,k}\mathcal{H}_1=\hat{\D}_{q,k}\mathcal{O}_1\label{83b}\\
&\partial_t\hat{\D}_{q,k}\mathcal{H}_1+\hat{\D}_{q,k}\u\cdot\nabla\mathcal{H}_1-\lambda_{-}(D)\hat{\D}_{q,k}u=\hat{\D}_{q,k}\mathcal{Q}_1.\label{83c}
 \end{align}
\end{subequations}
We consider two cases according to the frequence.

\textit{Case 1: $k+1\ge 2q$.} 

For $\iota>0$, define
\begin{equation*}
\begin{split}
f_{q,k}^2&=\|\hat{\D}_{q,k} u\|_{L^2}^2+\|\lambda_{-}(D)^{-\f12}\hat{\D}_{q,k}\mathcal{H}_1\|_{L^2}^2+\iota 2^{2q-2k+1}(\hat{\D}_{q,k}u|\hat{\D}_{q,k}\mathcal{H}_1).
\end{split}
\end{equation*}

Taking the $L^2$ product of \eqref{83b} with $\hat{\D}_{q,k}u$ gives
\begin{equation}\label{84b}
 \begin{split}
&\f12\f{d}{dt}\|\hat{\D}_{q,k}u\|_{L^2}^2+(\hat{\D}_{q,k}(\u\cdot\nabla u)|\hat{\D}_{q,k}u)+(\hat{\D}_{q,k}\mathcal{H}_1|\hat{\D}_{q,k}u)
+\|\Lambda\hat{\D}_{q,k}u\|_{L^2}^2=(\hat{\D}_{q,k}\mathcal{O}_1|\hat{\D}_{q,k}u).
 \end{split}
\end{equation}
Applying the operator $\lambda_{-}(D)^{-\f12}$ to \eqref{83c} and taking the $L^2$ product of the resulting equation with $\lambda_{-}(D)^{-\f12}\hat{\D}_{q,k}\mathcal{H}_1$ gives
\begin{equation}\label{84c}
 \begin{split}
&\f12\f{d}{dt}\|\lambda_{-}(D)^{-\f12}\hat{\D}_{q,k}\mathcal{H}_1\|_{L^2}^2+(\lambda_{-}(D)^{-\f12}\hat{\D}_{q,k}(\u\cdot\nabla\mathcal{H}_1)|\lambda_{-}(D)^{-\f12}\hat{\D}_{q,k}\mathcal{H}_1)\\
&\quad-(\hat{\D}_{q,k}u|\hat{\D}_{q,k}\mathcal{H}_1)=(\lambda_{-}(D)^{-\f12}\hat{\D}_{q,k}\mathcal{Q}_1|\lambda_{-}(D)^{-\f12}\hat{\D}_{q,k}\mathcal{H}_1).
 \end{split}
\end{equation}

For the cross term $(\hat{\D}_{q,k}u|\hat{\D}_{q,k}\mathcal{H}_1)$, multiplying \eqref{83c} and \eqref{83b} by $\hat{\D}_{q,k}u$ and $\hat{\D}_{q,k}\partial_{x_1}\mathcal{H}_1$ respectively and then summing together, one gets
\begin{equation}\label{85}
 \begin{split}
&\f{d}{dt}(\hat{\D}_{q,k}u|\hat{\D}_{q,k}\mathcal{H}_1)+(\hat{\D}_{q,k}(\u\cdot\nabla u)|\hat{\D}_{q,k}\mathcal{H}_1)+(\hat{\D}_{q,k}(\u\cdot\nabla\mathcal{H}_1)|\hat{\D}_{q,k}u)\\
&\qquad+(\Lambda^2\hat{\D}_{q,k}u|\hat{\D}_{q,k}\mathcal{H}_1)+\|\hat{\D}_{q,k}\mathcal{H}_1\|_{L^2}^2-(\lambda_{-}(D)\hat{\D}_{q,k}u|\hat{\D}_{q,k}u)\\
&\quad=(\hat{\D}_{q,k}\mathcal{O}_1|\hat{\D}_{q,k}\mathcal{H}_1)+(\hat{\D}_{q,k}\mathcal{Q}_1|\hat{\D}_{q,k}u).
 \end{split}
\end{equation}

A linear combination of \eqref{84b}-\eqref{85} gives
\begin{equation}\label{87}
 \begin{split}
&\f12\f{d}{dt}f_q^2+(1-\iota)\|\Lambda\hat{\D}_{q,k}u\|_{L^2}^2+\iota\|\Lambda\lambda_{-}(D)^{-\f12}\hat{\D}_{q,k} \mathcal{H}_1\|_{L^2}^2+\iota2^{2q-2k}(\Lambda^2\hat{\D}_{q,k}u|\hat{\D}_{q,k}\mathcal{H}_1)\\
&\quad\approx \mathcal{X}_{q,k}
 \end{split}
\end{equation}
with
\begin{equation*}
 \begin{split}
\mathcal{X}_{q,k}&=(\hat{\D}_{q,k}\mathcal{O}_1|\hat{\D}_{q,k}u)+(\lambda_{-}(D)^{-\f12}\hat{\D}_{q,k}\mathcal{Q}_1|\lambda_{-}(D)^{-\f12}\hat{\D}_{q,k}\mathcal{H}_1)-(\hat{\D}_{q,k}(\u\cdot\nabla u)|\hat{\D}_{q,k}u)\\
&\quad-(\lambda_{-}(D)^{-\f12}\hat{\D}_{q,k}(\u\cdot\nabla\mathcal{H}_1)|\lambda_{-}(D)^{-\f12}\hat{\D}_{q,k}\mathcal{H}_1)+\iota2^{2q-2k}\Big((\hat{\D}_{q,k}\mathcal{O}_1|\hat{\D}_{q,k}\mathcal{H}_1)\\
&\quad+(\hat{\D}_{q,k}\mathcal{Q}_1|\hat{\D}_{q,k}u)-(\hat{\D}_{q,k}(\u\cdot\nabla u)|\hat{\D}_{q,k}\mathcal{H}_1)-(\hat{\D}_{q,k}(\u\cdot\nabla\mathcal{H}_1)|\hat{\D}_{q,k}u)\Big).
 \end{split}
\end{equation*}

Note that
$$f_{q,k}^2\approx \|\hat{\D}_{q,k}u\|_{L^2}^2+\|\lambda_{-}(D)^{-\f12}\hat{\D}_{q,k}\mathcal{H}_1\|_{L^2}^2$$if $\iota$ is chosen to be sufficiently small.

Since $k+1\ge 2q$, $q\lesssim 1$. Using Bernstein's inequality and the Cauchy-Swartz inequality, one has 
\begin{equation*}
 \begin{split}
&(1-\iota)\|\Lambda\hat{\D}_{q,k}u\|_{L^2}^2+\iota\|\Lambda\lambda_{-}(D)^{-\f12}\hat{\D}_{q,k} \mathcal{H}_1\|_{L^2}^2+\iota2^{2q-2k}(\Lambda^2\hat{\D}_{q,k}u|\hat{\D}_{q,k}\mathcal{H}_1)\\
&\quad\approx \|\Lambda\hat{\D}_{q,k} u\|_{L^2}^2+\|\Lambda\lambda_{-}(D)^{-\f12}\hat{\D}_{q,k} \mathcal{H}_1\|_{L^2}^2\\
&\quad\ge C2^{2q}\Big(\|\hat{\D}_{q,k} u\|_{L^2}^2+\|\lambda_{-}(D)^{-\f12}\hat{\D}_{q,k} \mathcal{H}_1\|_{L^2}^2\Big).
 \end{split}
\end{equation*}

Since $k+1\ge 2q$, one deduces from Lemma \ref{cl} that
\begin{equation*}
\begin{split}
|\mathcal{X}_{q,k}|&\le Cf_{q,k}\Big(\|\hat{\D}_{q,k}\mathcal{O}_1\|_{L^2}+\|\lambda_{-}(D)^{-\f12}\hat{\D}_{q,k}\mathcal{Q}_1\|_{L^2}\\
&\quad+\alpha_{q,k}\|\u\|_{\hat{B}^2}(\|\lambda_{-}(D)^{-\f12}\mathcal{H}_1\|_{\tilde{B}^{0,1}}+\|u\|_{\hat{B}^0})\Big),
\end{split}
\end{equation*} 
and hence it follows from \eqref{87} that there is a positive constant $\kappa$ such that
\begin{equation}\label{89}
\begin{split}
\f{d}{dt}f_{q,k}+\kappa 2^{2q} f_{q,k}&\le Cf_{q,k}\Big(\|\hat{\D}_{q,k}\mathcal{O}_1\|_{L^2}+\|\lambda_{-}(D)^{-\f12}\hat{\D}_{q,k}\mathcal{Q}_1\|_{L^2}\\
&\quad+\alpha_{q,k}\|\u\|_{\hat{B}^2}(\|\lambda_{-}(D)^{-\f12}\mathcal{H}_1\|_{\tilde{B}^{0,1}}+\|u\|_{\hat{B}^0})\Big).
\end{split}
\end{equation}
Summing \eqref{89} over $q, k\in\mathbb{Z}$ and integrating over $t\in \R$, one obtains \eqref{82}.

\textit{Case 2: $k+1<2q$.} 

We define
\begin{equation*}
 \begin{split}
f_{q, k}^2&=2\|\Lambda^{-2}\lambda_{-}(D)\hat{\D}_{q,k}u\|_{L^2}^2+\|\hat{\D}_{q,k}\mathcal{H}_1\|_{L^2}^2+2(\Lambda^{-2}\lambda_{-}(D)\hat{\D}_{q,k}u|\hat{\D}_{q,k}\mathcal{H}_1).
 \end{split}
\end{equation*}

Applying the operator $\Lambda^{-2}\lambda_{-}(D)$ to \eqref{83b}, and then taking the $L^2$ product of the resulting equation with $\Lambda^{-2}\lambda_{-}(D)\hat{\D}_{q,k}u$, one gets
\begin{equation}\label{810a}
 \begin{split}
&\f12\f{d}{dt}\|\Lambda^{-2}\lambda_{-}(D)\hat{\D}_{q,k}u\|_{L^2}^2+(\Lambda^{-2}\lambda_{-}(D)\hat{\D}_{q,k}(\u\cdot\nabla u)|\Lambda^{-2}\lambda_{-}(D)\hat{\D}_{q,k}u)\\
&\qquad+\|\Lambda\Lambda^{-2}\lambda_{-}(D)\hat{\D}_{q,k}u\|_{L^2}^2+(\Lambda^{-2}\lambda_{-}(D)\hat{\D}_{q,k}\mathcal{H}_1|\Lambda^{-2}\lambda_{-}(D)\hat{\D}_{q,k}u)\\
&\quad=(\Lambda^{-2}\lambda_{-}(D)\hat{\D}_{q,k}\mathcal{O}_1|\Lambda^{-2}\lambda_{-}(D)\hat{\D}_{q,k}u).  
 \end{split}
\end{equation}
Taking the $L^2$ product of \eqref{83c} with $\hat{\D}_{q,k}\mathcal{H}_1$, one gets
\begin{equation}\label{810b}
\begin{split}
 &\f12\f{d}{dt}\|\hat{\D}_{q,k}\mathcal{H}_1\|_{L^2}^2+(\hat{\D}_{q,k}(\u\cdot\nabla \mathcal{H}_1))|\hat{\D}_{q,k}\mathcal{H}_1)
-(\lambda_{-}(D)\hat{\D}_{q,k}u|\hat{\D}_{q,k}\mathcal{H}_1)=(\hat{\D}_{q,k}\mathcal{Q}_1|\hat{\D}_{q,k}\mathcal{H}_1).
\end{split}
\end{equation}
For the cross term $(\Lambda^{-2}\lambda_{-}(D)\hat{\D}_{q,k}u|\hat{\D}_{q,k}\mathcal{H}_1)$, applying the operator $\Lambda^{-2}\lambda_{-}(D)$ to \eqref{83b}, 
and then taking the $L^2$ product of the resulting equation and \eqref{83c} with $\hat{\D}_{q,k}\mathcal{H}_1$ and $\Lambda^{-2}\lambda_{-}(D)\hat{\D}_{q,k}u$ respectively, one obtains
\begin{equation}\label{810c}
 \begin{split}
&\f{d}{dt}(\Lambda^{-2}\lambda_{-}(D)\hat{\D}_{q,k}u|\hat{\D}_{q,k}\mathcal{H}_1)+(\Lambda^{-2}\lambda_{-}(D)\hat{\D}_{q,k}(\u\cdot\nabla u)|\hat{\D}_{q,k}\mathcal{H}_1)\\
&\qquad+(\hat{\D}_{q,k}(\u\cdot\nabla\mathcal{H}_1)|\Lambda^{-2}\lambda_{-}(D)\hat{\D}_{q,k}u)+(\lambda_{-}(D)\hat{\D}_{q,k}u|\hat{\D}_{q,k}\mathcal{H}_1)\\
&\qquad+\|\Lambda^{-1}\lambda_{-}(D)^{\f12}\hat{\D}_{q,k}\mathcal{H}_1\|_{L^2}^2
-\|\Lambda^{-1}\lambda_{-}(D)\hat{\D}_{q,k}u\|_{L^2}^2\\
&\quad=(\Lambda^{-2}\lambda_{-}(D)\hat{\D}_{q,k}\mathcal{O}_1|\hat{\D}_{q,k}\mathcal{H}_1)+(\hat{\D}_{q,k}\mathcal{Q}_1|\Lambda^{-2}\lambda_{-}(D)\hat{\D}_{q,k}u),
 \end{split}
\end{equation}

A linear combination of \eqref{810a}, \eqref{810b} and \eqref{810c} gives
\begin{equation}\label{811}
\begin{split}
&\f12\f{d}{dt}f_q^2+\|\Lambda\Lambda^{-2}\lambda_{-}(D)\hat{\D}_{q,k}u\|_{L^2}^2+\|\Lambda^{-1}\lambda_{-}(D)^{\f12}\hat{\D}_{q,k}\mathcal{H}_1\|_{L^2}^2
\\
&\quad+2(\Lambda^{-2}\lambda_{-}(D)\hat{\D}_{q,k}\mathcal{H}_1|\Lambda^{-2}\lambda_{-}(D)\hat{\D}_{q,k}u)\\
&=\mathcal{Y}_{q,k}
\end{split}
\end{equation}
with
\begin{equation*}
 \begin{split}
\mathcal{Y}_{q,k}&\overset{def}=2(\Lambda^{-2}\lambda_{-}(D)\hat{\D}_{q,k}\mathcal{O}_1|\Lambda^{-2}\lambda_{-}(D)\hat{\D}_{q,k}u)+(\hat{\D}_{q,k}\mathcal{Q}_1|\hat{\D}_{q,k}\mathcal{H}_1)\\
&\quad+2(\Lambda^{-2}\lambda_{-}(D)\hat{\D}_{q,k}(\u\cdot\nabla u)|\Lambda^{-2}\lambda_{-}(D)\hat{\D}_{q,k}u)+(\hat{\D}_{q,k}(\u\cdot\nabla \mathcal{H}_1))|\hat{\D}_{q,k}\mathcal{H}_1)\\
&\quad+(\Lambda^{-2}\lambda_{-}(D)\hat{\D}_{q,k}\mathcal{O}_1|\hat{\D}_{q,k}\mathcal{H}_1)+(\hat{\D}_{q,k}\mathcal{Q}_1|\Lambda^{-2}\lambda_{-}(D)\hat{\D}_{q,k}u)\\
&\quad-(\Lambda^{-2}\lambda_{-}(D)\hat{\D}_{q,k}(\u\cdot\nabla u)|\hat{\D}_{q,k}\mathcal{H}_1)-(\hat{\D}_{q,k}(\u\cdot\nabla\mathcal{H}_1)|\Lambda^{-2}\lambda_{-}(D)\hat{\D}_{q,k}u).
 \end{split}
\end{equation*}

Note that 
\begin{equation}\label{a12a}
f_{q,k}^2\approx \|\Lambda^{-2}\lambda_{-}(D)\hat{\D}_{q,k}u\|_{L^2}^2+\|\hat{\D}_{q,k}\mathcal{H}_1\|_{L^2}^2.
\end{equation} 
Since $k+1<2q$, it holds
\begin{equation*}
 \begin{split}
&2|(\Lambda^{-2}\lambda_{-}(D)\hat{\D}_{q,k}\mathcal{H}_1|\Lambda^{-2}\lambda_{-}(D)\hat{\D}_{q,k}u)|\\
&\quad\le \f12\Big(
\|\Lambda\Lambda^{-2}\lambda_{-}(D)\hat{\D}_{q,k}u\|_{L^2}^2+\|\Lambda^{-1}\lambda_{-}(D)^{\f12}\hat{\D}_{q,k}\mathcal{H}_1\|_{L^2}^2\Big), 
 \end{split}
\end{equation*}
and hence
\begin{equation*}
 \begin{split}
&\|\Lambda\Lambda^{-2}\lambda_{-}(D)\hat{\D}_{q,k}u\|_{L^2}^2+\|\Lambda^{-1}\lambda_{-}(D)^{\f12}\hat{\D}_{q,k}\mathcal{H}_1\|_{L^2}^2
+2(\Lambda^{-2}\lambda_{-}(D)\hat{\D}_{q,k}\mathcal{H}_1|\Lambda^{-2}\lambda_{-}(D)\hat{\D}_{q,k}u)\\
&\quad\approx \|\Lambda\Lambda^{-2}\lambda_{-}(D)\hat{\D}_{q,k}u\|_{L^2}^2+\|\Lambda^{-1}\lambda_{-}(D)^{\f12}\hat{\D}_{q,k}\mathcal{H}_1\|_{L^2}^2\\
&\quad\ge C2^{2k-2q}\Big(\|\Lambda^{-2}\lambda_{-}(D)\hat{\D}_{q,k}u\|_{L^2}^2+\|\hat{\D}_{q,k}\mathcal{H}_1\|_{L^2}^2\Big).  
 \end{split}
\end{equation*}

On the other hand, one deduces from Lemma \ref{cl} that
\begin{equation}\label{a12}
\begin{split}
&|\mathcal{Y}_{q,k}|\\
&\le Cf_{q,k}\Big(\|\hat{\D}_{q,k}\mathcal{O}_1\|_{L^2}+\|\hat{\D}_{q,k}\mathcal{Q}_1\|_{L^2}+\alpha_{q,k}2^{2k-2q}\|\u\|_{\hat{B}^2}(\|\lambda_{-}(D)^{-\f12}\mathcal{H}_1\|_{\tilde{B}^{0,1}}+\|u\|_{\hat{B}^0})\Big).
\end{split}
\end{equation}

Combining \eqref{811} and \eqref{a12}, one has
\begin{equation}\label{a13}
 \begin{split}
\f{d}{dt}f_{q,k}+\kappa 2^{2k-2q} f_{q,k}&\lesssim \|\hat{\D}_{q,k}\mathcal{O}_1\|_{L^2}+\|\hat{\D}_{q,k}\mathcal{Q}_1\|_{L^2}+\alpha_{q,k}2^{2q-2k}\|\u\|_{\hat{B}^2}(\|\mathcal{H}_1\|_{\tilde{B}^{0,1}}+\|u\|_{\hat{B}^0})
 \end{split}
\end{equation} for some positive constant $\kappa$.

Multiplying \eqref{a13} by $2^{2q-2k}$, summing the resulting equation over $q,k\in\mathbb{Z}$ and integrating over $t\in \R$, it follows
\begin{equation}\label{a15}
\begin{split}
&\sup_{0\le \tau<\infty}\sum_{k+1< 2q}\Big(\|\hat{\D}_{q,k}u(\tau)\|_{L^2}+2^{2q-k}\|\lambda_{-}(D)^{-\f12}\hat{\D}_{q,k}\mathcal{H}_1(\tau)\|_{L^2}\Big)+\sum_{k+1<2q}\|\hat{\D}_{q,k}\mathcal{H}_1\|_{L^1(L^2)}\\
&\quad\le C\Big(\|u(0)\|_{\hat{B}^0}+\|\lambda_{-}(D)^{-\f12}\mathcal{H}_1(0)\|_{\tilde{B}^{0,1}}+\|\mathcal{O}_1\|_{L^1(\hat{B}^0)}+\|\lambda_{-}(D)^{-\f12}\mathcal{Q}_1\|_{L^1(\tilde{B}^{0,1})}\\
&\qquad+\|\u\|_{L^1(\hat{B}^2)}(\|u\|_{L^\infty(\hat{B}^0)}+\|\lambda_{-}(D)^{-\f12}\mathcal{H}_1\|_{L^\infty(\tilde{B}^{0,1})})\Big).
\end{split}
\end{equation}

\texttt{Step 2: Smoothing Effect.} We are going to use the parabolicity of $u$ to improve the dissipation of $u$. According to \eqref{89}, it is only left to handle the case $k+1<2q$.
Going back to the equation \eqref{83b}, and regarding now the term $\mathcal{H}_1$ as an external term, one has
\begin{equation*}
\begin{split}
&\f12\f{d}{dt}\|\hat{\D}_{q,k}u\|_{L^2}^2+(\hat{\D}_{q,k}(\u\cdot\nabla u)|\hat{\D}_{q,k}u)+2^{2q}\|\hat{\D}_{q,k}u\|_{L^2}\\
&\quad\le C\Big(\|\hat{\D}_{q,k}\mathcal{H}_1\|_{L^2}+\|\hat{\D}_{q,k}\mathcal{O}_1\|_{L^2}\Big)\|\hat{\D}_{q,k}u\|_{L^2}, 
\end{split}
\end{equation*}
and hence
\begin{equation}\label{a16}
\begin{split}
\sum_{k+1<2q}2^{2q}\|\hat{\D}_{q,k}u\|_{L^1(L^2)}&\le C\Big(\|u(0)\|_{\hat{B}^0}+\|\mathcal{L}_1\|_{L^1(\hat{B}^0)}+\|\u\|_{\hat{B}^2}\|u\|_{\hat{B}^0}\\
&\quad+\sum_{k+1<2q}\|\hat{\D}_{q,k}\mathcal{H}_1\|_{L^1(L^2)}\Big).
\end{split}
\end{equation}
The desired estimate \eqref{82} follows from \eqref{89}, \eqref{a15} and \eqref{a16}.

\end{proof}

\subsection{Dissipation of $w$}

According to \eqref{81b}, $(w,\mathcal{H}_2)$ satisfies
\begin{subequations}\label{e41}
 \begin{align}
&\partial_tw+\u\cdot\nabla w-\D w+\mathcal{H}_2=\mathcal{O}_2\label{e4a1}\\
&\partial_t\mathcal{H}_2+\u\cdot\nabla\mathcal{H}_2-\lambda_{+}(D)w=\mathcal{Q}_2.\label{e4b1}
 \end{align}
\end{subequations}
Since $\lambda_{+}(\xi)\approx |\xi|^2$, the system \eqref{e41} features a similar linear structure as compressible Navier-Stokes equations and viscoelastic 
systems (cf. \cite{RD, HW, QZ}), and this suggests us to decompose the frequence 
into two parts: low frequencies $q\le 1$ and high frequencies $q>1$.

For the system \eqref{e41}, we have the following dissipation estimate for $w$.
\begin{Proposition}\label{p111}
For solutions $(w, \mathcal{H}_2)$ of \eqref{e41}, it holds true
\begin{equation}\label{821}
\begin{split}
&\|w\|_{L^\infty(\hat{B}^0)}+\|\lambda_{+}(D)^{-\f12}\mathcal{H}_2\|_{L^\infty(\hat{B}^{0}\cap\hat{B}^1)}+\|w\|_{L^1(\hat{B}^2)}\\
&\qquad+\int_0^t\sum_{q\le 1}2^{2q}\|\lambda_{+}(D)^{-\f12}\hat{\D}_{q,k}\mathcal{H}_2\|_{L^2}ds+\int_0^t\sum_{q> 1}\|\hat{\D}_{q,k}\mathcal{H}_2\|_{L^2}ds\\
&\quad\le C\Big(\|w(0)\|_{\hat{B}^0}+\|\lambda_{+}(D)^{-\f12}\mathcal{H}_2(0)\|_{\hat{B}^{0}\cap\hat{B}^1}+\|\lambda_{+}(D)^{-\f12}\mathcal{Q}_2\|_{L^1(\hat{B}^{0}\cap\hat{B}^1)}+\|\mathcal{O}_2\|_{L^1(\hat{B}^{0})}\\
&\qquad+\|\u\|_{L^1(\hat{B}^2)}(\|\lambda_{+}(D)^{-\f12}\mathcal{H}_2\|_{L^\infty(\hat{B}^{0}\cap\hat{B}^1)}+\|w\|_{L^\infty(\hat{B}^0)})\Big).
\end{split}
\end{equation}
\end{Proposition}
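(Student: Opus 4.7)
The proof follows exactly the same two-step template as Proposition \ref{p00}, adjusted for the fact that the symbol $\lambda_{+}(\xi)\approx |\xi|^{2}$ is isotropic rather than concentrated in the $\xi_{1}$-direction. In particular the natural frequency decomposition is no longer the anisotropic split $k+1\ge 2q$ versus $k+1<2q$ but the standard Danchin-type split of the compressible Navier--Stokes system \cite{RD}, namely low frequencies $q\le 1$ versus high frequencies $q>1$. I apply $\hat{\D}_{q,k}$ to \eqref{e41} and build a dyadic Lyapunov functional $f_{q,k}(t)$ for each $(q,k)$, designed so that its time derivative produces the damping appearing on the left-hand side of \eqref{821}.

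In the low-frequency regime $q\le 1$, where $\lambda_{+}(\xi)^{-\f12}\approx 1$, I would take
\begin{equation*}
f_{q,k}^{2}=\|\hat{\D}_{q,k}w\|_{L^{2}}^{2}+\|\lambda_{+}(D)^{-\f12}\hat{\D}_{q,k}\mathcal{H}_{2}\|_{L^{2}}^{2}+\iota\,(\hat{\D}_{q,k}w\,|\,\hat{\D}_{q,k}\mathcal{H}_{2}),
\end{equation*}
with $\iota>0$ small. The viscous term in \eqref{e4a1} produces $\|\Lambda\hat{\D}_{q,k}w\|_{L^{2}}^{2}$, and the cross product, combined with the coupling $-\lambda_{+}(D)w$ in \eqref{e4b1}, generates a coercive contribution $\iota\|\hat{\D}_{q,k}\mathcal{H}_{2}\|_{L^{2}}^{2}\approx \iota\,2^{2q}\|\lambda_{+}(D)^{-\f12}\hat{\D}_{q,k}\mathcal{H}_{2}\|_{L^{2}}^{2}$; convective terms are absorbed via Lemma \ref{cl}. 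After applying Bernstein's inequality this gives
\begin{equation*}
\f{d}{dt}f_{q,k}+\kappa\,2^{2q}f_{q,k}\lesssim \|\hat{\D}_{q,k}\mathcal{O}_{2}\|_{L^{2}}+\|\lambda_{+}(D)^{-\f12}\hat{\D}_{q,k}\mathcal{Q}_{2}\|_{L^{2}}+\alpha_{q,k}\|\u\|_{\hat{B}^{2}}X(t),
\end{equation*}
with $X(t)=\|w\|_{\hat{B}^{0}}+\|\lambda_{+}(D)^{-\f12}\mathcal{H}_{2}\|_{\hat{B}^{0}\cap\hat{B}^{1}}$. In the high-frequency regime $q>1$ the weight $\lambda_{+}^{-\f12}\approx 2^{-q}$ is too weak, so I would instead adopt the Case 2 type functional of Proposition \ref{p00},
\begin{equation*}
f_{q,k}^{2}=2\|\Lambda^{-2}\lambda_{+}(D)\hat{\D}_{q,k}w\|_{L^{2}}^{2}+\|\hat{\D}_{q,k}\mathcal{H}_{2}\|_{L^{2}}^{2}+2(\Lambda^{-2}\lambda_{+}(D)\hat{\D}_{q,k}w\,|\,\hat{\D}_{q,k}\mathcal{H}_{2}),
\end{equation*}
which, since $\Lambda^{-2}\lambda_{+}(D)\approx 1$ here, is equivalent to $\|\hat{\D}_{q,k}w\|_{L^{2}}^{2}+\|\hat{\D}_{q,k}\mathcal{H}_{2}\|_{L^{2}}^{2}$, and the analogous linear combination of localized pairings gives $\f{d}{dt}f_{q,k}+\kappa f_{q,k}\lesssim \|\hat{\D}_{q,k}\mathcal{O}_{2}\|_{L^{2}}+\|\hat{\D}_{q,k}\mathcal{Q}_{2}\|_{L^{2}}+\alpha_{q,k}\|\u\|_{\hat{B}^{2}}X(t)$. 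Summing these ODEs over $(q,k)$ with unit weight and integrating in time then reconstructs the $L^{\infty}$ bounds for $w$ and for $\lambda_{+}^{-\f12}\mathcal{H}_{2}$, as well as the two frequency-split dissipation integrals for $\mathcal{H}_{2}$: the rate $2^{2q}$ at $q\le 1$ yields exactly $\int\!\sum_{q\le 1}2^{2q}\|\lambda_{+}(D)^{-\f12}\hat{\D}_{q,k}\mathcal{H}_{2}\|_{L^{2}}$, while the unit rate at $q>1$ yields $\int\!\sum_{q>1}\|\hat{\D}_{q,k}\mathcal{H}_{2}\|_{L^{2}}$.

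Step 1 only delivers $w$ in $L^{1}(\hat{B}^{2})$ at low frequencies, so in a Step 2 mirroring the smoothing argument of Proposition \ref{p00} I would treat \eqref{e4a1} as a scalar parabolic equation with source $\mathcal{O}_{2}-\mathcal{H}_{2}$, localize with $\hat{\D}_{q,k}$ and pair with $\hat{\D}_{q,k}w$; the high-frequency smoothing $\|\Lambda\hat{\D}_{q,k}w\|_{L^{2}}^{2}\gtrsim 2^{2q}\|\hat{\D}_{q,k}w\|_{L^{2}}^{2}$ yields
\begin{equation*}
\sum_{q>1}2^{2q}\|\hat{\D}_{q,k}w\|_{L^{1}(L^{2})}\lesssim \|w(0)\|_{\hat{B}^{0}}+\|\mathcal{O}_{2}\|_{L^{1}(\hat{B}^{0})}+\|\u\|_{L^{1}(\hat{B}^{2})}\|w\|_{L^{\infty}(\hat{B}^{0})}+\sum_{q>1}\|\hat{\D}_{q,k}\mathcal{H}_{2}\|_{L^{1}(L^{2})},
\end{equation*}
the last term being absorbed by the Step 1 dissipation. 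The main obstacle is the coercivity bookkeeping in the high-frequency case: the cross term of $f_{q,k}^{2}$ must be controlled by the viscous dissipation of $w$ and yet remain strong enough to extract the \emph{unweighted} damping $\|\hat{\D}_{q,k}\mathcal{H}_{2}\|_{L^{2}}$ of the magnetic component (crucial, since $\H$ itself carries no intrinsic dissipation). Fixing the coefficient $2$ in front of $\|\Lambda^{-2}\lambda_{+}(D)\hat{\D}_{q,k}w\|^{2}_{L^{2}}$ is exactly what guarantees that the Gram matrix defining $f_{q,k}^{2}$ is uniformly positive definite for $q>1$, in perfect analogy with Case 2 of Proposition \ref{p00} under the substitution $\lambda_{-}\leadsto \lambda_{+}$.
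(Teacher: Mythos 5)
Your proposal is correct and follows essentially the same route as the paper: the same $q\le 1$ versus $q>1$ split, the same low-frequency Lyapunov functional with a small cross term $\iota(\hat{\D}_{q,k}w|\hat{\D}_{q,k}\mathcal{H}_2)$, the identical high-frequency functional $2\|\Lambda^{-2}\lambda_{+}(D)\hat{\D}_{q,k}w\|_{L^2}^2+\|\hat{\D}_{q,k}\mathcal{H}_2\|_{L^2}^2+2(\Lambda^{-2}\lambda_{+}(D)\hat{\D}_{q,k}w|\hat{\D}_{q,k}\mathcal{H}_2)$, and the same second smoothing step treating $\mathcal{H}_2$ as a source to recover $\sum_{q>1}2^{2q}\|\hat{\D}_{q,k}w\|_{L^1(L^2)}$. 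Your closing remark on the positive definiteness of the Gram matrix for $q>1$ is exactly the point the paper addresses via $\|\Lambda\hat{\D}_{q,k}\phi\|_{L^2}\ge 2^q\|\hat{\D}_{q,k}\phi\|_{L^2}\ge 4\|\hat{\D}_{q,k}\phi\|_{L^2}$.
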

\begin{proof} We split the proof into two steps.

\texttt{Step 1: Energy Estimates.}
Applying the Littlewood-Paley decomposition $\hat{\D}_{q,k}$ to \eqref{e41}, one has
\begin{subequations}\label{831}
 \begin{align}
&\partial_t\hat{\D}_{q,k}w+\hat{\D}_{q,k}(\u\cdot\nabla w)-\D\hat{\D}_{q,k}w+\hat{\D}_{q,k}\mathcal{H}_2=\hat{\D}_{q,k}\mathcal{O}_2\label{831b}\\
&\partial_t\hat{\D}_{q,k}\mathcal{H}_2+\hat{\D}_{q,k}(\u\cdot\nabla\mathcal{H}_2)-\lambda_{+}(D)\hat{\D}_{q,k}w=\hat{\D}_{q,k}\mathcal{Q}_2.\label{831c}
 \end{align}
\end{subequations}
We consider two cases according to the frequence.

\textit{Case 1: $q\le 1$.} 

For $\iota>0$, define
\begin{equation*}
\begin{split}
f_{q,k}^2&=\|\hat{\D}_{q,k} w\|_{L^2}^2+\|\lambda_{+}(D)^{-\f12}\hat{\D}_{q,k}\mathcal{H}_2\|_{L^2}^2+2\iota (\hat{\D}_{q,k}w|\hat{\D}_{q,k}\mathcal{H}_2).
\end{split}
\end{equation*}

Taking the $L^2$ product of \eqref{831b} with $\hat{\D}_{q,k}u$ gives
\begin{equation}\label{841b}
 \begin{split}
&\f12\f{d}{dt}\|\hat{\D}_{q,k}w\|_{L^2}^2+(\hat{\D}_{q,k}(\u\cdot\nabla w)|\hat{\D}_{q,k}w)+(\hat{\D}_{q,k}\mathcal{H}_2|\hat{\D}_{q,k}w)
+\|\Lambda\hat{\D}_{q,k}w\|_{L^2}^2\\
&\quad=(\hat{\D}_{q,k}\mathcal{O}_2|\hat{\D}_{q,k}w).
 \end{split}
\end{equation}
Applying the operator $\lambda_{+}(D)^{-\f12}$ to \eqref{831c} and taking the $L^2$ product of the resulting equation with $\lambda_{+}(D)^{-\f12}\hat{\D}_{q,k}\mathcal{H}_2$ gives
\begin{equation}\label{841c}
 \begin{split}
&\f12\f{d}{dt}\|\lambda_{+}(D)^{-\f12}\hat{\D}_{q,k}\mathcal{H}_2\|_{L^2}^2+(\lambda_{+}(D)^{-\f12}\hat{\D}_{q,k}(\u\cdot\nabla\mathcal{H}_2)|\lambda_{+}(D)^{-\f12}\hat{\D}_{q,k}\mathcal{H}_2)\\
&\quad-(\hat{\D}_{q,k}w|\hat{\D}_{q,k}\mathcal{H}_2)=(\lambda_{+}(D)^{-\f12}\hat{\D}_{q,k}\mathcal{Q}_2|\lambda_{+}(D)^{-\f12}\hat{\D}_{q,k}\mathcal{H}_2).
 \end{split}
\end{equation}

For the cross term $(\hat{\D}_{q,k}w|\hat{\D}_{q,k}\mathcal{H}_2)$, multiplying \eqref{831c} and \eqref{831b} by $\hat{\D}_{q,k}w$ and $\hat{\D}_{q,k}\mathcal{H}_2$ respectively and then summing together, one gets
\begin{equation}\label{851}
 \begin{split}
&\f{d}{dt}(\hat{\D}_{q,k}w|\hat{\D}_{q,k}\mathcal{H}_2)+(\hat{\D}_{q,k}(\u\cdot\nabla w)|\hat{\D}_{q,k}\mathcal{H}_2)+(\hat{\D}_{q,k}(\u\cdot\nabla\mathcal{H}_2)|\hat{\D}_{q,k}w)\\
&\qquad+(\Lambda^2\hat{\D}_{q,k}w|\hat{\D}_{q,k}\mathcal{H}_2)+\|\hat{\D}_{q,k}\mathcal{H}_2\|_{L^2}^2-(\lambda_{+}(D)\hat{\D}_{q,k}w|\hat{\D}_{q,k}w)\\
&\quad=(\hat{\D}_{q,k}\mathcal{O}_2|\hat{\D}_{q,k}\mathcal{H}_2)+(\hat{\D}_{q,k}\mathcal{Q}_2|\hat{\D}_{q,k}w).
 \end{split}
\end{equation}

A linear combination of \eqref{841b}-\eqref{851} gives
\begin{equation}\label{871}
 \begin{split}
&\f12\f{d}{dt}f_q^2+(1-\iota)\|\Lambda\hat{\D}_{q,k}w\|_{L^2}^2+\iota\|\Lambda\lambda_{+}(D)^{-\f12}\hat{\D}_{q,k} \mathcal{H}_2\|_{L^2}^2+\iota(\Lambda^2\hat{\D}_{q,k}w|\hat{\D}_{q,k}\mathcal{H}_2)\\
&\quad\approx \mathcal{X}_{q,k}
 \end{split}
\end{equation}
with
\begin{equation*}
 \begin{split}
\mathcal{X}_{q,k}&=(\hat{\D}_{q,k}\mathcal{O}_2|\hat{\D}_{q,k}w)+(\lambda_{+}(D)^{-\f12}\hat{\D}_{q,k}\mathcal{Q}_2|\lambda_{+}(D)^{-\f12}\hat{\D}_{q,k}\mathcal{H}_2)-(\hat{\D}_{q,k}(\u\cdot\nabla w)|\hat{\D}_{q,k}w)\\
&\quad-(\lambda_{+}(D)^{-\f12}\hat{\D}_{q,k}(\u\cdot\nabla\mathcal{H}_2)|\lambda_{+}(D)^{-\f12}\hat{\D}_{q,k}\mathcal{H}_2)+\iota\Big((\hat{\D}_{q,k}\mathcal{O}_2|\hat{\D}_{q,k}\mathcal{H}_2)\\
&\quad+(\hat{\D}_{q,k}\mathcal{Q}_2|\hat{\D}_{q,k}w)-(\hat{\D}_{q,k}(\u\cdot\nabla w)|\hat{\D}_{q,k}\mathcal{H}_2)-(\hat{\D}_{q,k}(\u\cdot\nabla\mathcal{H}_2)|\hat{\D}_{q,k}w)\Big).
 \end{split}
\end{equation*}

Note that
$$f_{q,k}^2\approx \|\hat{\D}_{q,k}w\|_{L^2}^2+\|\lambda_{+}(D)^{-\f12}\hat{\D}_{q,k}\mathcal{H}_2\|_{L^2}^2$$if $\iota$ is chosen to be sufficiently small.

Using Bernstein's inequality and the Cauchy-Swartz inequality, one has 
\begin{equation*}
 \begin{split}
&(1-\iota)\|\Lambda\hat{\D}_{q,k}w\|_{L^2}^2+\iota\|\Lambda\lambda_{+}(D)^{-\f12}\hat{\D}_{q,k} \mathcal{H}_2\|_{L^2}^2+\iota(\Lambda^2\hat{\D}_{q,k}w|\hat{\D}_{q,k}\mathcal{H}_2)\\
&\quad\approx \|\Lambda\hat{\D}_{q,k} w\|_{L^2}^2+\|\Lambda\lambda_{+}(D)^{-\f12}\hat{\D}_{q,k} \mathcal{H}_2\|_{L^2}^2\\
&\quad\ge C2^{2q}\Big(\|\hat{\D}_{q,k} w\|_{L^2}^2+\|\lambda_{+}(D)^{-\f12}\hat{\D}_{q,k} \mathcal{H}_2\|_{L^2}^2\Big).
 \end{split}
\end{equation*}

One deduces from Lemma 5.1 in \cite{RD} that
\begin{equation*}
\begin{split}
|\mathcal{X}_{q,k}|&\le Cf_{q,k}\Big(\|\hat{\D}_{q,k}\mathcal{O}_2\|_{L^2}+\|\lambda_{+}(D)^{-\f12}\hat{\D}_{q,k}\mathcal{Q}_2\|_{L^2}\\
&\quad+\alpha_{q,k}\|\u\|_{\hat{B}^2}(\|\lambda_{+}(D)^{-\f12}\mathcal{H}_2\|_{\hat{B}^{0}\cap\hat{B}^1}+\|w\|_{\hat{B}^0})\Big),
\end{split}
\end{equation*} 
and hence it follows from \eqref{871} that there is a positive constant $\kappa$ such that
\begin{equation}\label{891}
\begin{split}
\f{d}{dt}f_{q,k}+\kappa 2^{2q} f_{q,k}&\le Cf_{q,k}\Big(\|\hat{\D}_{q,k}\mathcal{O}_2\|_{L^2}+\|\lambda_{+}(D)^{-\f12}\hat{\D}_{q,k}\mathcal{Q}_2\|_{L^2}\\
&\quad+\alpha_{q,k}\|\u\|_{\hat{B}^2}(\|\lambda_{+}(D)^{-\f12}\mathcal{H}_2\|_{\hat{B}^{0}\cap\hat{B}^1}+\|w\|_{\hat{B}^0})\Big).
\end{split}
\end{equation}
Summing \eqref{891} over $q, k\in\mathbb{Z}$ and integrating over $t\in \R$, one obtains \eqref{821}.

\textit{Case 2: $q>1$.} 

We define
\begin{equation*}
 \begin{split}
f_{q, k}^2&=2\|\Lambda^{-2}\lambda_{+}(D)\hat{\D}_{q,k}w\|_{L^2}^2+\|\hat{\D}_{q,k}\mathcal{H}_2\|_{L^2}^2+2(\Lambda^{-2}\lambda_{+}(D)\hat{\D}_{q,k}w|\hat{\D}_{q,k}\mathcal{H}_2).
 \end{split}
\end{equation*}

Applying the operator $\Lambda^{-2}\lambda_{+}(D)$ to \eqref{831b}, and then taking the $L^2$ product of the resulting equation with $\Lambda^{-2}\lambda_{+}(D)\hat{\D}_{q,k}w$, one gets
\begin{equation}\label{810a1}
 \begin{split}
&\f12\f{d}{dt}\|\Lambda^{-2}\lambda_{+}(D)\hat{\D}_{q,k}w\|_{L^2}^2+(\Lambda^{-2}\lambda_{+}(D)\hat{\D}_{q,k}(\u\cdot\nabla w)|\Lambda^{-2}\lambda_{+}(D)\hat{\D}_{q,k}w)\\
&\qquad+\|\Lambda\Lambda^{-2}\lambda_{+}(D)\hat{\D}_{q,k}w\|_{L^2}^2+(\Lambda^{-2}\lambda_{+}(D)\hat{\D}_{q,k}\mathcal{H}_2|\Lambda^{-2}\lambda_{+}(D)\hat{\D}_{q,k}w)\\
&\quad=(\Lambda^{-2}\lambda_{+}(D)\hat{\D}_{q,k}\mathcal{O}_2|\Lambda^{-2}\lambda_{+}(D)\hat{\D}_{q,k}w).  
 \end{split}
\end{equation}
Taking the $L^2$ product of \eqref{831c} with $\hat{\D}_{q,k}\mathcal{H}_2$, one gets
\begin{equation}\label{810b1}
\begin{split}
 &\f12\f{d}{dt}\|\hat{\D}_{q,k}\mathcal{H}_2\|_{L^2}^2+(\hat{\D}_{q,k}(\u\cdot\nabla \mathcal{H}_2))|\hat{\D}_{q,k}\mathcal{H}_2)
-(\lambda_{+}(D)\hat{\D}_{q,k}w|\hat{\D}_{q,k}\mathcal{H}_2)=(\hat{\D}_{q,k}\mathcal{Q}_2|\hat{\D}_{q,k}\mathcal{H}_2).
\end{split}
\end{equation}
For the cross term $(\Lambda^{-2}\lambda_{+}(D)\hat{\D}_{q,k}w|\hat{\D}_{q,k}\mathcal{H}_2)$, applying the operator $\Lambda^{-2}\lambda_{+}(D)$ to \eqref{831b}, 
and then taking the $L^2$ product of the resulting equation and \eqref{831c} with $\hat{\D}_{q,k}\mathcal{H}_2$ and $\Lambda^{-2}\lambda_{+}(D)\hat{\D}_{q,k}w$ respectively, one obtains
\begin{equation}\label{810c1}
 \begin{split}
&\f{d}{dt}(\Lambda^{-2}\lambda_{+}(D)\hat{\D}_{q,k}w|\hat{\D}_{q,k}\mathcal{H}_2)+(\Lambda^{-2}\lambda_{+}(D)\hat{\D}_{q,k}(\u\cdot\nabla w)|\hat{\D}_{q,k}\mathcal{H}_2)\\
&\qquad+(\hat{\D}_{q,k}(\u\cdot\nabla\mathcal{H}_2)|\Lambda^{-2}\lambda_{+}(D)\hat{\D}_{q,k}w)+(\lambda_{+}(D)\hat{\D}_{q,k}w|\hat{\D}_{q,k}\mathcal{H}_2)\\
&\qquad+\|\Lambda^{-1}\lambda_{+}(D)^{\f12}\hat{\D}_{q,k}\mathcal{H}_2\|_{L^2}^2
-\|\Lambda^{-1}\lambda_{+}(D)\hat{\D}_{q,k}w\|_{L^2}^2\\
&\quad=(\Lambda^{-2}\lambda_{+}(D)\hat{\D}_{q,k}\mathcal{O}_2|\hat{\D}_{q,k}\mathcal{H}_2)+(\hat{\D}_{q,k}\mathcal{Q}_2|\Lambda^{-2}\lambda_{+}(D)\hat{\D}_{q,k}w).
 \end{split}
\end{equation}

A linear combination of \eqref{810a1}, \eqref{810b1} and \eqref{810c1} gives
\begin{equation}\label{8111}
\begin{split}
&\f12\f{d}{dt}f_q^2+\|\Lambda\Lambda^{-2}\lambda_{+}(D)\hat{\D}_{q,k}w\|_{L^2}^2+\|\Lambda^{-1}\lambda_{+}(D)^{\f12}\hat{\D}_{q,k}\mathcal{H}_2\|_{L^2}^2
\\
&\quad+2(\Lambda^{-2}\lambda_{+}(D)\hat{\D}_{q,k}\mathcal{H}_2|\Lambda^{-2}\lambda_{+}(D)\hat{\D}_{q,k}w)\\
&=\mathcal{Y}_{q,k}
\end{split}
\end{equation}
with
\begin{equation*}
 \begin{split}
\mathcal{Y}_{q,k}&\overset{def}=2(\Lambda^{-2}\lambda_{+}(D)\hat{\D}_{q,k}\mathcal{O}_2|\Lambda^{-2}\lambda_{+}(D)\hat{\D}_{q,k}w)+(\hat{\D}_{q,k}\mathcal{Q}_2|\hat{\D}_{q,k}\mathcal{H}_2)\\
&\quad-2(\Lambda^{-2}\lambda_{+}(D)\hat{\D}_{q,k}(\u\cdot\nabla w)|\Lambda^{-2}\lambda_{+}(D)\hat{\D}_{q,k}w)+(\hat{\D}_{q,k}(\u\cdot\nabla \mathcal{H}_2))|\hat{\D}_{q,k}\mathcal{H}_2)\\
&\quad+(\Lambda^{-2}\lambda_{+}(D)\hat{\D}_{q,k}\mathcal{O}_2|\hat{\D}_{q,k}\mathcal{H}_2)+(\hat{\D}_{q,k}\mathcal{Q}_2|\Lambda^{-2}\lambda_{+}(D)\hat{\D}_{q,k}w)\\
&\quad-(\Lambda^{-2}\lambda_{+}(D)\hat{\D}_{q,k}(\u\cdot\nabla w)|\hat{\D}_{q,k}\mathcal{H}_2)-(\hat{\D}_{q,k}(\u\cdot\nabla\mathcal{H}_2)|\Lambda^{-2}\lambda_{+}(D)\hat{\D}_{q,k}w).
 \end{split}
\end{equation*}

Note that 
\begin{equation}\label{a12a1}
f_{q,k}^2\approx \|\Lambda^{-2}\lambda_{+}(D)\hat{\D}_{q,k}w\|_{L^2}^2+\|\hat{\D}_{q,k}\mathcal{H}_2\|_{L^2}^2.
\end{equation} 
Since $q>1$, it holds
\begin{equation*}
 \begin{split}
&2|(\Lambda^{-2}\lambda_{+}(D)\hat{\D}_{q,k}\mathcal{H}_2|\Lambda^{-2}\lambda_{+}(D)\hat{\D}_{q,k}w)|\\
&\quad\le \f12\Big(
\|\Lambda\Lambda^{-2}\lambda_{+}(D)\hat{\D}_{q,k}w\|_{L^2}^2+\|\Lambda^{-1}\lambda_{+}(D)^{\f12}\hat{\D}_{q,k}\mathcal{H}_2\|_{L^2}^2\Big), 
 \end{split}
\end{equation*}
and hence
\begin{equation*}
 \begin{split}
&\|\Lambda\Lambda^{-2}\lambda_{+}(D)\hat{\D}_{q,k}w\|_{L^2}^2+\|\Lambda^{-1}\lambda_{+}(D)^{\f12}\hat{\D}_{q,k}\mathcal{H}_2\|_{L^2}^2
+2(\Lambda^{-2}\lambda_{+}(D)\hat{\D}_{q,k}\mathcal{H}_2|\Lambda^{-2}\lambda_{+}(D)\hat{\D}_{q,k}w)\\
&\quad\approx \|\Lambda\Lambda^{-2}\lambda_{+}(D)\hat{\D}_{q,k}w\|_{L^2}^2+\|\Lambda^{-1}\lambda_{+}(D)^{\f12}\hat{\D}_{q,k}\mathcal{H}_2\|_{L^2}^2\\
&\quad\ge C\Big(\|\Lambda^{-2}\lambda_{+}(D)\hat{\D}_{q,k}w\|_{L^2}^2+\|\hat{\D}_{q,k}\mathcal{H}_2\|_{L^2}^2\Big).  
 \end{split}
\end{equation*}
Here we lost the regularity of $\Lambda$ by using 
$$\|\Lambda\hat{\D}_{q,k}\phi\|_{L^2}\ge 2^q\|\hat{\D}_{q,k}\phi\|_{L^2}\ge 4\|\hat{\D}_{q,k}\phi\|_{L^2}.$$

On the other hand, one deduces from Lemma 5.1 in \cite{RD} that
\begin{equation}\label{a121}
\begin{split}
|\mathcal{Y}_{q,k}|&\le Cf_{q,k}\Big(\|\hat{\D}_{q,k}\mathcal{O}_2\|_{L^2}+\|\hat{\D}_{q,k}\mathcal{Q}_2\|_{L^2}
+\alpha_{q,k}\|\u\|_{\hat{B}^2}(\|\lambda_{+}(D)^{-\f12}\mathcal{H}_2\|_{\hat{B}^0\cap\hat{B}^1}+\|w\|_{\hat{B}^0})\Big).
\end{split}
\end{equation}

Combining \eqref{8111} and \eqref{a121}, one has
\begin{equation}\label{a131}
 \begin{split}
\f{d}{dt}f_{q,k}+f_{q,k}&\lesssim \|\hat{\D}_{q,k}\mathcal{O}_2\|_{L^2}+\|\hat{\D}_{q,k}\mathcal{Q}_2\|_{L^2}+\alpha_{q,k}\|\u\|_{\hat{B}^2}(\|\lambda_{+}(D)^{-\f12}\mathcal{H}_2\|_{\hat{B}^0\cap\hat{B}^1}+\|w\|_{\hat{B}^0})
 \end{split}
\end{equation} for some positive constant $\kappa$.

Summing \eqref{a131} over $q,k\in\mathbb{Z}$ and integrating over $t\in \R$, it follows
\begin{equation}\label{a151}
\begin{split}
&\sup_{0\le \tau<\infty}\sum_{q>1}\Big(\|\hat{\D}_{q,k}w(\tau)\|_{L^2}+\|\hat{\D}_{q,k}\mathcal{H}_2(\tau)\|_{L^2}\Big)+\sum_{q>1}\|\hat{\D}_{q,k}\mathcal{H}_2\|_{L^1(L^2)}\\
&\quad\le C\Big(\|w(0)\|_{\hat{B}^0}+\|\lambda_{+}(D)^{-\f12}\mathcal{H}_2(0)\|_{\hat{B}^0\cap\hat{B}^1}+\|\mathcal{O}_2\|_{L^1(\hat{B}^0)}+\|\lambda_{+}(D)^{-\f12}\mathcal{Q}_2\|_{L^1(\hat{B}^0\cap\hat{B}^1)}\\
&\qquad+\|\u\|_{L^1(\hat{B}^2)}(\|w\|_{L^\infty(\hat{B}^0)}+\|\lambda_{+}(D)^{-\f12}\mathcal{H}_2\|_{L^\infty(\hat{B}^0\cap\hat{B}^1)})\Big).
\end{split}
\end{equation}

\texttt{Step 2: Smoothing Effect.} We are going to use the parabolicity of $w$ to improve the dissipation of $w$. According to \eqref{891}, it is only left to handle the case $q>1$.
Going back to the equation \eqref{831b}, and regarding now the term $\mathcal{H}_2$ as an external term, one has
\begin{equation*}
\begin{split}
&\f12\f{d}{dt}\|\hat{\D}_{q,k}w\|_{L^2}^2+(\hat{\D}_{q,k}(\u\cdot\nabla w)|\hat{\D}_{q,k}w)+2^{2q}\|\hat{\D}_{q,k}w\|_{L^2}^2\\
&\quad\le C\Big(\|\hat{\D}_{q,k}\mathcal{H}_2\|_{L^2}+\|\hat{\D}_{q,k}\mathcal{O}_2\|_{L^2}\Big)\|\hat{\D}_{q,k}w\|_{L^2}, 
\end{split}
\end{equation*}
and hence
\begin{equation}\label{a161}
\begin{split}
\sum_{q>1}2^{2q}\|\hat{\D}_{q,k}w\|_{L^1(L^2)}&\le C\Big(\|w(0)\|_{\hat{B}^0}+\|\mathcal{O}_2\|_{L^1(\hat{B}^0)}+\|\u\|_{\hat{B}^2}\|w\|_{\hat{B}^0}\\
&\quad+\sum_{q>1}\|\hat{\D}_{q,k}\mathcal{H}_2\|_{L^1(L^2)}\Big).
\end{split}
\end{equation}
The desired estimate \eqref{821} follows from \eqref{891}, \eqref{a151} and \eqref{a161}.

\end{proof}

\bigskip\bigskip
%%%%%%%%%%%%%%%%%%%%%%%%%%%%%%%%%%%%%%%%%%%%%%%%%%%%%%%%%%%%%%%%%%%%%%%%%%%%%%%%%%%%%%%%%%%%%%%%%%%%%%%%%%%%%%%%%%%%%%%%%%%%%%%%%%%

\section{Dissipations of $\H$ and $b$}
This section is devoted to the dissipation estimates for the magnetic field $\H$ and the density $b$.
To this end, following line by line through \eqref{17}-\eqref{18}, using Plancheral's identity and \eqref{16d}, one has
\begin{equation*}
 \begin{split}
&\|\Lambda\hat{\D}_{q,k}\H\|_{L^2}^2+\left\|\Lambda \hat{\D}_{q,k}b\right\|_{L^2}^2\\
&\quad=\Big(\hat{\D}_{q,k}(\nabla(b+\H_1)-\partial_{x_1}\H)|\hat{\D}_{q,k}\Dv \mathcal{A}\Big)+\Big(\Lambda\hat{\D}_{q,k}b|\Lambda\hat{\D}_{q,k}\det\mathcal{A}\Big)\\
&\quad=\Big(\hat{\D}_{q,k}\mathcal{H}|\mathcal{P}\hat{\D}_{q,k}\Dv \mathcal{A}\Big)+\Big(\hat{\D}_{q,k}\mathcal{P}\nabla\det\mathcal{A}|\mathcal{P}\hat{\D}_{q,k}\Dv \mathcal{A}\Big)
+\Big(\Lambda\hat{\D}_{q,k}b|\Lambda\hat{\D}_{q,k}\det\mathcal{A}\Big),
 \end{split}
\end{equation*}
and hence 
\begin{equation}\label{323}
 \begin{split}
&\|\Lambda\hat{\D}_{q,k}\H\|_{L^2}^2+\left\|\Lambda\hat{\D}_{q,k}b\right\|_{L^2}^2\\
&\lesssim \Big(\hat{\D}_{q,k}\mathcal{H}|\mathcal{P}\hat{\D}_{q,k}\Dv \mathcal{A}\Big)+\Big(\hat{\D}_{q,k}\mathcal{P}\nabla\det\mathcal{A}|\mathcal{P}\hat{\D}_{q,k}\Dv \mathcal{A}\Big)
+\|\Lambda\hat{\D}_{q,k}\det\mathcal{A}\|^2_{L^2}.
 \end{split}
\end{equation}
We claim first that
\begin{Lemma}\label{l21}
There holds true
\begin{equation}\label{320a}
 \begin{split}
&\left(\int_0^\infty\left(\sum_{q,k\in\mathbb{Z}}\left|\Big(\hat{\D}_{q,k}\mathcal{H}|\mathcal{P}\hat{\D}_{q,k}\Dv \mathcal{A}\Big)\right|^{\f12}\right)^2dt\right)^{\f12}\\
&\lesssim \|\mathcal{A}\|_{L^\infty(\hat{B}^{1})}+\|\det\mathcal{A}\|_{L^2(\hat{B}^1)}+\|\lambda_{-}(D)^{-\f12}\mathcal{H}_1(0)\|_{\tilde{B}^{0,1}}
+\|\lambda_{+}(D)^{-\f12}\mathcal{H}_2(0)\|_{\hat{B}^{0}\cap\hat{B}^1}\\
&\qquad+\|\u(0)\|_{\hat{B}^0}+\|\lambda_{-}(D)^{-\f12}\mathcal{Q}_1\|_{L^1(\tilde{B}^{0,1})}+\|\lambda_{+}(D)^{-\f12}\mathcal{Q}_2\|_{L^1(\hat{B}^{0}\cap\hat{B}^1)}+\|\mathcal{O}\|_{L^1(\hat{B}^{0})}\\
&\qquad+\|\u\|_{L^1(\hat{B}^2)}(\|\lambda_{-}(D)^{-\f12}\mathcal{H}_1\|_{L^\infty(\tilde{B}^{0,1})}+\|\lambda_{+}(D)^{-\f12}\mathcal{H}_2\|_{L^\infty(\hat{B}^{0}\cap\hat{B}^1)}+\|\u\|_{L^\infty(\hat{B}^0)}).
 \end{split}
\end{equation}
\end{Lemma}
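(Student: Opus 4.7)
The plan builds on three ingredients: a Minkowski-style reduction to handle the nested $L^2_t$--$\ell^1_{q,k}$ structure on the left, substitution of $\mathcal{P}\mathcal{H}$ from the diagonalized velocity equation \eqref{81ba}, and a time integration by parts that trades $\partial_t$ on the velocity for $\partial_t$ on $\mathcal{A}$, which is given by \eqref{17d}. Setting $I_{q,k}(t):=|(\hat{\D}_{q,k}\mathcal{H}\,|\,\mathcal{P}\hat{\D}_{q,k}\Dv\mathcal{A})(t)|$, Minkowski's inequality in $L^2_t$ yields
\[\Big\|\sum_{q,k}I_{q,k}^{1/2}\Big\|_{L^2_t} \le \sum_{q,k}\Big(\int_0^\infty I_{q,k}(t)\,dt\Big)^{1/2},\]
so it suffices to control the $\ell^1_{q,k}$ sum on the right.

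To estimate $\int_0^\infty I_{q,k}\,dt$, I would first use $\mathcal{P}^\top=\mathcal{P}$ to rewrite the pairing as $(\hat{\D}_{q,k}\mathcal{P}\mathcal{H}\,|\,\hat{\D}_{q,k}\Dv\mathcal{A})$, then substitute $\mathcal{P}\mathcal{H}=\mathcal{O}+\D v-\partial_t v-\u\cdot\nabla v$ from \eqref{81ba}, with $v:=\mathcal{P}\u$. This splits the integral into four pieces. On the $\partial_t v$ piece I integrate by parts in time on a finite interval $[0,T]$ (then let $T\to\infty$), transferring $\partial_s$ onto $\hat{\D}_{q,k}\Dv\mathcal{A}$ and producing boundary terms at $s=0,T$; the resulting $\partial_s\mathcal{A}$ is expanded via \eqref{17d} as $-\u\cdot\na\mathcal{A}-(\mathcal{A}+I)\na\u$. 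All terms now involve only $\u$, $\mathcal{A}$, $\mathcal{O}$ and endpoint values, for which uniform bounds are available: the convective term is handled by Proposition \ref{p2} together with the commutator estimate of Lemma \ref{cl1}; the $\D v$ piece contributes $\|\u\|_{L^1(\hat{B}^2)}\|\mathcal{A}\|_{L^\infty(\hat{B}^1)}$; the $\mathcal{O}$ piece contributes $\|\mathcal{O}\|_{L^1(\hat{B}^0)}\|\mathcal{A}\|_{L^\infty(\hat{B}^1)}$; and the boundary pieces at $s=0,T$ are controlled by $\|\u\|_{L^\infty(\hat{B}^0)}\|\mathcal{A}\|_{L^\infty(\hat{B}^1)}$, with the $s=0$ contribution converted into the listed initial-data norms via Lemma \ref{le}.

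After these bounds each $\int_0^\infty I_{q,k}\,dt$ is dominated by a product $A_{q,k}B_{q,k}$ whose row-sums $\sum_{q,k}A_{q,k}$ and $\sum_{q,k}B_{q,k}$ are each controlled by Besov norms already appearing on the right-hand side of \eqref{320a}. Cauchy--Schwarz in $(q,k)$ gives $\sum_{q,k}(A_{q,k}B_{q,k})^{1/2}\le(\sum_{q,k}A_{q,k})^{1/2}(\sum_{q,k}B_{q,k})^{1/2}$, and AM--GM then converts the product of square roots into the additive form required. \textbf{The main obstacle} I anticipate is matching the frequency weights: the $2^q$ arising from $\Dv$ in $\hat{\D}_{q,k}\Dv\mathcal{A}$ must be redistributed between the two factors consistently along the two dichotomies $k+1\gtrless 2q$ and $q\gtrless 1$ used in Propositions \ref{p00} and \ref{p111}, in each regime placing just enough $2^q$ on the $\mathcal{A}$-side to reconstitute an $\hat{B}^1$ sum while leaving the remaining $2^q$ on the $v$-side so that the $\D v$ piece yields an $\hat{B}^2$ norm of $\u$ rather than a mismatched intermediate regularity. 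Apart from this case-split bookkeeping and the standard $T\to\infty$ passage, the remaining estimates reduce to the product and commutator bounds of Section 3.
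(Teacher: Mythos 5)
Your route is genuinely different from the paper's. The paper never touches the momentum equation here: it uses the purely algebraic identity \eqref{le2}/\eqref{16d}, namely $\mathcal{P}\hat{\D}_{q,k}\Dv\mathcal{A}=\mathcal{P}\D\hat{\D}_{q,k}\alpha=-\textrm{diag}(\lambda_{-}^{-1}(D)\D,\lambda_{+}^{-1}(D)\D)\hat{\D}_{q,k}\mathcal{H}+(\det\mathcal{A}\ \text{corrections})$, so that the pairing becomes the sign-definite square $\|\Lambda\lambda_{\mp}^{-\f12}(D)\hat{\D}_{q,k}\mathcal{H}_i\|_{L^2}^2$ up to cubic terms; it then applies Cauchy--Schwarz in $(q,k)$ at each \emph{fixed} time, splitting $2^{q}\|\cdot\|_{L^2}=\|\cdot\|_{L^2}^{1/2}\,(2^{2q}\|\cdot\|_{L^2})^{1/2}$ (resp.\ $\|\hat{\D}_{q,k}\mathcal{H}_i\|^{1/2}\|\hat{\D}_{q,k}\Dv\mathcal{A}\|^{1/2}$ in the high-frequency regimes), and only afterwards integrates in time, so that one factor is an $L^\infty_t$ norm and the other is exactly one of the $L^1_t$ dissipation integrals furnished by Propositions \ref{p00} and \ref{p111}. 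Your substitution of $\mathcal{P}\mathcal{H}$ from \eqref{81ba} plus a time integration by parts against \eqref{17d} is a legitimate alternative, and it largely sidesteps the $k+1\gtrless 2q$ and $q\gtrless 1$ dichotomies (your stated ``main obstacle'' mostly evaporates, since the velocity norms in Propositions \ref{p00} and \ref{p111} carry no dichotomy; the dichotomies matter in the paper's proof precisely because it works with the $\mathcal{H}_i$ dissipation integrals directly).

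Two concrete repairs are needed before your argument closes. First, doing Minkowski before Cauchy--Schwarz commits you to bounding sums of the form $\sum_{q,k}\sup_t(\cdot)$, i.e.\ Chemin--Lerner-type norms $\tilde{L}^\infty_t(\hat{B}^1)$ of $\mathcal{A}$ and $\tilde{L}^\infty_t(\hat{B}^0)$ of $\u$; these are \emph{not} the $\sup_t\sum_{q,k}$ norms appearing in \eqref{320a} and in Propositions \ref{p00}, \ref{p111}. They are recoverable from the dyadic ODEs \eqref{89}, \eqref{a13}, \eqref{891}, \eqref{a131} and \eqref{65}, but this must be stated; the paper's order of operations (Cauchy--Schwarz in $(q,k)$ pointwise in $t$, then integrate, then pull out the sup) avoids the issue entirely. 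Second, and more seriously: after AM--GM your $\D v$ piece and boundary pieces produce the standalone terms $\|\u\|_{L^1(\hat{B}^2)}$ and $\|\u\|_{L^\infty(\hat{B}^0)}$, which do \emph{not} appear additively on the right-hand side of \eqref{320a}. Left as they are, they are of size $X(t)$ rather than $X(0)+X(t)^2$, and the bootstrap $(\mathfrak{G})$ in Section 6 would fail to close. You must invoke Propositions \ref{p00} and \ref{p111} (together with $\|\hat{\D}_{q,k}\mathcal{P}\u\|_{L^2}=\|\hat{\D}_{q,k}\u\|_{L^2}$, since $\mathcal{P}(\xi)$ is orthonormal) to dominate these two velocity norms by the quantities actually listed in \eqref{320a}; this is exactly the step at which the paper's proof says ``the desired estimate follows from Proposition \ref{p00}/\ref{p111}'', and it is absent from your write-up.
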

\begin{proof}
Note that
\begin{subequations}\label{324}
 \begin{align}
&\int_0^\infty\left(\sum_{q,k\in\mathbb{Z}}\left|\Big(\hat{\D}_{q,k}\mathcal{H}|\mathcal{P}\hat{\D}_{q,k}\Dv \mathcal{A}\Big)\right|^{\f12}\right)^2dt\nonumber\\
&\quad\lesssim \int_0^\infty\left(\sum_{q,k\in\mathbb{Z}}\left|\Big(\hat{\D}_{q,k}\mathcal{H}_1|\Big(\mathcal{P}\hat{\D}_{q,k}\Dv \mathcal{A}\Big)_1\Big)\right|^{\f12}\right)^2dt\label{324a}\\
&\qquad+\int_0^\infty\left(\sum_{q,k\in\mathbb{Z}}\left|\Big(\hat{\D}_{q,k}\mathcal{H}_2|\Big(\mathcal{P}\hat{\D}_{q,k}\Dv \mathcal{A}\Big)_2\Big)\right|^{\f12}\right)^2dt.\label{324b}
 \end{align}
\end{subequations}
Since $\Dv\mathcal{A}=\D\alpha$, one deduces from \eqref{le2} that
\begin{equation}\label{324c1}
 \begin{split}
&(\hat{\D}_{q,k}\mathcal{H}_1|(\mathcal{P}\hat{\D}_{q,k}\Dv\mathcal{A})_1)=(\hat{\D}_{q,k}\mathcal{H}_1|\D\hat{\D}_{q,k}(\mathcal{P}\alpha)_1)\\
&\quad=\|\Lambda\lambda_{-}(D)^{-\f12}\hat{\D}_{q,k}\mathcal{H}_1\|_{L^2}^2-(\Lambda\lambda_{-}^{-\f12}(D)\hat{\D}_{q,k}(\mathcal{P}\nabla\det\mathcal{A})_1| \Lambda\lambda_{-}^{-\f12}(D)\hat{\D}_{q,k}\mathcal{H}_1)
 \end{split}
\end{equation}
and
\begin{equation}\label{324c2}
 \begin{split}
&(\hat{\D}_{q,k}\mathcal{H}_2|(\mathcal{P}\hat{\D}_{q,k}\Dv\mathcal{A})_2)=(\hat{\D}_{q,k}\mathcal{H}_2|\D\hat{\D}_{q,k}(\mathcal{P}\alpha)_2)\\
&\quad=\|\Lambda\lambda_{+}(D)^{-\f12}\hat{\D}_{q,k}\mathcal{H}_2\|_{L^2}^2-(\Lambda\lambda_{+}^{-\f12}(D)\hat{\D}_{q,k}(\mathcal{P}\nabla\det\mathcal{A})_2| \Lambda\lambda_{+}^{-\f12}(D)\hat{\D}_{q,k}\mathcal{H}_2).
 \end{split}
\end{equation}

\texttt{Estimate of $\eqref{324a}$.} We consider two cases: $k+1\ge 2q$ and $k+1<2q$. 

\textit{Case 1: $k+1\ge 2q$.} 
Thanks to \eqref{le22}, one has
$$\|\Lambda\lambda_{-}^{-\f12}(D)\hat{\D}_{q,k}(\mathcal{P}\nabla\det\mathcal{A})_1\|_{L^2}\lesssim \|\Lambda\hat{\D}_{q,k}\det\mathcal{A}\|_{L^2}.$$
Thus, due to \eqref{324c1}, the Cauchy-Schwarz inequality and Holder's inequality for series imply
\begin{equation*}
 \begin{split}
\eqref{324a}&\le \int_0^\infty\left(\sum_{k+1\ge 2q}2^q\|\lambda_{-}(D)^{-\f12}\hat{\D}_{q,k}\mathcal{H}_1\|_{L^2}\right)^2dt+\|\det\mathcal{A}\|_{L^2(\hat{B}^1)}^2\\
&=\int_0^\infty\left(\sum_{k+1\ge 2q}2^q\|\hat{\D}_{q,k}\lambda_{-}(D)^{-\f12}\mathcal{H}_1\|_{L^2}^{1/2}\|\hat{\D}_{q,k}\lambda_{-}(D)^{-\f12}\mathcal{H}_1\|_{L^2}^{1/2}\right)^2dt\\
&\quad+\|\det\mathcal{A}\|_{L^2(\hat{B}^1)}^2\\
&\lesssim \int_0^\infty\left(\sum_{k+1\ge 2q}2^{2q}\|\hat{\D}_{q,k}\lambda_{-}(D)^{-\f12}\mathcal{H}_1\|_{L^2}\right)\left(\sum_{k+1\ge 2q}\|\hat{\D}_{q,k}\lambda_{-}(D)^{-\f12}\mathcal{H}_1\|_{L^2}\right)dt\\
&\quad+\|\det\mathcal{A}\|_{L^2(\hat{B}^1)}^2\\
&\lesssim \|\lambda_{-}(D)^{-\f12}\mathcal{H}_1\|_{L^\infty(\tilde{B}^{0,1})}\int_0^\infty\left(\sum_{k+1\ge 2q}2^{2q}\|\hat{\D}_{q,k}\lambda_{-}(D)^{-\f12}\mathcal{H}_1\|_{L^2}\right)dt\\
&\quad+\|\det\mathcal{A}\|_{L^2(\hat{B}^1)}^2,
 \end{split}
\end{equation*}
and the desired estimate \eqref{320a} follows from Proposition \ref{p00}.

\textit{Case 2: $k+1<2q$.} In this case, the Cauchy-Schwarz inequality yields
\begin{equation*}
 \begin{split}
&\eqref{324a}\lesssim\int_0^\infty\left(\sum_{k+1< 2q}\|\hat{\D}_{q,k}\mathcal{H}_1\|^{\f12}_{L^2}\|\hat{\D}_{q,k}\Dv\mathcal{A}\|_{L^2}^{\f12}\right)^2dt\\
&\lesssim \int_0^\infty\left(\sum_{k+1< 2q}\|\hat{\D}_{q,k}\mathcal{H}_1\|_{L^2}\right)\left(\sum_{k+1< 2q}\|\hat{\D}_{q,k}\Dv\mathcal{A}\|_{L^2}\right)dt\\
&\lesssim \|\mathcal{A}\|_{L^\infty(\hat{B}^{1})}\int_0^\infty\left(\sum_{k+1< 2q}\|\hat{\D}_{q,k}\mathcal{H}_1\|_{L^2}\right)dt,
 \end{split}
\end{equation*}
and the desired estimate \eqref{320a} follows from Proposition \ref{p00}.

\texttt{Estimate of $\eqref{324b}$.} We consider two cases: $q\le 1$ and $q>1$.

\textit{Case 1: $q\le 1$.} 
According to \eqref{324c2}, Holder's inequality for series implies
\begin{equation*}
 \begin{split}
\eqref{324b}&\le\int_0^\infty\left(\sum_{q\le 1}\|\Lambda\lambda_{+}(D)^{-\f12}\hat{\D}_{q,k}\mathcal{H}_2\|_{L^2}\right)^2dt+\|\det\mathcal{A}\|_{L^2(\hat{B}^1)}^2\\
&\lesssim\int_0^\infty\left(\sum_{q\le 1}2^q\|\hat{\D}_{q,k}\lambda_{+}(D)^{-\f12}\mathcal{H}_2\|_{L^2}^{1/2}\|\hat{\D}_{q,k}\lambda_{+}(D)^{-\f12}\mathcal{H}_2\|_{L^2}^{1/2}\right)^2dt\\
&\quad+\|\det\mathcal{A}\|_{L^2(\hat{B}^1)}^2\\
&\lesssim \int_0^\infty\left(\sum_{q\le 1}2^{2q}\|\hat{\D}_{q,k}\lambda_{+}(D)^{-\f12}\mathcal{H}_2\|_{L^2}\right)\left(\sum_{q\le 1}\|\hat{\D}_{q,k}\lambda_{+}(D)^{-\f12}\mathcal{H}_2\|_{L^2}\right)dt\\
&\quad+\|\det\mathcal{A}\|_{L^2(\hat{B}^1)}^2\\
&\lesssim \|\lambda_{+}(D)^{-\f12}\mathcal{H}_2\|_{L^\infty(\hat{B}^{0}\cap\hat{B}^1)}\int_0^\infty\left(\sum_{q\le 1}2^{2q}\|\hat{\D}_{q,k}\lambda_{+}(D)^{-\f12}\mathcal{H}_2\|_{L^2}\right)dt\\
&\quad+\|\det\mathcal{A}\|_{L^2(\hat{B}^1)}^2,
 \end{split}
\end{equation*}
and the desired estimate \eqref{320a} follows from Proposition \ref{p111}.

\textit{Case 2: $q>1$.} In this case, the Cauchy-Swartz inequality yields
\begin{equation*}
 \begin{split}
&\eqref{324b}\lesssim\int_0^\infty\left(\sum_{q>1}\|\hat{\D}_{q,k}\mathcal{H}_2\|^{\f12}_{L^2}\|\hat{\D}_{q,k}\Dv\mathcal{A}\|_{L^2}^{\f12}\right)^2dt\\
&\lesssim \int_0^\infty\left(\sum_{q>1}\|\hat{\D}_{q,k}\mathcal{H}_2\|_{L^2}\right)\left(\sum_{q>1}\|\hat{\D}_{q,k}\Dv\mathcal{A}\|_{L^2}\right)dt\\
&\lesssim \|\mathcal{A}\|_{L^\infty(\hat{B}^{1})}\int_0^\infty\left(\sum_{q>1}\|\hat{\D}_{q,k}\mathcal{H}_2\|_{L^2}\right)dt,
 \end{split}
\end{equation*}
and the desired estimate \eqref{320a} follows from Proposition \ref{p111}.
\end{proof}

Now it is ready to state the dissipations for $b$ and $\H$.
\begin{Lemma}\label{l2}
Assume that $\sup_{t\in[0,T]}\|\mathcal{A}(t)\|_{\hat{B}^{1}}\le \epsilon$ for $0\le T\le\infty$ and sufficiently small $\epsilon$. Then for solutions $(b,\u,\H)$ of \eqref{e2}, there holds
\begin{equation}\label{327}
 \begin{split}
&\left\|b\right\|_{L_T^2(\hat{B}^1)}+\left\|\H\right\|_{L_T^2(\hat{B}^1)}\\
&\quad\lesssim \|\mathcal{A}\|_{L^\infty_T(\hat{B}^1)}+\|\u(0)\|_{\hat{B}^0}+\|\lambda_{-}(D)^{-\f12}\mathcal{H}_1(0)\|_{\tilde{B}^{0,1}}+\|\lambda_{+}(D)^{-\f12}\mathcal{H}_2(0)\|_{\hat{B}^{0}\cap\hat{B}^1}\\
&\qquad+\|\lambda_{-}(D)^{-\f12}\mathcal{Q}_1\|_{L^1(\tilde{B}^{0,1})}+\|\lambda_{+}(D)^{-\f12}\mathcal{Q}_2\|_{L^1(\hat{B}^{0}\cap\hat{B}^1)}+\|\mathcal{O}\|_{L^1(\hat{B}^{0})}\\
&\qquad+\|\u\|_{L^1(\hat{B}^2)}\Big(\|\lambda_{-}(D)^{-\f12}\mathcal{H}_1\|_{L^\infty(\tilde{B}^{0,1})}+\|\lambda_{+}(D)^{-\f12}\mathcal{H}_2\|_{L^\infty(\hat{B}^{0}\cap\hat{B}^1)}+\|\u\|_{L^\infty(\hat{B}^0)}\Big).
 \end{split}
\end{equation}
\end{Lemma}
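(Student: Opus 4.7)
The strategy is to extract $\|\H\|_{L^2_T(\hat{B}^1)}+\|b\|_{L^2_T(\hat{B}^1)}$ from the pointwise-in-$(q,k)$ identity \eqref{323}, invoke Lemma \ref{l21} for the main cross term, and close by a smallness-driven absorption using $\|\mathcal{A}\|_{L^\infty_T(\hat{B}^1)}\le \epsilon$.

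The key preliminary step is to rewrite the cubic cross term in \eqref{323}. Since $\mathcal{P}$ is symmetric with $\mathcal{P}^2=I$, one has
\begin{equation*}
(\hat{\D}_{q,k}\mathcal{P}\nabla\det\mathcal{A}|\mathcal{P}\hat{\D}_{q,k}\Dv\mathcal{A})=(\hat{\D}_{q,k}\nabla\det\mathcal{A}|\hat{\D}_{q,k}\Dv\mathcal{A}),
\end{equation*}
and an integration by parts using $\Dv\mathcal{A}=\D\alpha$ together with $\tr\mathcal{A}=b-\det\mathcal{A}$ from \eqref{16b} yields
\begin{equation*}
(\hat{\D}_{q,k}\nabla\det\mathcal{A}|\hat{\D}_{q,k}\Dv\mathcal{A})=(\Lambda\hat{\D}_{q,k}\det\mathcal{A}|\Lambda\hat{\D}_{q,k}b)-\|\Lambda\hat{\D}_{q,k}\det\mathcal{A}\|_{L^2}^2.
\end{equation*}
Substituting back into \eqref{323}, the $\|\Lambda\hat{\D}_{q,k}\det\mathcal{A}\|_{L^2}^2$ terms cancel, and a weighted Young inequality applied to the remaining scalar pairing $(\Lambda\hat{\D}_{q,k}\det\mathcal{A}|\Lambda\hat{\D}_{q,k}b)$ absorbs a fraction of $\|\Lambda\hat{\D}_{q,k}b\|_{L^2}^2$ into the left, producing
\begin{equation*}
\|\Lambda\hat{\D}_{q,k}\H\|_{L^2}^2+\|\Lambda\hat{\D}_{q,k}b\|_{L^2}^2\lesssim \left|(\hat{\D}_{q,k}\mathcal{H}|\mathcal{P}\hat{\D}_{q,k}\Dv\mathcal{A})\right|+\|\Lambda\hat{\D}_{q,k}\det\mathcal{A}\|_{L^2}^2.
\end{equation*}

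Next I would take square roots (using $\sqrt{a+b}\le\sqrt{a}+\sqrt{b}$), apply the Bernstein-type equivalence $\|\Lambda\hat{\D}_{q,k}\phi\|_{L^2}\approx 2^q\|\hat{\D}_{q,k}\phi\|_{L^2}$, sum over $(q,k)\in\mathbb{Z}^2$, and take the $L^2(0,T)$ norm to obtain
\begin{equation*}
\|\H\|_{L^2_T(\hat{B}^1)}+\|b\|_{L^2_T(\hat{B}^1)}\lesssim \left\|\sum_{q,k\in\mathbb{Z}}\left|(\hat{\D}_{q,k}\mathcal{H}|\mathcal{P}\hat{\D}_{q,k}\Dv\mathcal{A})\right|^{1/2}\right\|_{L^2_T}+\|\det\mathcal{A}\|_{L^2_T(\hat{B}^1)}.
\end{equation*}
Lemma \ref{l21} bounds the first term on the right by the expression in \eqref{320a}, which still contains an extra $\|\det\mathcal{A}\|_{L^2(\hat{B}^1)}$ summand. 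To dispose of both $\det\mathcal{A}$ contributions, I would use \eqref{16a}--\eqref{16b} to write $\det\mathcal{A}=\mathcal{A}_{11}\H_1+\mathcal{A}_{12}\H_2$; Proposition \ref{p2} with $s=t=1$ then gives $\|\det\mathcal{A}\|_{\hat{B}^1}\lesssim \|\mathcal{A}\|_{\hat{B}^1}\|\H\|_{\hat{B}^1}$, and hence
\begin{equation*}
\|\det\mathcal{A}\|_{L^2_T(\hat{B}^1)}\lesssim \|\mathcal{A}\|_{L^\infty_T(\hat{B}^1)}\|\H\|_{L^2_T(\hat{B}^1)}\le \epsilon\|\H\|_{L^2_T(\hat{B}^1)},
\end{equation*}
which is absorbed into the left-hand side for $\epsilon$ sufficiently small, producing \eqref{327}.

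The main obstacle is the treatment of the cubic term $(\hat{\D}_{q,k}\mathcal{P}\nabla\det\mathcal{A}|\mathcal{P}\hat{\D}_{q,k}\Dv\mathcal{A})$. A naive Cauchy--Schwarz estimate would couple $\|\det\mathcal{A}\|_{\hat{B}^1}^{1/2}$ to $\|\mathcal{A}\|_{\hat{B}^1}^{1/2}$, and no $L^2$-in-time control on $\mathcal{A}$ itself is available from the a priori estimates on $\H,b,\u$ alone. The cancellation made possible by $\mathcal{P}^2=I$ combined with the identity $\tr\mathcal{A}=b-\det\mathcal{A}$ is what converts this cubic contribution into a scalar pairing whose bad factor $\|\Lambda\hat{\D}_{q,k}b\|_{L^2}$ already sits on the left-hand side of \eqref{323}, permitting absorption by Young's inequality and reducing the remaining work to a product estimate.
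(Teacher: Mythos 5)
Your proposal is correct and follows essentially the same route as the paper's proof: the identity $(\hat{\D}_{q,k}\mathcal{P}\nabla\det\mathcal{A}|\mathcal{P}\hat{\D}_{q,k}\Dv\mathcal{A})=(\Lambda\hat{\D}_{q,k}\det\mathcal{A}|\Lambda\hat{\D}_{q,k}b)-\|\Lambda\hat{\D}_{q,k}\det\mathcal{A}\|_{L^2}^2$ via $\mathcal{P}^2=I$ and $\tr\mathcal{A}=b-\det\mathcal{A}$, absorption of the resulting pairing into the left-hand side, Lemma \ref{l21} for the main term, and the product estimate $\|\det\mathcal{A}\|_{L^2_T(\hat{B}^1)}\lesssim\|\mathcal{A}\|_{L^\infty_T(\hat{B}^1)}\|\H\|_{L^2_T(\hat{B}^1)}$ closed by the smallness of $\epsilon$. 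The only difference is presentational: you perform the Young-inequality absorption at the level of each dyadic block before summing, while the paper absorbs after integrating in time.
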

\begin{proof}
Since $\mathcal{P}^2=I$ and $\mathcal{P}^\top=\mathcal{P}$, one has, using \eqref{16b}
\begin{equation*}
 \begin{split}
\Big(\hat{\D}_{q,k}\mathcal{P}\nabla\det\mathcal{A}|\mathcal{P}\hat{\D}_{q,k}\Dv \mathcal{A}\Big)&=\Big(\hat{\D}_{q,k}\nabla\det\mathcal{A}|\hat{\D}_{q,k}\Dv \mathcal{A}\Big)\\
&=-\Big(\hat{\D}_{q,k}\det\mathcal{A}|\D\hat{\D}_{q,k}\tr \mathcal{A}\Big)\\
&=\Big(\Lambda\hat{\D}_{q,k}\det\mathcal{A}|\Lambda\hat{\D}_{q,k}b\Big)-\Big\|\Lambda\hat{\D}_{q,k}\det\mathcal{A}\Big\|^2_{L^2}.
 \end{split}
\end{equation*}
This identity, combining with \eqref{323}, yields
\begin{equation}\label{326a}
 \begin{split}
&\left\|b\right\|_{L_T^2(\hat{B}^1)}^2+\left\|\H\right\|_{L_T^2(\hat{B}^1)}^2\\
&\quad\lesssim \int_0^\infty\left(\sum_{q,k\in\mathbb{Z}}\left|\Big(\hat{\D}_{q,k}\mathcal{P}\mathcal{H}|\mathcal{P}\hat{\D}_{q,k}\Dv \mathcal{A}\Big)\right|^{\f12}\right)^2dt
+\|\det\mathcal{A}\|_{L^2_T(\hat{B}^1)}^2.
 \end{split}
\end{equation}

On the other hand, the identity \eqref{16a} implies
\begin{equation*}
 \begin{split}
\det\mathcal{A}=\mathcal{A}_{11}\mathcal{A}_{22}-\mathcal{A}_{12}\mathcal{A}_{21}=\mathcal{A}_{11}\H_1+\mathcal{A}_{12}\H_2,
 \end{split}
\end{equation*}
and hence Proposition \ref{p2} yields
\begin{equation}\label{326}
\begin{split}
 \|\det\mathcal{A}\|_{L^2_T(\hat{B}^1)}\lesssim\|\mathcal{A}\|_{L_T^\infty(\hat{B}^1)}\|\H\|_{L^2_T(\hat{B}^1)}.
\end{split}
\end{equation}
The desired estimate \eqref{327} then follows from \eqref{326a}, \eqref{326}, Lemma \ref{l21}, and the assumption \\$\sup_{t\in[0,T]}\|\mathcal{A}(t)\|_{\hat{B}^{1}}\le \epsilon$ with $C\epsilon^2\le 1/2$.
\end{proof}

\bigskip\bigskip
%%%%%%%%%%%%%%%%%%%%%%%%%%%%%%%%%%%%%%%%%%%%%%%%%%%%%%%%%%%%%%%%%%%%%%%%%%%%%%%%%%%%%%%%%%%%%%%%%%%%%%%%%%%%%%%%%%%%%%%%%%%%%%%%

\section{Proof of Theorem \ref{MT}}

This section aims at the proof of Theorem \ref{MT}, more precisely the proof of ($\mathfrak{G}$). To begin with, we establish the estimate for $\|\mathcal{A}\|_{L^\infty_t(\hat{B}^{1})}$.
\begin{Lemma}\label{l6a}
\begin{equation*}
\|\mathcal{A}\|_{L^\infty_t(\hat{B}^{1})}\lesssim X(0)+X(t)^2.
\end{equation*}
\end{Lemma}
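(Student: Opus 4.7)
The strategy is to derive a transport equation for $\mathcal{A}=A-I$, perform a standard Besov energy estimate at the level of the hybrid blocks $\hat{\D}_{q,k}$, and close the resulting inequality by substituting the dissipation bounds on $\u$ already obtained in Section 4. Rewriting \eqref{17d} in terms of the perturbation,
\begin{equation*}
\partial_t\mathcal{A}+\u\cdot\nabla\mathcal{A}=-\nabla\u-\mathcal{A}\nabla\u.
\end{equation*}
The presence of the linear source $-\nabla\u$ is the reason an auxiliary estimate on $\|\u\|_{L^1_t(\hat{B}^2)}$ will be needed; the term $-\mathcal{A}\nabla\u$ is already quadratic.

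First I would apply $\hat{\D}_{q,k}$ to the equation, pair with $\hat{\D}_{q,k}\mathcal{A}$ in $L^2$, and integrate the convective term by parts (the resulting $\Dv\u$ factor is harmless since $\|\Dv\u\|_{L^\infty}\lesssim \|\u\|_{\hat{B}^2}$). A paraproduct-based commutator bound in the spirit of Lemma \ref{cl1}, giving $\|[\hat{\D}_{q,k},\u\cdot\nabla]\mathcal{A}\|_{L^2}\lesssim \alpha_{q,k}2^{-q}\|\u\|_{\hat{B}^2}\|\mathcal{A}\|_{\hat{B}^1}$, together with Proposition \ref{p2} (with $s=t=1$) to control $\|\mathcal{A}\nabla\u\|_{\hat{B}^1}\lesssim \|\mathcal{A}\|_{\hat{B}^1}\|\u\|_{\hat{B}^2}$, leads after multiplication by $2^q$, summation over $q,k\in\mathbb{Z}$, and integration in time to
\begin{equation*}
\|\mathcal{A}\|_{L^\infty_t(\hat{B}^1)}\lesssim \|\mathcal{A}_0\|_{\hat{B}^1}+\|\u\|_{L^1_t(\hat{B}^2)}\bigl(1+\|\mathcal{A}\|_{L^\infty_t(\hat{B}^1)}\bigr).
\end{equation*}
Under the bootstrap assumption that $X(t)$ is small, the quadratic piece $\|\u\|_{L^1_t(\hat{B}^2)}\,\|\mathcal{A}\|_{L^\infty_t(\hat{B}^1)}$ is absorbed into the left-hand side, leaving
\begin{equation*}
\|\mathcal{A}\|_{L^\infty_t(\hat{B}^1)}\lesssim \|\mathcal{A}_0\|_{\hat{B}^1}+\|\u\|_{L^1_t(\hat{B}^2)}.
\end{equation*}

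The remaining and main obstacle is to turn the \emph{linear} source $\|\u\|_{L^1_t(\hat{B}^2)}$ into a quantity of size $X(0)+X(t)^2$. To this end I would decompose $\u=\mathcal{P}(u,w)^\top$ (recall $\mathcal{P}^2=I$) and apply Propositions \ref{p00} and \ref{p111} to $u$ and $w$ respectively. On the right-hand sides of \eqref{82} and \eqref{821}, the initial-data contributions are all dominated by $X(0)$; the cross-products $\|\u\|_{L^1(\hat{B}^2)}\cdot(\text{norms appearing in }X(t))$ are absorbed using smallness; and the genuinely nonlinear terms $\mathcal{O}_i,\mathcal{Q}_i$ (cubic/quadratic combinations of $b,\u,\H$) are each $O(X(t)^2)$ after applying Proposition \ref{p2}, Lemmas \ref{lee} and \ref{le}, together with the identifications $\H_1=\mathcal{A}_{22}$, $\H_2=-\mathcal{A}_{21}$, $b=\tr\mathcal{A}+\det\mathcal{A}$ and the smallness of $\|\mathcal{A}\|_{\hat{B}^1}$. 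The hardest step is precisely this last bookkeeping, which is where the hybrid structure of $\tilde{B}^{0,1}$ enters essentially. Combining the resulting estimate $\|\u\|_{L^1_t(\hat{B}^2)}\lesssim X(0)+X(t)^2$ with the display above yields $\|\mathcal{A}\|_{L^\infty_t(\hat{B}^1)}\lesssim X(0)+X(t)^2$, as claimed.
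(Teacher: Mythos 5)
Your core argument coincides with the paper's: both start from the transport equation $\partial_t\mathcal{A}+\u\cdot\nabla\mathcal{A}+\nabla\u=-\mathcal{A}\nabla\u$ derived from \eqref{17d}, run a block-wise energy estimate on $\hat{\D}_{q,k}\mathcal{A}$ in which the convective term is handled by an integration-by-parts/commutator bound (the paper uses Lemma \ref{cl}, you use a Lemma \ref{cl1}-type estimate with the same scaling), and control $\|\mathcal{A}\nabla\u\|_{\hat{B}^1}$ via Proposition \ref{p2}; up to this point the two proofs are the same. The genuine difference is the treatment of the linear source $\nabla\u$. The paper's \eqref{65b} keeps $\|\nabla\u\|_{L^1(\hat{B}^1)}$ on the right-hand side and then omits it without comment in the concluding display, so, read literally, the paper's proof only yields $\|\mathcal{A}\|_{L^\infty_t(\hat{B}^1)}\lesssim \|\mathcal{A}_0\|_{\hat{B}^1}+\|\u\|_{L^1_t(\hat{B}^2)}+X(t)^2$; the stated conclusion needs the additional bound $\|\u\|_{L^1_t(\hat{B}^2)}\lesssim X(0)+X(t)^2$, which in the paper is supplied only by Lemma \ref{l6} (and the overall argument still closes, since summing Lemmas \ref{l6a} and \ref{l6} absorbs this term when proving $(\mathfrak{G})$). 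You make that dependence explicit by importing Propositions \ref{p00} and \ref{p111} together with the $\mathcal{O}_i,\mathcal{Q}_i$ estimates into the present proof; this is legitimate and not circular, since those nonlinear estimates nowhere rely on the $\mathcal{A}$ bound, but it effectively folds the whole of Lemma \ref{l6} into this lemma. A more economical route is to state the intermediate estimate with $\|\u\|_{L^1_t(\hat{B}^2)}$ on the right and let it be absorbed at the final summation. Be aware that the step you describe as bookkeeping is the real work: in particular the bound $\|\lambda_{-}(D)^{-\f12}\mathcal{Q}_1\|_{L^1(\tilde{B}^{0,1})}\lesssim X(t)^2$ requires the decomposition \eqref{6100a}--\eqref{6100b} exploiting the precise structure of $\mathcal{P}(\xi)$ and the distinction between the $\tilde{B}^{0,1}$ and $\hat{B}^0\cap\hat{B}^1$ product laws, and should not be waved through.
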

\begin{proof}
From \eqref{17d}, the function $\mathcal{A}$ satisfies a tranport equation
\begin{equation*}
 \partial_t\mathcal{A}+\u\cdot\nabla\mathcal{A}+\nabla\u=-\mathcal{A}\nabla\u,
\end{equation*}
and hence according to Lemma \ref{cl}, there holds 
\begin{equation}\label{65}
 \begin{split}
\f12\f{d}{dt}\|\Lambda\hat{\D}_{q,k}\mathcal{A}\|_{L^2}^2&=-(\Lambda\hat{\D}_{q,k}\nabla\u|\Lambda\hat{\D}_{q,k}\mathcal{A})-(\Lambda\hat{\D}_{q,k}(\u\cdot\nabla\mathcal{A})|\Lambda\hat{\D}_{q,k}\mathcal{A})\\
&\quad-(\Lambda\hat{\D}_{q,k}(\mathcal{A}\nabla\u)|\Lambda\hat{\D}_{q,k}\mathcal{A})\\
&\lesssim \|\Lambda\hat{\D}_{q,k}\mathcal{A}\|_{L^2}\Big(\|\Lambda\hat{\D}_{q,k}\nabla\u\|_{L^2}+\|\Lambda\hat{\D}_{q,k}(\mathcal{A}\nabla\u)\|_{L^2}\\
&\quad+\alpha_{q,k}\|\u\|_{\hat{B}^2}\|\mathcal{A}\|_{\hat{B}^{1}}\Big).
\end{split}
\end{equation}
Summing \eqref{65} over $q,q_1,q_2\in\mathbb{Z}$ and integrating over $t$, one obtains
\begin{equation}\label{65b}
 \begin{split}
\sup_{t\ge 0}\|\mathcal{A}(t)\|_{\hat{B}^1}&\lesssim \|\mathcal{A}_0\|_{\hat{B}^1}
+\|\nabla\u\|_{L^1(\hat{B}^{1})}+\|\mathcal{A}\nabla\u\|_{L^1(\hat{B}^{1})}+\|\u\|_{L^1(\hat{B}^2)}\|\mathcal{A}\|_{L^\infty(\hat{B}^{1})}.
 \end{split}
\end{equation}
One has
\begin{equation*}
 \begin{split}
\|\mathcal{A}\nabla\u\|_{L^1(\hat{B}^{1})}&\lesssim\|\mathcal{A}\|_{L^\infty(\hat{B}^{1})}\|\nabla\u\|_{L^1(\hat{B}^1)}\lesssim\|\mathcal{A}\|_{L^\infty(\hat{B}^{1})}\|\u\|_{L^1(\hat{B}^2)}\lesssim X(t)^2.
 \end{split}
\end{equation*}
Substituting this into \eqref{65b}, one has
\begin{equation*}
 \begin{split}
\|\mathcal{A}(t)\|_{\hat{B}^{1}}&\lesssim \|\mathcal{A}_0\|_{\hat{B}^{1}}+X(t)^2.
 \end{split}
\end{equation*}
Taking the $L^\infty$ norm over time gives the desired estimate.
\end{proof}

From now on, due to the smallness of the initial data and \eqref{le8}, one can assume that
$$\|\r\|_{L^\infty(L^\infty)}\lesssim 1.$$

Next we turn to the estimate for the remaining part of $X(t)$.
\begin{Lemma}\label{l6}
\begin{equation*}
 \begin{split}
 X(t)-\|\mathcal{A}\|_{L^\infty_t(\hat{B}^{1})}\lesssim X(0)+X(t)^2. 
 \end{split}
\end{equation*}
\end{Lemma}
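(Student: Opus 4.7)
The plan is to combine the three dissipation estimates already established (Propositions \ref{p00} and \ref{p111} together with Lemma \ref{l2}) and then control every nonlinear source term in $L^1$ in time by $X(t)^2$. Summing Proposition \ref{p00} and Proposition \ref{p111} and using $\|\mathcal{P}\u\|_{L^2}=\|\u\|_{L^2}$ (Plancherel), I get a bound of the form
\begin{equation*}
\begin{split}
&\|\u\|_{L_t^\infty(\hat{B}^0)}+\bigl\|\lambda_{-}^{-\f12}(D)\mathcal{H}_1\bigr\|_{L_t^\infty(\tilde{B}^{0,1})}+\bigl\|\lambda_{+}^{-\f12}(D)\mathcal{H}_2\bigr\|_{L_t^\infty(\hat{B}^0\cap\hat{B}^1)}+\|\u\|_{L_t^1(\hat{B}^2)}\\
&\quad\lesssim X(0)+\|\mathcal{O}\|_{L^1_t(\hat{B}^0)}+\bigl\|\lambda_{-}^{-\f12}(D)\mathcal{Q}_1\bigr\|_{L^1_t(\tilde{B}^{0,1})}+\bigl\|\lambda_{+}^{-\f12}(D)\mathcal{Q}_2\bigr\|_{L^1_t(\hat{B}^0\cap\hat{B}^1)}+X(t)^2,
\end{split}
\end{equation*}
where the last $X(t)^2$ swallows the mixed product $\|\u\|_{L^1(\hat{B}^2)}\cdot(\cdots)$ appearing on the right of \eqref{82} and \eqref{821}. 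Lemma \ref{l2} provides the matching $L_t^2(\hat{B}^1)$ dissipation for $b$ and $\H$, again with the same nonlinear source terms on the right. Thus everything reduces to proving
\begin{equation*}
\|\mathcal{O}\|_{L^1_t(\hat{B}^0)}+\bigl\|\lambda_{-}^{-\f12}(D)\mathcal{Q}_1\bigr\|_{L^1_t(\tilde{B}^{0,1})}+\bigl\|\lambda_{+}^{-\f12}(D)\mathcal{Q}_2\bigr\|_{L^1_t(\hat{B}^0\cap\hat{B}^1)}\lesssim X(t)^2.
\end{equation*}

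For $\mathcal{O}=-\mathcal{P}(\u\cdot\nabla\u)+\u\cdot\nabla\mathcal{P}\u+\mathcal{P}\mathcal{L}$, the first two terms form a commutator $[\mathcal{P},\u\cdot\nabla]\u$ handled directly by Lemma \ref{cl1} (and $\mathcal{P}$ is a bounded Fourier multiplier on $\hat{B}^s$ because its symbol is an orthonormal matrix uniformly in $\xi$). For $\mathcal{L}$, I would write $1/\rho-1=-b/\rho$, bound $1/\rho$ in $L^\infty$ via the smallness assumption and Lemma \ref{le8}, and then split $\mathcal{L}$ into products of the form $b\cdot(\D\u,\nabla\H,\nabla\H_1)$, $b\nabla b$, and $\H\cdot\nabla\H$. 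Each of these is controlled by Proposition \ref{p2} together with the interpolation-type inequality $\|b\|_{L^2(\hat{B}^1)}+\|\H\|_{L^2(\hat{B}^1)}\lesssim X(t)$ and $\|b\|_{L^\infty(\hat{B}^0\cap\hat{B}^1)}+\|\H\|_{L^\infty(\hat{B}^0\cap\hat{B}^1)}\lesssim X(t)$ coming from \eqref{le8}. A typical output is $\|b\,\D\u\|_{L^1_t(\hat{B}^0)}\lesssim \|b\|_{L^\infty_t(\hat{B}^1)}\|\u\|_{L^1_t(\hat{B}^2)}\lesssim X(t)^2$, and similarly for the other pieces.

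The main obstacle will be the estimate of $\mathcal{Q}_1$ and $\mathcal{Q}_2$, because $\mathcal{Q}$ carries one extra derivative through $\nabla(\mathcal{K}+\mathcal{M}_1)-\partial_{x_1}\mathcal{M}$ and through $\nabla(\u\cdot\nabla(b+\H_1))-\partial_{x_1}(\u\cdot\nabla\H)$. The trick is that $\lambda_{-}^{-\f12}(\xi)\approx|\xi_1|^{-1}$ and $\lambda_{+}^{-\f12}(\xi)\approx|\xi|^{-1}$, so $\lambda_{-}^{-\f12}\partial_{x_1}$ and $\lambda_{+}^{-\f12}\nabla$ are bounded zero-order operators (the relevant symbols are precisely of the type \eqref{le22}). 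After moving one $\nabla$ (resp.\ $\partial_{x_1}$) inside and absorbing it in $\lambda_{\pm}^{-\f12}$, the remaining transport piece is handled by writing
\begin{equation*}
\nabla(\u\cdot\nabla f)-\partial_{x_1}(\u\cdot\nabla g)=\u\cdot\nabla(\nabla f-\partial_{x_1}g)+R(\nabla\u,\nabla f,\nabla g),
\end{equation*}
where the first term, applied with $f=b+\H_1$, $g=\H$, recombines into $\u\cdot\nabla\mathfrak{H}$, and is treated by the commutator estimate Lemma \ref{cl1}; the remainder $R$ is a genuine product and is controlled by Lemma \ref{lee} and Proposition \ref{p2}. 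For the $\mathcal{K}+\mathcal{M}_1$ contribution I use $\mathcal{K}=-b\,\Dv\u$ and $\mathcal{M}=\H\cdot\nabla\u-\H\,\Dv\u$ together with Lemma \ref{lee} to conclude
\begin{equation*}
\bigl\|\lambda_{-}^{-\f12}(D)\mathcal{Q}_1\bigr\|_{L^1_t(\tilde{B}^{0,1})}+\bigl\|\lambda_{+}^{-\f12}(D)\mathcal{Q}_2\bigr\|_{L^1_t(\hat{B}^0\cap\hat{B}^1)}\lesssim X(t)^2.
\end{equation*}

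Collecting everything yields $X(t)-\|\mathcal{A}\|_{L^\infty_t(\hat{B}^1)}\lesssim X(0)+X(t)^2$, as required; combined with Lemma \ref{l6a} this closes the bootstrap inequality $(\mathfrak{G})$ and hence Theorem \ref{MT}.
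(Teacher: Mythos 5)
Your overall architecture is the paper's: sum Propositions \ref{p00} and \ref{p111} with Lemma \ref{l2}, absorb the mixed terms into $X(t)^2$, and reduce everything to showing $\|\mathcal{O}\|_{L^1(\hat{B}^0)}+\|\lambda_{-}^{-\f12}(D)\mathcal{Q}_1\|_{L^1(\tilde{B}^{0,1})}+\|\lambda_{+}^{-\f12}(D)\mathcal{Q}_2\|_{L^1(\hat{B}^0\cap\hat{B}^1)}\lesssim X(t)^2$. Your treatment of $\mathcal{O}$, of $\mathcal{Q}_2$, and of the transport part of $\mathcal{Q}_1$ (recombining $\nabla(\u\cdot\nabla(b+\H_1))-\partial_{x_1}(\u\cdot\nabla\H)$ into $\u\cdot\nabla\mathfrak{H}$ plus a commutator handled by Lemma \ref{cl1}) matches the paper. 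The gap is in the remaining piece of $\mathcal{Q}_1$, namely $\lambda_{-}^{-\f12}(D)\big[\mathcal{P}\big(\nabla(\mathcal{K}+\mathcal{M}_1)-\partial_{x_1}\mathcal{M}\big)\big]_1$, where you claim ``Lemma \ref{lee}'' suffices. It does not, for two reasons. First, $\lambda_{-}^{-\f12}(\xi)\approx|\xi_1|^{-1}$ only compensates a $\partial_{x_1}$, not the full gradient $\nabla(\mathcal{K}+\mathcal{M}_1)$; the fact that the \emph{first} component survives at all rests on the algebraic cancellation $\xi_1\mathcal{F}(\mathcal{K}+\mathcal{M}_1)-\xi_1\mathcal{F}\mathcal{M}_1=\xi_1\mathcal{F}\mathcal{K}$ in the first slot of the vector and the factor $\xi_1$ in the off-diagonal entries of $\mathcal{P}(\xi)$, which together produce (up to harmless terms with an extra $r(\xi)\sim\xi_1^2/|\xi|^2$) the specific combination $\xi_1\xi_2\mathcal{F}(\mathcal{M}_1-\mathcal{K})-\xi_1^2\mathcal{F}\mathcal{M}_2$. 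Without writing this out you cannot even identify which quadratic expression needs an $L^1_t(\tilde{B}^{0,1})$ bound.

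Second, and more seriously, once you arrive at $\mathcal{M}_1-\mathcal{K}=(b-\H_1)\Dv\u+\H_2\partial_{x_2}\u_1+\H_1\partial_{x_1}\u_1$, Lemma \ref{lee} handles only the first two terms, because Lemma \ref{le} controls $b-\H_1$ and $\H_2$ in $\tilde{B}^{0,1}$ but \emph{not} $\H_1$ (nor $b+\H_1$), and $\nabla\u$ is not controlled in $L^1_t(\tilde{B}^{0,1})$ by $X(t)$ either (the $\check{B}^1$ part of $\|\Dv\u\|_{\tilde{B}^{0,1}}$ scales like $2^{3q-k}\|\hat{\D}_{q,k}\u\|_{L^2}$, which is not dominated by $\|\u\|_{\hat{B}^2}$, and $\|\u\|_{L^1_t(\hat{B}^1)}$ is not in $X$). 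The term $\H_1\partial_{x_1}\u_1$ must instead be estimated by splitting $\tilde{B}^{0,1}=\hat{B}^0\cap\check{B}^1$ and using the product law \eqref{27b} in $\check{B}^1$, where the extra $\partial_{x_1}$ on $\u_1$ supplies the factor $2^{k}$ that converts $\|\partial_{x_1}\u_1\|_{\check{B}^1}$ into $\|\u\|_{\hat{B}^2}$; this is exactly the step \eqref{6105} in the paper, and it is the one place where the anisotropic structure of the whole scheme is actually used. Your proposal as written would stall at this term.
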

\begin{proof}
In view of Proposition \ref{p00}, Proposition \ref{p111}, Lemma \ref{l2} and Lemma \ref{l6a}, one has
\begin{equation}\label{61}
\begin{split}
& X(t)-\|\mathcal{A}\|_{L^\infty_t(\hat{B}^{1})}\\
&\quad\le C\Big(X(0)+\|\mathcal{O}\|_{L^1(\hat{B}^{0})}+\|\lambda_{-}(D)^{-\f12}\mathcal{Q}_1\|_{L^1(\tilde{B}^{0,1})}+\|\lambda_{+}(D)^{-\f12}\mathcal{Q}_2\|_{L^1(\hat{B}^{0}\cap\hat{B}^1)}+X(t)^2\Big).
\end{split}
\end{equation}

\texttt{Estimate of $\|\mathcal{O}\|_{L^1(\hat{B}^0)}$.} The estimate for $-\mathcal{P}(\u\cdot\nabla\u)+\u\cdot\nabla\mathcal{P}\u$ is straightforward since
\begin{equation}\label{62a}
 \begin{split}
\|-\mathcal{P}(\u\cdot\nabla\u)+\u\cdot\nabla\mathcal{P}\u\|_{L^1(\hat{B}^0)}&\lesssim \|\u\|_{L^2(\hat{B}^1)}\|\nabla\u\|_{L^2(\hat{B}^0)}\\
&\lesssim \|\u\|_{L^2(\hat{B}^1)}^2\lesssim \|\u\|_{L^1(\hat{B}^2)}\|\u\|_{L^\infty(\hat{B}^0)}\lesssim X(t)^2
 \end{split}
\end{equation}
since the operator $\mathcal{P}(\xi)$ is orthonormal.

We are left with the estimate of $\|\mathcal{P}\mathcal{L}\|_{L^1(\hat{B}^0)}$, which is equivalent to the estimate of $\|\mathcal{L}\|_{L^1(\hat{B}^0)}$. The estimates for $\H\cdot\nabla\H$ and $\nabla|\H|^2$ 
are straightforward since
\begin{equation*}
\|\H\cdot\nabla\H\|_{L^1(\hat{B}^0)}+\|\nabla|\H|^2\|_{L^1(\hat{B}^0)}\lesssim\|\H\|_{L^2(\hat{B}^{1})}\|\nabla\H\|_{L^2(\hat{B}^{0})}\lesssim \|\H\|_{L^2(\hat{B}^{1})}^2\lesssim X(t)^2.
\end{equation*}
Therefore
\begin{equation*}
\left\|\f{1}{\r}\left(\H\cdot\nabla\H-\f12\nabla|\H|^2\right)\right\|_{L^1(\hat{B}^0)}\lesssim X(t)^2.
\end{equation*}

Similarly, one has
$$\|b\nabla b\|_{L^1(\hat{B}^0)}\lesssim\|b\|_{L^2(\hat{B}^{1})}\|\nabla b\|_{L^2(\hat{B}^{0})}\lesssim\|b\|_{L^2(\hat{B}^{1})}^2\lesssim X(t)^2,$$
and
\begin{equation*}
 \begin{split}
\left\|\left(\f{1}{\rho}-1\right)(\nabla\H_1-\partial_{x_1}\H)\right\|_{L^1(\hat{B}^0)}&\lesssim\|b\|_{L^2(\hat{B}^{1})}\|\nabla \H\|_{L^2(\hat{B}^{0})}\\
&\lesssim\|b\|_{L^2(\hat{B}^{1})}\|\H\|_{L^2(\hat{B}^{1})}\lesssim X(t)^2.
 \end{split}
\end{equation*}

On the other hand, using \eqref{le8}, one has
\begin{equation*}
\begin{split}
 \left\|\left(\f{1}{\rho}-1\right)(\mu\D\u+(\lambda+\mu)\nabla\Dv\u)\right\|_{L^1(\hat{B}^0)}&\lesssim \|b\|_{L^\infty(\hat{B}^{1})}\|\D\u\|_{L^1(\hat{B}^0)}\\
&\lesssim \|b\|_{L^\infty(\hat{B}^0\cap\hat{B}^{1})}\|\u\|_{L^1(\hat{B}^2)}\lesssim X(t)^2.
\end{split}
\end{equation*}

Adding all those estimates above together yields
$$\|\mathcal{L}\|_{L^1(B^0)}\lesssim X(t)^2,$$
which, combing with \eqref{62a}, gives
\begin{equation}\label{62}
\|\mathcal{O}\|_{L^1(\hat{B}^0)}\lesssim X(t)^2. 
\end{equation}

Before we move to the estimate relating to $\mathcal{Q}$, it is better to take a look at the structure of $\mathcal{Q}$ carefully due to the degenerated hyperbolicity. In other words the structure of $(\mathcal{P}\mathcal{Q})_1$
plays an important role in the estimation. Indeed, one easily sees
\begin{subequations}
 \begin{align}
&\mathcal{F}\Big[\mathcal{P}\Big(\nabla(\mathcal{K}+\mathcal{M}_1)-\partial_{x_1}\mathcal{M}\Big)\Big]_1\nonumber\\
&\quad=\f{1}{\sqrt{\xi_2^2+\xi_1^2\left(\f12+r(\xi)\right)^2}}\left(-\xi_2,\f12\xi_1\right)\cdot\mathcal{F}\Big(\nabla(\mathcal{K}+\mathcal{M}_1)-\partial_{x_1}\mathcal{M}\Big)\nonumber\\
&\qquad+\f{1}{\sqrt{\xi_2^2+\xi_1^2\left(\f12+r(\xi)\right)^2}}\xi_1r(\xi)\mathcal{F}\Big(\partial_{x_2}(\mathcal{K}+\mathcal{M}_1)-\partial_{x_1}\mathcal{M}_2\Big)\nonumber\\
&\quad=\f{1}{2\sqrt{\xi_2^2+\xi_1^2\left(\f12+r(\xi)\right)^2}}\Big(\xi_2\xi_1\mathcal{F}(\mathcal{M}_1-\mathcal{K})-\xi_1^2\mathcal{F}\mathcal{M}_2\Big)\label{6100a}\\
&\qquad+\f{1}{\sqrt{\xi_2^2+\xi_1^2\left(\f12+r(\xi)\right)^2}}\xi_1r(\xi)\Big(\xi_2\mathcal{F}(\mathcal{K}+\mathcal{M}_1)-\xi_1\mathcal{F}\mathcal{M}_2\Big)\label{6100b}.
 \end{align}
\end{subequations}
Note that since $r(\xi)$ contains a factor $\xi_1^2/|\xi|^2$, the expression \eqref{6100b} is actually less than $|\xi|^{-1}\xi_1^2|\mathcal{F}(\mathcal{K}+\mathcal{M})|$ and hence is easier to handle than \eqref{6100a}.

\texttt{Estimate of $\|\lambda_{-}(D)^{-\f12}\mathcal{Q}_1\|_{L^1(\tilde{B}^{0,1})}$.} We first control the term relating to $\nabla(\mathcal{K}+\mathcal{M}_1)-\partial_{x_1}\mathcal{M}$. Indeed, for the term associated with
\eqref{6100b}, one has, using \eqref{27a} and \eqref{le8}
\begin{equation}\label{6101}
 \begin{split}
\|\textrm{Terms associated with}\quad \eqref{6100b}\|_{L^1(\tilde{B}^{0,1})}&\lesssim \|\lambda(D)^{\f12}\Lambda^{-1}(\mathcal{K}+\mathcal{M})\|_{L^1(\tilde{B}^{0,1})}\\
&\lesssim \|\mathcal{K}+\mathcal{M}\|_{L^1(\hat{B}^{0}\cap\hat{B}^1)}\\
&\lesssim \|\nabla\u\|_{L^1(\hat{B}^1)}\Big(\|b\|_{L^1(\hat{B}^0\cap\hat{B}^1)}+\|\H\|_{L^1(\hat{B}^0\cap\hat{B}^1)}\Big)\\
&\lesssim X(t)^2.
 \end{split}
\end{equation}
Note that
$$\mathcal{M}_1-\mathcal{K}=(b-\H_1)\Dv\u+\H\cdot\nabla\u_1,$$ terms associated with \eqref{6100a} actually contains three cases
\begin{subequations}
 \begin{align}
&\textrm{Terms associated with}\quad \eqref{6100a}\nonumber\\
&\quad=-\f{1}{2\sqrt{\xi_2^2+\xi_1^2\left(\f12+r(\xi)\right)^2}}\xi_1^2\mathcal{F}\mathcal{M}_2\label{6102a}\\
&\qquad+\f{1}{2\sqrt{\xi_2^2+\xi_1^2\left(\f12+r(\xi)\right)^2}}\xi_2\xi_1\mathcal{F}\Big((b-\H_1)\Dv\u+\H_2\partial_{x_2}\u_1\Big)\label{6102b}\\
&\qquad+\f{1}{2\sqrt{\xi_2^2+\xi_1^2\left(\f12+r(\xi)\right)^2}}\xi_2\xi_1\mathcal{F}\Big(\H_1\partial_{x_1}\u_1\Big)\label{6102c}.
 \end{align}
\end{subequations}
One can treat terms associated with \eqref{6102a} similarly as \eqref{6101} to get
\begin{equation}\label{6103}
\|\textrm{Terms associated with}\quad \eqref{6102a}\|_{L^1(\tilde{B}^{0,1})}\lesssim X(t)^2;
\end{equation}
and for terms associated with \eqref{6102b} one has, using Lemma \ref{lee} and Lemma \ref{le}
\begin{equation}\label{6104}
 \begin{split}
&\|\textrm{Terms associated with}\quad \eqref{6102b}\|_{L^1(\tilde{B}^{0,1})}\\
&\quad\lesssim \|(b-\H_1)\Dv\u+\H_2\partial_{x_2}\u_1\|_{L^1(\tilde{B}^{0,1})}\\
&\quad\lesssim \|\nabla\u\|_{L^1(\hat{B}^1)}\Big(\|b-\H_1\|_{L^\infty(\tilde{B}^{0,1})}+\|\H_2\|_{L^\infty(\tilde{B}^{0,1})}\Big)\\
&\quad\lesssim X(t)^2.
 \end{split}
\end{equation}
For terms associated with \eqref{6102c}, one has, using \eqref{27a} and \eqref{27b}
\begin{equation}\label{6105}
\begin{split}
&\|\textrm{Terms associated with}\quad \eqref{6102c}\|_{L^1(\tilde{B}^{0,1})}\\
&\quad\lesssim \|\H_1\partial_{x_1}\u_1\|_{L^1(\tilde{B}^{0,1})}\\
&\quad\approx \|\H_1\partial_{x_1}\u_1\|_{L^1(\hat{B}^{0})}+\|\H_1\partial_{x_1}\u_1\|_{L^1(\check{B}^{1})}\\
&\quad\lesssim \|\H\|_{L^\infty(\hat{B}^0)}\|\nabla\u\|_{L^1(\hat{B}^1)}+\|\H\|_{L^\infty(\hat{B}^1)}\|\partial_{x_1}\u_1\|_{L^1(\check{B}^1)}\\
&\quad\lesssim \|\H\|_{L^\infty(\hat{B}^0\cap\hat{B}^1)}\|\u\|_{L^1(\hat{B}^2)}\lesssim \|\H\|_{L^\infty(\tilde{B}^{0,1})}\|\u\|_{L^1(\hat{B}^2)}\lesssim X(t)^2.
\end{split}
\end{equation}
Summaring \eqref{6101}-\eqref{6105}, one has
\begin{equation}\label{6106}
\left\|\lambda_{-}(D)^{-\f12}\Big[\mathcal{P}\Big[\nabla(\mathcal{K}+\mathcal{M}_1)-\partial_{x_1}\mathcal{M}\Big]\Big]_1\right\|_{L^1(\tilde{B}^{0,1})}\lesssim X(t)^2.
\end{equation}
On the other hand, applying Lemma \ref{cl1}, one obtains
\begin{equation*}
\begin{split}
&\left\|\lambda_{-}(D)^{-\f12}\left[\u\cdot\nabla\mathcal{H}-\mathcal{P}\Big[\nabla\Big(\u\cdot\nabla(b+\H_1)\Big)-\partial_{x_1}(\u\cdot\nabla\H)\Big]\right]_1\right\|_{L^1(\tilde{B}^{0,1})}\\
&\quad\lesssim \|\nabla\u\|_{L^1(\hat{B}^1)}\|\lambda_{-}(D)^{-\f12}\mathcal{H}_1\|_{L^1(\tilde{B}^{0,1})}\lesssim X(t)^2,
\end{split}
\end{equation*}
which, combining with \eqref{6106}, gives
\begin{equation}\label{6107}
 \|\lambda_{-}(D)^{-\f12}\mathcal{Q}_1\|_{L^1(\tilde{B}^{0,1})}\lesssim X(t)^2.
\end{equation}

\texttt{Estimate of $\|\lambda_{+}(D)^{-\f12}\mathcal{Q}_2\|_{L^1(\hat{B}^{0}\cap\hat{B}^1)}$.} This estimate is much easier than the estimate for $\lambda_{-}(D)^{-\f12}\mathcal{Q}_1$ due to the fact 
$\lambda_{+}(\xi)\approx |\xi|^2$. Indeed, since $\lambda_{+}(\xi)^{-\f12}\approx |\xi|^{-1}$ and the operaor $\mathcal{P}$ is bounded in $L^2$, one has
\begin{equation}\label{6108}
\begin{split}
& \left\|\lambda_{+}(D)^{-\f12}\Big[\mathcal{P}\Big[\nabla(\mathcal{K}+\mathcal{M}_1)-\partial_{x_1}\mathcal{M}\Big]\Big]_2\right\|_{L^1(\hat{B}^{0}\cap\hat{B}^1)}\\
&\quad\lesssim \|\mathcal{K}+\mathcal{M}\|_{L^1(\hat{B}^{0}\cap\hat{B}^1)}\\
&\quad\lesssim \|\nabla\u\|_{L^1(\hat{B}^1)}\Big(\|b\|_{L^1(\hat{B}^0\cap\hat{B}^1)}+\|\H\|_{L^1(\hat{B}^0\cap\hat{B}^1)}\Big)\\
&\quad\lesssim X(t)^2.
\end{split}
\end{equation}
Applying Lemma \ref{cl1} again, one has
\begin{equation*}
 \begin{split}
&\left\|\lambda_{+}(D)^{-\f12}\left[\u\cdot\nabla\mathcal{H}-\mathcal{P}\Big[\nabla\Big(\u\cdot\nabla(b+\H_1)\Big)-\partial_{x_1}(\u\cdot\nabla\H)\Big]\right]_2\right\|_{L^1(\hat{B}^{0}\cap\hat{B}^1)}\\
&\quad\lesssim \|\nabla\u\|_{L^1(\hat{B}^1)}\|\lambda_{+}(D)^{-\f12}\mathcal{H}_2\|_{L^1(\hat{B}^{0}\cap\hat{B}^1)}\lesssim X(t)^2,  
 \end{split}
\end{equation*}
which, combining together with \eqref{6108}, gives
\begin{equation}\label{64}
 \begin{split}
\|\lambda_{+}(D)^{-\f12}\mathcal{Q}_2\|_{L^1(\hat{B}^{0}\cap\hat{B}^1)}\lesssim X(t)^2.
 \end{split}
\end{equation}

Summaring \eqref{61}, \eqref{62}, \eqref{6107} and \eqref{64} together yields the desired estimate.
\end{proof}

\bigskip\bigskip
%%%%%%%%%%%%%%%%%%%%%%%%%%%%%%%%%%%%%%%%%%%%%%%%%%%%%%%%%%%%%%%%%%%%%%%%%%%%%%%%%%%%%%%%%%%%%%%%%%%%%%%%%%%%%%%%%%%%%%%%%%%%

\section{Appendix: Proofs of Product Laws}

This appendix is devoted to the proof of Proposition \ref{p2} and Lemma \ref{cl}. The ideas to prove them are not new (see for example \cite{RD}), but the proof requires paradifferential calculus. 
The isotropic para-differential decomposition of Bony form in $\R^2$ can be stated as follows: let $f,g\in\mathcal{S}'(\R^2)$,
$$fg=T(f,g)+\bar{T}(f,g)+R(f,g)$$ with $\bar{T}(f,g)=T(g,f)$ and
$$T(f,g)\overset{def}=\sum_{j\in\mathbb{Z}}S_{j-1}f\D_jg,\quad R(f,g)\overset{def}=\sum_{j\in\mathbb{Z}}\D_jf\tilde{\D}_jg,\quad \tilde{\D}_jg\overset{def}=\sum_{l=j-1}^{j+1}\D_lg.$$
We use $T^i $, $\bar{T}^i$ and $R^i$ to denote the isotropic para-differential decomposition of Bony form for $\R$ in the direction of $x_i$ for $i=1,2$ respectively.

We first give the proof of Proposition \ref{p2}.
\begin{proof}[Proof of Proposition \ref{p2}]
By Bony's decomposition, one has
\begin{equation}\label{71}
 fg=\Big(TT^1+T\bar{T}^1+TR^1+\bar{T}T^1+\bar{T}\bar{T}^1+\bar{T}R^1+RT^1+R\bar{T}^1+RR^1\Big)(f,g).
\end{equation}
We focus on estimates for typical terms such as $TR^1$ and $RR^1$. Other terms can be estimated similarly.

\texttt{Estimate of $TR^1$.} Since
\begin{equation*}
 \begin{split}
 \|S_{q'-1}\D_{k'}^1f\|_{L^\infty}\lesssim \sum_{p\le q'-2}2^p\|\D_{p}\D_{k'}^1f\|_{L^2}\lesssim 2^{q'(1-s)}\|f\|_{\hat{B}^s}.
 \end{split}
\end{equation*}
Since $\mathcal{F}(\D_{q'} f\tilde{\D}_{q'}g)$ is contained in $\beta\{|\xi|\le 2^{q'}\}$ for some $0<\beta$, the inequality above entails
\begin{equation*}
 \begin{split}
\|\D_q\D_k^1(TR^1(f,g))\|_{L^2}&\lesssim \sum_{\substack{|q'-q|\le 3\\k'\ge k-2}}\|S_{q'-1}\D_{k'}^1f\|_{L^\infty}\|\D_{q'}\tilde{\D}_{k'}^1g\|_{L^2}\\
&\lesssim \|f\|_{\hat{B}^s}\sum_{\substack{|q'-q|\le 3\\k'\ge k-2}}2^{q'(1-s)}\|\D_{q'}\tilde{\D}_{k'}^1g\|_{L^2},
 \end{split}
\end{equation*}
and hence combining Holder and convolution inequalities for series gives
$$\|TR^1(f,g)\|_{\hat{B}^{s+t-1}}\lesssim \|f\|_{\hat{B}^s}\|g\|_{\hat{B}^t}.$$

\texttt{Estimate of $RR^1$.}
\begin{equation*}
 \begin{split}
 \|\D_q\D_k^1(RR^1(f,g))\|_{L^2}&\lesssim \sum_{\substack{q'\ge q-2\\k'\ge k-2}}\|\D_{q'}\D_{k'}^1f\|_{L^\infty}\|\tilde{\D}_{q'}\tilde{\D}_{k'}^1g\|_{L^2}\\
&\lesssim 2^q\sum_{\substack{q'\ge q-2\\k'\ge k-2}}\|\D_{q'}\D_{k'}^1f\|_{L^2}\|\tilde{\D}_{q'}\tilde{\D}_{k'}^1g\|_{L^2}.
 \end{split}
\end{equation*}
Thus we have
\begin{equation*}
 \begin{split}
2^{q(s+t-1)}\|\D_q\D_k^1(RR^1(f,g))\|_{L^2}\lesssim \sum_{\substack{q'\ge q-2\\k'\ge k-2}}2^{(q-q')(s+t)}\alpha_{q',k'}\|f\|_{\hat{B}^s}\|g\|_{\hat{B}^t},
 \end{split}
\end{equation*}
and the convolution inequalities for series over $q$ yields
$$\|RR^1(f,g)\|_{\hat{B}^{s+t-1}}\lesssim \|f\|_{\hat{B}^s}\|g\|_{\hat{B}^t}$$
since $s+t>0.$
\end{proof}

Next we turn to the proof of Lemma \ref{cl}. The key idea is to apply the integration by parts to convert the derivative on $f$ or $g$ to the derivative on $\u$.
\begin{proof}[Proof of Lemma \ref{cl}]
To prove \eqref{25b} and \eqref{25}, one can use \eqref{71} to decompose the product $e\cdot\nabla f$ into nine pieces; and then estimate term by term. For illustration, let us consider
\begin{equation}\label{72}
 \begin{split}
&\sum_{\substack{|q'-q|\le 3\\k'\ge k-2}}\Big(G(D)\D_q\D_k^1(S_{q'-1}\D_{k'}^1e_j\D_{q'}\tilde{\D}_{k'}^1\partial_{x_j} f)|G(D)\D_q\D_k^1 f\Big),
 \end{split}
\end{equation}
and the worst term above is 
\begin{equation}\label{73}
\Big(S_{q-1}\D_k^1e_jG(D)\D_{q}\D_{k}^1\partial_{x_j} f|G(D)\D_q\D_k^1 f\Big)
\end{equation}
since the difference between \eqref{72} and \eqref{73} can be estimate with the aid of the first order Taylor's formula. 

For \eqref{73}, integration by parts gives
\begin{equation}\label{75}
 \begin{split}
&\left|\Big(S_{q-1}\D_k^1e_jG(D)\D_{q}\D_{k}^1\partial_{x_j} f|G(D)\D_q\D_k^1 f\Big)\right|\\
&\quad=\left|\Big(S_{q-1}\D_k^1\Dv e G(D)\D_{q}\D_{k}^1 f|G(D)\D_q\D_k^1 f\Big)\right|\\
&\quad\lesssim \|S_{q-1}\D_k^1\Dv e\|_{L^\infty} \|G(D)\D_q\D_k^1 f\|_{L^2}^2\\
&\quad\lesssim
\begin{cases}
\alpha_{q,k}2^{qm+nk}\|e\|_{\hat{B}^2}\|f\|_{\hat{B}^0}\|G(D)\D_q\D_k^1g\|_{L^2}\\
\alpha_{q,k}2^{qm+nk}\min\{2^{-1}, 2^{k-2q}\}\|e\|_{\hat{B}^2}\|f\|_{\tilde{B}^{0,1}}\|G(D)\D_q\D_k^1g\|_{L^2}.
\end{cases}
 \end{split}
\end{equation}

To prove \eqref{26}, we again focus on the piece
\begin{equation*}
 \begin{split}
&\sum_{\substack{|q'-q|\le 3\\k'\ge k-2}}\Big[\Big(G(D)\D_q\D_k^1(S_{q'-1}\D_{k'}^1e_j\D_{q'}\tilde{\D}_{k'}^1\partial_{x_j} f)|\D_q\D_k^1 g\Big)\\
&\quad+\Big(\D_q\D_k^1(S_{q'-1}\D_{k'}^1e_j\D_{q'}\tilde{\D}_{k'}^1\partial_{x_j} g)|G(D)\D_q\D_k^1 f\Big)\Big],
 \end{split}
\end{equation*}
and the worst term above is
\begin{equation*}
 \begin{split}
&\Big(S_{q-1}\D_{k}^1e_jG(D)\D_{q}\D_{k}^1\partial_{x_j} f|\D_q\D_k^1 g\Big)\\
&\quad+\Big(S_{q-1}\D_{k}^1e_j\D_{q}\D_{k}^1\partial_{x_j} g|G(D)\D_q\D_k^1 f\Big),
 \end{split}
\end{equation*}
which equals, using integration by parts
\begin{equation}\label{76}
 \begin{split}
-\Big(S_{q-1}\D_{k}^1\Dv eG(D)\hat{\D}_{q,q_1,q_2} f|\hat{\D}_{q,q_1,q_2} g\Big).
 \end{split}
\end{equation}
Similarly as \eqref{75}, we can estimate \eqref{76} to obtain \eqref{26}.

\end{proof}

Finnaly we turn to the proof of the commutator estimate, Lemma \ref{cl1}.
\begin{proof}[Proof of Lemma \ref{cl1}]
Denote
$$[\u\cdot\nabla, \chi(D)]f=\u\cdot\nabla(\chi(D)f)-\chi(D)(\u\cdot\nabla f).$$
Then estimates \eqref{27a} and \eqref{27b} easily follows from the estimate
\begin{equation}\label{27e}
 \begin{split}
\left|\mathcal{F}\Big([\u\cdot\nabla, \chi(D)]f\Big)(\xi)\right|\lesssim \int_{\R^2}\int_0^1|\eta_i\partial_j\chi(t\xi+(1-t)\eta)||\mathcal{F}(\nabla \u)(\xi-t\eta)||\mathcal{F}(f)(\eta)|dtd\eta.
 \end{split}
\end{equation}
The inequality above follows directly from the the first order Taylor's formula
\begin{equation*}
 \begin{split}
\mathcal{F}\Big([\u\cdot\nabla, \chi(D)]f\Big)(\xi)&=\int_{\R^2}\int_0^1\mathcal{F}(\u_i)(\xi-\eta)\mathcal{F}(f)(\eta)\eta_i\nabla\chi(t\xi+(1-t)\eta)\cdot(\xi-\eta) dt d\eta\\
&=\int_{\R^2}\int_0^1\mathcal{F}(\partial_j\u_i)(\xi-\eta)\mathcal{F}(f)(\eta)\eta_i\partial_j\chi(t\xi+(1-t)\eta) dt d\eta.  
 \end{split}
\end{equation*}
\end{proof}

\bigskip

%%%%%%%%%%%%%%%%%%%%%%%%%%%%%%%%%%%%%%%%%%%%%%%

\end{document}